\newtheorem{theorem}{Theorem}[section]
\newtheorem{corollary}{Corollary}[section]
\newtheorem{lemma}{Lemma}[section]
\newtheorem{proposition}{Proposition}[section]
\newtheorem{definition*}{Definition}[section]
\newtheorem{example*}{Example}[section]
\newtheorem{remark*}{Remark}[section]
\newenvironment{definition}{\begin{definition*}\rm}{\end{definition*}}
\newenvironment{example}{\begin{example*}\rm}{\end{example*}}
\newenvironment{remark}{\begin{remark*}\rm}{\end{remark*}}
\newcommand{\bd}{\mathbf d}
\renewcommand{\phi}{\varphi}
\font\tengoth=eufm10 at 10pt
\font\sevengoth=eufm7 at 6pt
\newcommand{\fS}{{\mathfrak S}}
\newcommand{\g}{{\mathfrak g}}
\newcommand{\fh}{{\mathfrak h}}
\newcommand{\1}{\mathbf{1}}
\newcommand{\cA}{\mathcal{A}}
\newcommand{\cB}{\mathcal{B}}
\newcommand{\cC}{\mathcal{C}}
\newcommand{\cD}{\mathcal{D}}
\newcommand{\cH}{\mathcal{H}}
\newcommand{\cK}{\mathcal{K}}
\newcommand{\cL}{\mathcal{L}}
\newcommand{\cS}{\mathcal{S}}
\newcommand{\dd}{{\tt d}}
\newcommand{\subeq}{\subseteq}
\newcommand{\into}{\hookrightarrow}
\newcommand{\eps}{\varepsilon}
\newcommand{\N}{{\mathbb N}}
\newcommand{\Z}{{\mathbb Z}}
\newcommand{\R}{{\mathbb R}}
\newcommand{\C}{{\mathbb C}}
\newcommand{\K}{{\mathbb K}}
\newcommand{\T}{{\mathbb T}}
\renewcommand{\hat}{\widehat}
\renewcommand{\tilde}{\widetilde}
\renewcommand{\L}{\mathop{\bf L{}}\nolimits}
\newcommand{\GL}{\mathop{{\rm GL}}\nolimits}
\newcommand{\Gr}{\mathop{{\rm Gr}}\nolimits}
\newcommand{\U}{\mathop{\rm U{}}\nolimits}
\newcommand{\ad}{\mathop{{\rm ad}}\nolimits}
\newcommand{\Ad}{\mathop{{\rm Ad}}\nolimits}
\renewcommand{\Im}{\mathop{{\rm Im}}\nolimits}
\newcommand{\Hom}{\mathop{{\rm Hom}}\nolimits}
\newcommand{\Rep}{\mathop{{\rm Rep}}\nolimits}
\newcommand{\Heis}{\mathop{{\rm Heis}}\nolimits}
\newcommand{\Diff}{\mathop{{\rm Diff}}\nolimits}
\newcommand{\Vir}{\mathop{{\rm Vir}}\nolimits}
\newcommand{\End}{\mathop{{\rm End}}\nolimits}
\renewcommand{\dim}{\mathop{{\rm dim}}\nolimits}
\newcommand{\Rarrow}{\Rightarrow}
\newcommand{\nin}{\noindent} 
\newcommand{\oline}{\overline}
\newcommand{\la}{\langle}
\newcommand{\ra}{\rangle}
\newcommand{\res}{\vert}
\newcommand{\Spec}{{\rm Spec}}
\newcommand{\ssssarr}{\hbox to 15pt{\rightarrowfill}}
\newcommand{\sssarr}{\hbox to 20pt{\rightarrowfill}}
\newcommand{\ssarr}{\hbox to 30pt{\rightarrowfill}}
\newcommand{\sarr}{\hbox to 40pt{\rightarrowfill}}
\newcommand{\arr}{\hbox to 60pt{\rightarrowfill}}
\newcommand{\larr}{\hbox to 60pt{\leftarrowfill}}
\newcommand{\Arr}{\hbox to 80pt{\rightarrowfill}}
\begin{document}

%\markboth{Authors' Names}
%{Instructions for Typesetting Camera-Ready Manuscripts}

%%%%%%%%%%%%%%%%%%%%% Publisher's Area please ignore %%%%%%%%%%%%%%
%\catchline{}{}{}{}{}
%%%%%%%%%%%%%%%%%%%%%%%%%%%%%%%%%%%%%%%%%%%%%%%%%%%%%%%%%%%%%%%%%%%

\title{Smoothing operators and $C^*$-algebras 
for \\  infinite dimensional Lie groups} 
% Short tile: Smoothing operators and $C^*$-algebras 

\author{Karl-Hermann Neeb, Hadi Salmasian, Christoph Zellner}

\AtEndDocument{\bigskip{\footnotesize%
  \textsc{Karl-Hermann Neeb, Department Mathematik, FAU Erlangen-N\"urnberg, Cauerstra\ss e 11, 91058 Erlangen, Deutschland;  } \par  
  \textit{E-mail address}: \texttt{neeb@math.fau.de} \par
  \addvspace{\medskipamount}
  \textsc{Hadi Salmasian, Department of Mathematics and Statistics, University of Ottawa, 585 King Edward Ave, Ottawa, ON K1N 6N5, Canada} \par
  \textit{E-mail address}: \texttt{hsalmasi@uottawa.ca} \par
  \addvspace{\medskipamount}
  \textsc{Christoph Zellner, Department Mathematik, FAU Erlangen-N\"urnberg, Cauerstra\ss e 11, 91058 Erlangen, Deutschland} \par
  \textit{E-mail address}: \texttt{primes@gmx.de}
}}

%\address{}

%\address{
%Department of Mathematics and Statistics, University of Ottawa, \\ 585 King Edward Ave, Ottawa, ON K1N 6N5, Canada,  hsalmasi@uottawa.ca}

%\address{Department Mathematik, FAU Erlangen-N\"urnberg, \\Cauerstra\ss e 11, 91058 Erlangen, Deutschland; primes@gmx.de} 

\maketitle

\begin{abstract}
{A smoothing operator for a 
unitary representation $\pi:G\to \U(\cH)$ of a (possibly infinite dimensional) Lie group $G$ is a bounded operator $A:\cH\to \cH$ whose range is  
contained in the space 
$\cH^\infty$ of smooth vectors of $(\pi,\cH)$. 
Our first main result characterizes   smoothing operators for Fr\'echet--Lie groups 
as those for which the orbit map $\pi^A \colon G \to B(\cH),\ g \mapsto \pi(g)A$ is smooth.
For unitary representations $(\pi,\cH)$ which are semibounded, 
i.e., there exists an element $x_0 \in\g$ such that all operators 
$i\dd\pi(x)$ from the derived representation, for $x $ in a neighborhood of $x_0$,  
are uniformly bounded from above, we show that 
$\cH^\infty$ coincides with the space of 
smooth vectors for the one-parameter group $\pi_{x_0}(t) = \pi(\exp tx_0)$. 
As the main application of our results on smoothing operators, we present a new 
approach to host $C^*$-algebras for infinite dimensional Lie groups, 
i.e., $C^*$-algebras  whose representations are in one-to-one correspondence with certain 
continuous unitary representations of $G$. 
We show that smoothing operators can be 
used to obtain host algebras and that 
the class of semibounded representations can be covered completely 
by host algebras. In particular, the latter class permits direct integral decompositions.}\\

\noindent\textit{Keywords:} host algebra, smooth vector, infinite dimensional Lie group, smoothing operator, multiplier algebra, unitary representation.\\

\noindent\textit{Mathematics Subject Classification 2000:}  22E65, 22E45.
\end{abstract}

%\keywords{host algebra, smooth vector, infinite dimensional Lie group, smoothing operator, multiplier algebra, unitary representation} 

%\ccode{Mathematics Subject Classification 2000:  22E65, 22E45}

%\tableofcontents
\section*{Introduction} \label{sec:0}

If $G$ is a locally compact group, then a Haar measure 
on $G$ leads to the convolution algebra $L^1(G)$, and we obtain 
a $C^*$-algebra $C^*(G)$ as the enveloping $C^*$-algebra of $L^1(G)$. 
This $C^*$-algebra has the universal property that each 
(continuous) unitary representation $(\pi, {\cal H})$ of $G$ on 
a Hilbert space ${\cal H}$ defines a unique non-degenerate 
representation of $C^*(G)$ on ${\cal H}$ and, conversely, each 
non-degenerate representation of $C^*(G)$ arises from a unique 
unitary representation of $G$. This correspondence is a central tool 
in the harmonic analysis on $G$ because the well-developed theory of 
$C^*$-algebras provides a powerful machinery to study the set of all 
irreducible representations of $G$, to endow it with a natural 
topology and to understand how to decompose representations into 
irreducible ones or factor representations. 

For infinite dimensional Lie groups (modeled on locally convex spaces)  
there is no natural 
analog of the convolution algebra $L^1(G)$, so that we cannot hope 
to find a $C^*$-algebra whose representations are in one-to-one 
correspondence with all unitary representations of $G$. However, 
in \cite{Gr05} H.~Grundling introduces the notion of a {\it host algebra}  
of a topological group $G$. This is a pair $({\cal A}, \eta)$, consisting 
of a $C^*$-algebra ${\cal A}$ and a morphism 
$\eta \colon G \to \U(M({\cal A}))$ of $G$ into the unitary group 
of its multiplier algebra $M({\cal A})$ with the following property:  
For each non-degenerate representation $\pi$ of ${\cal A}$ 
and its canonical extension $\tilde\pi$ to $M({\cal A})$, 
the unitary representation $\tilde\pi \circ \eta$ of $G$ is continuous 
and determines $\pi$ uniquely. In this sense, ${\cal A}$ is hosting a 
certain class of representations of $G$. A host algebra $\cA$ is called 
{\it full} if it is hosting all continuous unitary representations of $G$.  
Now it is natural to ask to which extent infinite dimensional 
Lie groups possess host algebras. If $G = (E,+)$ is an infinite dimensional locally convex space, 
then the set of equivalence classes of irreducible unitary representations 
identifies naturally with the dual space $E'$, and since this space carries
no natural locally compact topology, one cannot expect the existence 
of a full host algebra in general. 
Therefore one is looking for host algebras that 
accommodate certain classes of continuous unitary representations. 

For a unitary representation $(\pi, \cH)$ of a 
finite dimensional Lie group $G$ and $f \in C^\infty_c(G)$, 
the operator $\pi(f) = \int_G f(g)\pi(g)\, dg$ has the nice property that 
its range consists of smooth vectors (G\aa{}rding's Theorem) and 
the image of $C^*(G)$ in $B(\cH)$ is generated by these operators. 
We take this as a starting point of our construction of host algebras: 
We call a bounded operator 
$A \in B(\cH)$ {\it smoothing} if its range consists of smooth vectors. 
Our first main result (Theorem~\ref{thm:smoothop}) 
is a characterization of smoothing operators asserting in particular that 
$A$ is a smoothing operator if and only if the map 
\[ \pi^A \colon G \to B(\cH), \quad g \mapsto \pi(g)A\] 
is smooth with respect to the norm topology on $B(\cH)$. That the smoothness 
of $\pi^A$ implies that $A$ is smoothing is trivial, but the converse is a 
powerful tool whose applications we start to explore in the present paper 
(see Section~\ref{sec:6} for further applications). 
It is amazing that nothing but the metrizability of $G$, resp., of its 
Lie algebra $\g$, is needed in Theorem~\ref{thm:smoothop}. 

To use smoothing operators for the construction of host algebras, 
one needs operators that are naturally constructed from the representation. 
For instance, if $i \colon H \to G$ is a smooth homomorphism, where $H$ 
is a finite dimensional Lie group and 
$\pi_H := \pi \circ i$ has the same smooth vectors as $\pi$, then any operator 
$\pi_H(f) = \int_H f(h)\pi(i(h))\, dh$, $f \in C^\infty_c(H)$, is a natural smoothing 
operator and the $C^*$-algebra $\cA \subeq B(\cH)$ 
generated by $\pi(G)\pi_H(C_c^\infty(H))\pi(G)$ 
is a host algebra whose representations correspond to smooth representations 
of $G$ (Theorem~\ref{thm:5.11}). 

In view of this observation, one would like 
to understand such situations systematically.
In Section~\ref{sec:3} we show that semibounded representations lead to 
an abundance of such situations for $H = \R$.  
More precisely, if 
$x_0 \in\g$ has a neighborhood $U$ such that the operators 
$i\dd\pi(x)$, $x \in U$, are uniformly bounded from above, 
then the one-parameter group $\pi_{x_0}(t) := \pi(\exp tx_0)$ has the 
same smooth vectors as $\pi$ (Theorem~\ref{thmsbsmoothonegen}). 
As this theorem is already far from trivial, 
and new, for finite dimensional Lie groups,\begin{footnote}{In \cite{Ne00b} 
it is proved for irreducible semibounded (=highest weight) representations 
of finite dimensional Lie groups.}  
\end{footnote}
it is quite remarkable that it holds for any infinite dimensional 
Lie group~$G$. The key tool in its proof 
is an application of a variation of Nelson's Commutator Theorem 
(\cite{Nel72}, \cite{RS75}), which can be proved by methods of abstract interpolation theory. Nelson's Commutator Theorem  is an interesting complement 
to Nelson's famous theorem that, if $x_1, \ldots, x_n$ is a basis 
of the Lie algebra $\g$ of a finite dimensional Lie group $G$ and 
$(\pi, \cH)$ is a unitary representation, then 
\[ \cH^\infty = \cD^\infty(\Delta) \quad \mbox{ for } \quad 
\Delta := \oline{\sum_{j = 1}^n \dd\pi(x_j)^2}\] 
(Corollary~9.3 in \cite{Nel69}). In this sense the essentially selfadjoint operator  
$i\dd\pi(x_0)$ plays for a semibounded representation a similar role 
as Nelson's Laplacian $\Delta$ for a representation of a finite dimensional 
Lie group. This situation  clearly demonstrates that, 
although one has very general 
tools that work for all representations of finite dimensional 
Lie groups, different classes of representations of infinite dimensional groups 
require specific but nevertheless equally powerful methods. 

In Section~\ref{sec:4} we show that, if all smooth vectors for 
the unitary representation $\pi_{x_0}$ of $\R$ are smooth for $\pi$, then  
\begin{equation}
  \label{eq:cstara}
\cA := C^*\big(\pi(G)e^{i\oline{\dd\pi}(x_0)}\pi(G)\big) 
\end{equation}
is a host algebra for $G$. From that we derive that, for every subset 
$C \subeq \g'$ in the topological dual space of $\g$ which is weak-$*$ closed, 
convex and $\Ad^*(G)$-invariant and whose support functional 
$s_C(x) := \sup \la C, x \ra$ is bounded on some non-empty open subset of $\g$, 
there exists a natural host algebra 
$\cA_C$ whose representations are precisely those semibounded representations 
of $G$ for which $s_\pi(x) := \sup\Spec(i\oline{\dd\pi}(x)) \leq s_C(x)$ 
holds for every $x \in \g$ (Corollary~\ref{cor:4.9}). Actually, 
these are precisely the $C^*$-algebras in \eqref{eq:cstara} from a different 
perspective. 
This generalizes the corresponding result for the finite dimensional 
case (Section 8 in \cite{Ne08}) 
and for the case where $G$ is a locally convex space 
(Section 7 in \cite{Ne08}). 
The present results on host algebras via smoothing operators 
complement the approach via complex 
semigroups and holomorphic extension of unitary representations 
described in \cite{Ne08}, \cite{Ne00}, \cite{Ze14}, \cite{Ze15} which suffers from the difficulties in constructing 
suitable complex semigroups for infinite dimensional groups. These are 
particularly nasty for the Virasoro group because it has no 
Lie group complexification (cf.~\cite{Ner87}, \cite{Ner90}). 
For positive energy representations of the Virasoro group, an alternative 
approach to the existence of direct integral decompositions 
based on realizations by holomorphic sections has been developed in \cite{NS14}.
We conclude this paper with a brief discussion of criteria for the liminality 
of the constructed host algebras in Section~\ref{sec:5} and 
some remarks on further applications of the present results in 
Section~\ref{sec:6}.

The techniques developed in the present paper have already found several 
applications; some which were quite unexpected. We showcase these applications below.

\subsubsection*{$C^*$-algebras for Lie supergroups} 
For finite dimensional Lie supergroups 
$(G, \g)$, we define in \cite{NS15} a $C^*$-algebra $\cA(G, \g)$ 
whose non-degenerate representations are in one-to-one 
correspondence with unitary representations of $(G,\g)$. However, the methods and arguments of \cite{NS15}
rely heavily on finite dimensionality of $(G, \g)$. 
In a forthcoming article, we are able to go beyond the finite dimensional case by  using smoothing operators and the methods developed in Section \ref{sec:4} for constructing host algebras.
We are able to construct 
universal $C^*$-algebras for certain infinite dimensional 
Lie supergroups such as the central extension of the restricted
 orthosymplectic group and also the 
Neveu--Schwarz and the Ramond supergroup, whose even 
part is the Virasoro group.

\subsubsection*{Automatic regularity of representations} 
In \cite{Ze15b}, 
Theorem 3.4 has been used to obtain criteria for continuous 
representations of a Lie group to be semibounded, hence in 
particular smooth. This applies in particular to positive 
energy representations of so-called oscillator groups 
$G = \Heis(V,\omega) \rtimes_\alpha\R$, where 
$V$ is the space of smooth vectors of a unitary 
one-parameter group $t\mapsto \alpha(t)$ in a complex Hilbert space, and $\omega$ is given by the 
imaginary part of the scalar product. It also applies to 
positive energy representations of double extensions of 
loop groups with compact target group, and the Virasoro group. 

\subsubsection*{Schwartz operators and tracability} 
The concept of smoothing operators has been studied further 
in \cite{DNSZ16}, where we show that, for an operator $S \in B(\cH)$, 
both $S$ and $S^*$ are smoothing if and only if $S$ is 
a {\it Schwartz operator}, which roughly means 
that all operators $\dd\pi(D_1) S \dd\pi(D_2)$, $D_j \in U(\g)$, 
are bounded. In \cite{DNSZ16} we also show that a 
unitary representation $(\pi,\cH)$ of a finite 
dimensional Lie group $G$ is trace class (that is, all operators $\pi(f)$, $f \in C^\infty_c(G)$, are trace class) if and only 
if all smoothing operators are trace class.

\subsubsection*{Host algebras for semibounded representations} 
For double extensions of infinite dimensional loop groups and 
for hermitian Lie groups (corresponding to infinite dimensional 
symmetric Hilbert domains) irreducible semibounded representations 
have been classified in \cite{Ne14} and \cite{Ne12}. The same has been 
achieved for the Virasoro group in \cite{NS14}. 
The construction of host algebras in Section 4 provides natural 
$C^*$-algebras for the semibounded representations of these important 
classes of infinite dimensional Lie groups. 

Not all $C^*$-algebras associated to semibounded representations 
are of type I. In \cite{Ze14} it is  shown that, for certain oscillator groups, 
the semibounded representation theory is not type I, which is inherited 
by the corresponding host algebras. Similar phenomena occur in 
the representation theory of gauge groups \cite{JN15,JN16}, 
where UHF algebras and some of their generalizations, such 
as certain infinite tensor products of the algebra of compact 
operators, arise naturally.

\subsection*{Notation and terminology}

Let $E$ and $F$ be locally convex spaces, $U\subeq E$ open and $f \colon U \to F$ a map. Then the {\it derivative
  of $f$ at $x$ in the direction $h$} is defined as 
\[ \dd f(x)(h) := (\partial_h f)(x) := \frac{d}{dt}\Big|_{t= 0} f(x + t h) 
= \lim_{t \to 0} \frac{1}{t}(f(x+th) -f(x)) \]
whenever it exists. The function $f$ is called {\it differentiable at
  $x$} if $\dd f(x)(h)$ exists for all $h \in E$. 
It is called {\it  continuously differentiable} if it is differentiable at all
points of $U$ and 
$$ \dd f \colon U \times E \to F, \quad (x,h) \mapsto \dd f(x)(h) $$
is a continuous map. Note that this implies that $f$ is continuous and that the maps 
$\dd f(x)$ are linear (cf.\ Theorem~3.2.5 in \cite{Ha82}, 
Lemma~2.2.14 in \cite{GN}). 
The map $f$ is called a {\it $C^k$-map}, $k \in \N \cup \{\infty\}$, 
if it is continuous, the iterated directional derivatives 
$$ \dd^{j}f(x)(h_1,\ldots, h_j)
:= (\partial_{h_j} \cdots \partial_{h_1}f)(x) $$
exist for all integers $1\leq j \leq k$, $x \in U$ and $h_1,\ldots, h_j \in E$, 
and all maps  $\dd^j f \colon U \times E^j \to F$ are continuous. 
As usual, $C^\infty$-maps are called {\it smooth}. 

If $E$ and $F$ are complex locally convex spaces, then $f$ is 
called {\it holomorphic} if it is $C^1$ 
and, for each $x \in U$ the 
map $\dd f(x) \colon E \to F$ is complex linear. 

If $E$ and $F$ are real locally convex spaces, 
then we call a map $f \colon U \to F$, $U \subeq E$ open, 
{\it real analytic} or a $C^\omega$-map, 
if, for each point $x \in U$ there exists an open neighborhood 
$V \subeq E_\C$ and a holomorphic map $f_\C \colon V \to F_\C$ with 
$f_\C\res_{U \cap V} = f\res_{U \cap V}$  (cf.\ \cite{Mil84}). 
Any analytic map is smooth, 
and the corresponding chain rule holds without any condition 
on the underlying spaces, which is the key to the definition of 
analytic manifolds (see \cite{Gl02} for details).

Once the concept of a smooth function 
between open subsets of locally convex spaces is established, it is clear how to define 
a locally convex smooth manifold (cf.\ \cite{Ne06}, \cite{GN}). 
For $r\in\N\cup\{\infty\}$ and $C^r$-manifolds $M$ and $N$, we  write $C^r(M,N)$ 
for the space of $C^r$-maps from $M$ to~$N$.

A {\it (locally convex) Lie group} $G$ is a group equipped with a 
smooth manifold structure modeled on a locally convex space 
for which the group multiplication and the 
inversion are smooth maps. We write $\1$ for the identity element in~$G$. 
Its Lie algebra $\g = \L(G)$ is identified with 
the tangent space $T_\1(G)$. The Lie bracket is obtained by identification with the 
Lie algebra of left invariant vector fields. 
We call $G$ a Banach, resp., a Fr\'echet--Lie group if $\g$ is a Banach, resp., 
a Fr\'echet space. 
Note that the multiplication map of $G$ defines a smooth left action  
$G \times TG \to TG, (g,v) \mapsto g \cdot v$ for which the 
restriction $G \times \g \to TG$ is a diffeomorphism. 
A smooth map $\exp \colon \g \to G$  is called an {\it exponential function} 
if each curve $\gamma_x(t) := \exp(tx)$ is a one-parameter group 
with $\gamma_x'(0)= x$. A Lie group $G$ is said to be 
{\it locally exponential} 
if it has an exponential function which maps an open $0$-neighborhood 
$U$ in $\g$ diffeomorphically onto an 
open subset of $G$. 
%A Lie group $G$ with exponential function 
%is said to have the {\it Trotter property} if, for every $x_1, x_2 \in \g$, 
%\[ \exp(t(x_1 + x_2)) = \lim_{n \to \infty} 
%\Big(\exp\Big(\frac{t}{n}x_1\Big)\exp\Big(\frac{t}{n}x_2\Big)\Big)^n \] 
%holds uniformly on compact subsets of $\R$ (see \cite{NS13} for a discussion of Trotter--%Lie groups). 
If $G$ is a Lie group, then the metrizability of $G$ (as a topological group) 
is equivalent to $G$ being first countable, and this is equivalent to the topology on the 
Lie algebra $\g$ to be defined by a sequence of seminorms. 

Throughout this paper all Lie groups are assumed to have an exponential function. 

\section{Differentiable vectors} 
\label{sec:1}

In this section we refine some of the results in \cite{NS13} concerning 
$C^k$-vectors for continuous representations $\pi \colon G \to \GL(V)$ of a Lie group 
$G$ on a locally convex space~$V$. The main result is Theorem~\ref{thm:2.1} 
which provides a description of the subspace $V^n \subeq V$ of $C^n$-vectors in 
terms of the selfadjoint operators $\oline{\dd\pi}(x)$, $x \in \g$, obtained as infinitesimal 
generators of the one-parameter groups $\pi_x(t) := \pi(\exp tx)$. 

\begin{definition} (a) 
Let $G$ be a Lie group with an exponential function 
and $(\pi, V)$ be a continuous representation on the 
locally convex space $V$. Here continuity means that the corresponding 
action map $G \times V \to V$ is continuous. 
For $v \in V$ we write 
\[ \pi^v \colon G \to V,\quad g \mapsto \pi(g)v \] 
for the orbit map. For $n \in \N_0 \cup \{\infty\}$, we write
\[ V^n := V^n(\pi) := \{ v \in V \colon \pi^v  \in  C^n(G,V)\} \] 
for the subspace of {\it $C^n$-vectors} in $V$. This is a 
$G$-invariant subspace of~$V$.

(b) For every $x \in \g$, we obtain a representation of the additive group $\R$ by 
$\pi_x(t) := \pi(\exp tx)$ and we write 
\[ \oline{\dd\pi}(x) \colon \cD(\oline{\dd\pi}(x)) := V^1(\pi_x) \to V, \quad 
\oline{\dd\pi}(x)v := \frac{d}{dt}\Big|_{t=0} \pi(\exp tx)v \] 
for its {\it infinitesimal generator}. 
Composition of operators defined on subspaces of $V$ is defined  
in the usual way. For $n \in \N_0$, we consider  the subspaces 
\[ \cD^n := \cD^n(\pi) := \bigcap_{x_1, \ldots, x_n \in \g}\cD(\oline{\dd\pi}(x_n)\cdots \oline{\dd\pi}(x_1)) \quad \mbox{ and put } \quad \cD^\infty 
:= \bigcap_{n \in \N} \cD^n.\] For $v \in \cD^n$, we obtain a map 
\[ \omega_v^n \colon \g^n \to V, \quad 
(x_1, \ldots, x_n) \mapsto \oline{\dd\pi}(x_1) \cdots \oline{\dd\pi}(x_n)v.\]
Clearly, $V^n \subeq \cD^n$ and, for every $v \in V^n$, the map $\omega_v^n$ 
is continuous and $n$-linear. Below we shall encounter several 
contexts in which every $v \in \cD^n$ is a $C^n$-vector, at least if we assume that 
the maps $\omega_v^k$ are continuous for $k \leq n$ (cf.~Theorem~\ref{thm:2.1}).  
This equality is crucial if one wants to show that specific elements 
of $V$ are $C^n$-vectors. 
%Below we shall mainly use it for the left multiplication action of $G$ on 
%the Banach space $B(\cH)$ induced by a unitary representation. 

(c) For an operator $A \colon \cD(A) \to V$, we shall use the notation 
\[ \cD^\infty(A) := \bigcap_{n \in \N} \cD(A^n). \] 
If $V$ is a Banach space, we write 
\[ \cD^\omega(A) := \Big\{ v \in\cD^\infty(A)\colon (\exists r > 0)\ 
\sum_{n \geq 0} r^n\frac{\|A^nv\|}{n!} < \infty\Big\}.\] 
\end{definition}

\begin{definition} For a locally convex space $V$ over $\K \in \{\R,\C\}$, we 
denote the topological dual (the space of continuous 
linear functionals $V \to \K$) by $V'$ and endow it with the 
weak-$*$-topology defined by the seminorms 
$\alpha \mapsto |\alpha(v)|$, $v \in V$. Then the natural 
map $V' \into \K^V$ is a topological embedding. 
 
For a representation $(\pi, V)$ of a Lie group on $V$, we obtain a 
natural dual representation $\pi^*(g)\alpha := \alpha \circ \pi(g)^{-1}$ on $V'$. 
In general, this action is not continuous with respect to the weak-$*$-topology, 
but every orbit map $(\pi^*)^\alpha \colon G \to V'$ is continuous, because, 
for every $v \in V$, the map 
\[ \pi^{\alpha,v} \colon G  \to \K, \quad \pi^{\alpha,v}(g) := \alpha(\pi(g)v)  
= (\pi^*)^\alpha(g^{-1})(v)\] 
is continuous. 
We write 
\[ V'_{C^1} := \{ \alpha \in V'\colon (\forall v \in V)\, \pi^{\alpha,v} \in C^1(G,\K) \}\]
for the subspace of $C^1$-vectors for the representation $\pi^*$ on~$V'$.
\end{definition}

\begin{definition} We call a locally convex space $V$ {\it integral complete} 
if, for every continuous function $f \colon [0,1] \to V$, the integral
$\int_0^1 f(t)\, dt$ exists in $V$, i.e., there exists a $w \in V$ such that, 
for every continuous linear functional $\alpha \in V'$ we have 
$\alpha(w) = \int_0^1 \alpha(f(t))\, dt$ 
(cf.\ \cite{Gl12}). 
Sequentially complete spaces are integral complete. 
For a characterization of sequentially
complete spaces in terms of a completeness property, we refer to \cite{We12}.
\end{definition}

\begin{lemma} \label{lem:2.1} 
Assume that the subspace $V'_{C^1} \subeq V'$ of weak-$*$ $C^1$-vectors 
separates the points of $V$ and that $V$ is integral complete. 
Then the following assertions hold: 
\begin{itemize}
\item[\upshape(a)] If $v \in \cD^1$ is such that the map 
$\omega_v \colon \g \to V, x \mapsto \oline{\dd\pi}(x)v$ 
is continuous, then $v \in V^1$. 
\item[\upshape(b)] For $n \in \N$ and $v \in \cD^n$, the map $\omega_v^n$ 
is $n$-linear. 
\end{itemize}
\end{lemma}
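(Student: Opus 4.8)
The plan is to prove (a) first, using the separation and integral-completeness hypotheses to identify $\pi^v$ as a $C^1$-map via a weak criterion, and then to bootstrap (b) from (a) by an inductive argument on $n$, applying (a) to the shifted vectors $\oline{\dd\pi}(x_2)\cdots\oline{\dd\pi}(x_n)v$.

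\medskip
\noindent\textbf{Part (a).}
Fix $v \in \cD^1$ with $\omega_v \colon \g \to V$, $x \mapsto \oline{\dd\pi}(x)v$ continuous. I want to show $\pi^v \colon G \to V$ is $C^1$. The first step is to reduce smoothness of $\pi^v$ at an arbitrary point $g \in G$ to smoothness at $\1$ along one-parameter subgroups: since the left translation action $G \times V \to V$ is continuous and $V^1$ is $G$-invariant, it suffices to control $t \mapsto \pi(\exp tx)v$ near $t = 0$ for each $x$, together with joint continuity of the resulting differential. Concretely, I would show that for each $x \in \g$ the curve $\gamma(t) := \pi(\exp tx)v$ is $C^1$ with $\gamma'(t) = \pi(\exp tx)\oline{\dd\pi}(x)v$: the key identity is
\[
\pi(\exp tx)v - v = \int_0^t \pi(\exp sx)\oline{\dd\pi}(x)v\, ds,
\]
where the right-hand integral exists because $V$ is integral complete and the integrand is continuous (continuity of the action map composed with the fixed vector $\oline{\dd\pi}(x)v$). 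To establish this identity I test against functionals $\alpha \in V'_{C^1}$: for such $\alpha$ the scalar function $\pi^{\alpha,v}$ is $C^1$ on $G$, so $t \mapsto \alpha(\pi(\exp tx)v)$ is $C^1$ with derivative $\alpha(\pi(\exp tx)\oline{\dd\pi}(x)v)$ (here one checks the derivative has this form by differentiating the cocycle relation and using that $\alpha(\oline{\dd\pi}(x)v)$ is the $t=0$ derivative), and then the fundamental theorem of calculus in $\K$ gives the identity after applying $\alpha$; since $V'_{C^1}$ separates points, the vector identity follows. Differentiating the integral identity and invoking continuity of $\omega_v$ (to get joint continuity of $(g,x)\mapsto \pi(g)\oline{\dd\pi}(x)v = \pi(g)\omega_v(x)$) then yields that the full orbit map $\pi^v$ is $C^1$, i.e.\ $v \in V^1$.

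\medskip
\noindent\textbf{Part (b).}
I argue by induction on $n$, the case $n=1$ being trivial (linearity of $\oline{\dd\pi}(x)$ on its domain in each slot is immediate; the content is joint behavior). Suppose the claim holds for $n-1$. Given $v \in \cD^n$, fix $x_2,\dots,x_n \in \g$ and set $w := \oline{\dd\pi}(x_2)\cdots\oline{\dd\pi}(x_n)v$; by definition of $\cD^n$ we have $w \in \cD^1$, and $\omega_v^n(x_1,\dots,x_n) = \oline{\dd\pi}(x_1)w$, which is linear in $x_1$ since it lies in the graph of the linear operator $\oline{\dd\pi}(x_1)$ restricted to $\cD^1$ — more precisely, $\oline{\dd\pi}(\lambda x_1 + x_1')w = \lambda\,\oline{\dd\pi}(x_1)w + \oline{\dd\pi}(x_1')w$ holds on $\cD(\oline{\dd\pi}(x_1))\cap\cD(\oline{\dd\pi}(x_1'))\supseteq \{w\}$, which is a standard fact about infinitesimal generators (differentiate $t\mapsto \pi(\exp t(\lambda x_1 + x_1'))w$ using the product/Trotter structure, or test against $V'_{C^1}$ as in (a)). For linearity in the remaining slots $x_2,\dots,x_n$, one fixes $x_1$ and applies the inductive hypothesis to the operator-composition $\oline{\dd\pi}(x_2)\cdots\oline{\dd\pi}(x_n)v$ as a function of $(x_2,\dots,x_n)$ on $\cD^{n-1}$, noting that $\oline{\dd\pi}(x_1)$ applied afterwards does not destroy linearity because it is itself linear and the vectors involved remain in its domain by membership in $\cD^n$. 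Assembling the slotwise linearity gives $n$-linearity of $\omega_v^n$.

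\medskip
\noindent\textbf{Main obstacle.}
The delicate point is part (a): justifying the vector-valued integral identity and the identification $\gamma'(t) = \pi(\exp tx)\oline{\dd\pi}(x)v$ rigorously, using only that $V'_{C^1}$ separates points (not that $V'$ itself consists of $C^1$-vectors) and only integral completeness (not full completeness). One must be careful that testing against $\alpha \in V'_{C^1}$ genuinely pins down both the integral and its derivative; the separation hypothesis is exactly what makes the weak identity into a strong one, and the integral-completeness hypothesis is exactly what guarantees the candidate antiderivative exists in $V$ in the first place. A secondary technical nuisance is upgrading $C^1$-ness along each one-parameter subgroup to $C^1$-ness of $\pi^v$ as a map on the manifold $G$; here I would invoke the chart around $\1$ given by $\exp$ (or, in the non-locally-exponential case, the standing assumption that $G$ has an exponential function together with the characterization of $C^1$-maps via directional derivatives in charts, using that $G\times\g\to TG$ is a diffeomorphism onto a neighborhood of the zero section) and the already-established joint continuity of $(g,x)\mapsto \pi(g)\omega_v(x)$.
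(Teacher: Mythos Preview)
Your argument for (a) is essentially the paper's: test the integral identity against $\alpha \in V'_{C^1}$, use separation and integral completeness to promote it to a vector-valued identity, then read off the derivative and invoke a lemma (the paper cites Lemma~3.3 in [Ne10b]) to pass from differentiability along curves to $C^1$-ness of $\pi^v$. The paper works with a general smooth curve $\gamma$ and its left logarithmic derivative rather than just $t\mapsto\exp(tx)$, but this is a minor variation.

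Part (b), however, has a genuine gap. The case $n=1$ is \emph{not} trivial: the assertion is that $x\mapsto\oline{\dd\pi}(x)v$ is linear for $v\in\cD^1$, and additivity here is precisely the non-obvious point. Since $\exp(t(x+y))\neq\exp(tx)\exp(ty)$, you cannot conclude $\oline{\dd\pi}(x+y)v=\oline{\dd\pi}(x)v+\oline{\dd\pi}(y)v$ directly from the definitions, and the ``product/Trotter structure'' you invoke is unavailable for general locally convex Lie groups and general continuous representations on locally convex spaces. The only thing that saves you is the option you mention parenthetically: for $\alpha\in V'_{C^1}$ the scalar map $\pi^{\alpha,v}$ is $C^1$ on $G$, so $\alpha(\oline{\dd\pi}(x)v)=\dd\pi^{\alpha,v}(\1)x$ is the value of a \emph{linear} differential, whence linearity of $\omega_v^1$ by separation. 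This is exactly the paper's argument, and it is the substance of (b); your induction for $n>1$ is then fine.

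Note also that your plan to ``apply (a) to the shifted vectors $w=\oline{\dd\pi}(x_2)\cdots\oline{\dd\pi}(x_n)v$'' does not work as stated: (a) requires continuity of $\omega_w$, which is not assumed in (b). What you actually need from the proof of (a) is only the weak identity $\alpha(\oline{\dd\pi}(x)w)=\dd\pi^{\alpha,w}(\1)x$, not the conclusion $w\in V^1$.
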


\begin{proof} (a) Let $\gamma \colon [-\eps,\eps] \to G$ be a smooth curve
with $\gamma(0) = \1$ and $\gamma'(0) = x$, and 
\[
\xi:[-\eps,\eps]\to\g\ ,\ \xi(t) := \gamma(t)^{-1} \cdot \gamma'(t) \] 
be its left logarithmic derivative. 
For $\alpha \in V'_{C^1}$ and $v \in \cD^1$ we now obtain 
\begin{equation}\label{dalphaw2}
\dd\pi^{\alpha,v}(g \cdot x) 
= \frac{d}{dt}\Big|_{t=0} \pi^{\alpha,v}(g\exp tx) 
= \frac{d}{dt} \Big|_{t=0}\alpha(\pi(g) \pi_x(t)v) 
= \alpha(\pi(g)\oline{\dd\pi}(x)v)
\end{equation}
and thus 
\begin{equation}\label{dalphaw}
\frac{d}{dt} \alpha(\pi(\gamma(t))v) 
= \frac{d}{dt} \pi^{\alpha,v}(\gamma(t)) 
= \dd\pi^{\alpha,v}(\gamma(t)\cdot \xi(t)) 
= \alpha\big(\pi(\gamma(t))\oline{\dd\pi}(\xi(t))v\big).
\end{equation}

Put $\beta(t) := \pi(\gamma(t))v$ for $|t| \leq \eps$. 
The curve 
$\eta(t) := \pi(\gamma(t))\oline{\dd\pi}(\xi(t))v = \pi(\gamma(t)) \omega_v(\xi(t))$ 
is continuous because $\omega_v$ is continuous and the 
action of $G$ on $V$  is continuous. 
By \eqref{dalphaw}, for each $\alpha \in V'_{C^1}$, the function 
$\alpha \circ \beta$ is differentiable and 
$(\alpha \circ \beta)'(t) = \alpha(\eta(t))$. Since  $\alpha \circ \eta$ is continuous, 
it follows that 
\[ \alpha\Big(\int_0^t \eta(\tau)\, d\tau\Big) 
=  \int_0^t \alpha(\eta(\tau))\, d\tau 
= \alpha(\beta(t) - \beta(0)).\] 
As $V'_{C^1}$ separates the points of $V$, we obtain 
$\beta(t) = \beta(0) + \int_0^t \eta(\tau)\, d\tau.$ 
Now the continuity of $\eta$ implies that $\beta$ is $C^1$ with 
$\beta'(0)= \eta(0) = \oline{\dd\pi}(x)v$. 
Finally, Lemma~3.3. in \cite{Ne10b} 
shows that $v$ is a $C^1$-vector. 

(b) Let $v \in \cD^1$. For every $\alpha \in V'_{C^1}$, we have 
$\alpha(\oline{\dd\pi}(x)v) = \dd\pi^{\alpha,v}(\1)x,$
which is linear in $x$. As $V'_{C_1}$ separates the points of $V$, 
it follows that $\omega_v^1 \colon \g \to V$ is linear. 
By induction, we now see that the maps $\omega_v^n \colon \g^n \to V$, 
$v \in \cD^n$, are $n$-linear. 
\end{proof}

The following examples show why the assumption of $G$ to be Fr\'echet is crucial for the 
equality $\cD^k = V^k$. 
\begin{example} We consider the unitary representation 
of the Banach--Lie group $G := (L^p([0,1],\R),+)$, $p \in [1,\infty[$,  
on the Hilbert space $\cH = L^2([0,1],\C)$ by $\pi(g)f := e^{ig}f$. 
In Section~10 of \cite{Ne10b} we have seen that this representation 
is continuous with 
\begin{equation}
  \label{eq:dk-ex}
 \cD^{k} = \begin{cases}
\{0\} \text{ for } k > \frac{p}{2} \\
L^\infty([0,1])  \text{ for } k = \frac{p}{2} \\
L^{\frac{2p}{p-2k}}([0,1]) \text{ for } k < \frac{p}{2}. 
\end{cases} 
\end{equation}
In particular, $\cD^1$ is dense for $p \geq 2$. 
As $G$ is a Fr\'echet space, it follows from 
Theorem~6.3 in \cite{NS13} that, for $p \geq 2$, 
we have $\cH^k = \cD^k$ for $k \in \N$. 

Now let $q > p$ and consider the subgroup $H := L^q([0,1],\R)$ of $G$ which is a 
Lie group in its own right with respect to the subspace topology. As this subspace is not 
closed, $H$ is not a Fr\'echet--Lie group. The Lie algebra $\fh$ of $H$ can be identified 
with the subspace $L^q([0,1],\R)$ of $\g = L^p([0,1],\R)$. For this Lie algebra, 
we find with \eqref{eq:dk-ex} 
\[ \cD^{k}(\fh) = \begin{cases}
\{0\} \text{ for } k > \frac{q}{2} \\
L^\infty([0,1])  \text{ for } k = \frac{q}{2} \\
L^{\frac{2q}{q-2k}}([0,1]) \text{ for } k <\frac{q}{2}. 
\end{cases} \]
For $k < q/2$ it follows in particular that 
$\cD^k(\fh)$ is strictly larger than $\cD^k = \cD^k(\g)$. However, the subspace of those elements 
$v \in \cD^k(\fh)$ for which the $n$-linear maps $\omega_v^n \colon \fh^n \to \cH$, $n \leq k$, are 
continuous coincides with the smaller space $\cD^k(\g) = \cH^k$. In particular, $H$ and $G$ 
have the same spaces of $C^k$-vectors. 
\end{example}

The following theorem generalizes the criterion 
for $C^k$-vectors for locally exponential Lie groups 
given in Lemma~3.4 in \cite{Ne10b} to general Lie groups and 
part (ii) generalizes Theorem~6.3 in \cite{NS13} to non-unitary representations. 

\begin{theorem} \label{thm:2.1} 
Let $(\pi, V)$ be a continuous representation of the Lie group $G$ with exponential 
map on the integral complete locally convex space $V$ such that 
the subspace $V'_{C^1}\subeq V'$ of weak-$*$-$C^1$ 
vectors for the adjoint representation on $V'$ separates the points of~$V$. 
Then the following assertions hold: 
\begin{itemize}
\item[\upshape(i)]  For  $k \in \N$, a vector 
$v\in V$ is a $C^k$-vector if and only if $v\in \cD^k$ and the maps
$\omega_v^n$, $n \leq k$, are continuous. In particular, $v\in V^\infty$ 
if and only if $v\in\cD^\infty$ and, for every $n \in \N$, the map 
$\omega_v^n$ is continuous.
\item[\upshape(ii)] If $\g$ is metrizable and Baire, e.g., Fr\'echet, 
and $V$ is metrizable, then 
$\cD^n=V^n$ for every $n\in\N\cup\{\infty\}$, i.e., every element of 
$\cD^n$ is a $C^n$-vector.
\end{itemize}
\end{theorem}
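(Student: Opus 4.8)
The plan is to prove both parts together, with part (i) supplying the local analytic input and part (ii) adding a Baire-category argument to promote separate continuity of the multilinear maps $\omega_v^n$ to automatic continuity. First I would establish (i) by induction on $k$. For $k=1$ the non-trivial direction is that $v \in \cD^1$ with $\omega_v^1$ continuous forces $v \in V^1$, and this is exactly Lemma~\ref{lem:2.1}(a); the converse was already noted (if $v\in V^1$ then $\omega_v^1$ is continuous and linear). For the inductive step, suppose the claim holds for $k-1$ and let $v \in \cD^k$ with $\omega_v^n$ continuous for all $n\le k$. By Lemma~\ref{lem:2.1}(a), $v \in V^1$ and $\oline{\dd\pi}(x)v = \omega_v^1(x)$ for all $x$; I then need to see that $w := \oline{\dd\pi}(x)v = \omega_v^1(x)$ lies in $\cD^{k-1}$ with its own maps $\omega_w^n$ ($n\le k-1$) continuous, uniformly enough in $x$ that the first derivative $\dd\pi^v(g\cdot x) = \pi(g)\omega_v^1(x)$ is itself $C^{k-1}$ in $(g,x)$. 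The key identity is $\omega_{\omega_v^1(x_1)}^{n}(x_2,\dots,x_{n+1}) = \omega_v^{n+1}(x_1,\dots,x_{n+1})$, valid on $\cD^{n+1}$ by the associativity of composition of the operators $\oline{\dd\pi}(x_i)$ on the relevant domains; continuity of $\omega_v^{n+1}$ then gives joint continuity in $x_1$ and $(x_2,\dots,x_{n+1})$, which is what the inductive hypothesis applied to $w=\omega_v^1(x_1)$ consumes. One then feeds this back through the characterization of $C^k$-maps between open subsets of locally convex spaces (the iterated-directional-derivative criterion recalled in the Notation section, together with Lemma~3.3/3.4 of \cite{Ne10b}) to conclude $\pi^v \in C^k(G,V)$. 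The statement about $V^\infty$ is then the evident intersection over all $k$.

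For part (ii), the point is to remove the hypothesis ``$\omega_v^n$ continuous'' from (i) when $\g$ is Baire and metrizable and $V$ is metrizable. By Lemma~\ref{lem:2.1}(b), for $v\in\cD^n$ the map $\omega_v^n\colon\g^n\to V$ is $n$-linear; it is also separately continuous in the sense that, fixing all but one argument, the resulting linear map $\g\to V$ is the restriction of a map of the form $x\mapsto \oline{\dd\pi}(x)w$ on a domain, and more usefully each partial map is continuous because it is given by a genuine derivative $\dd\pi^{\alpha,w}(\1)$ tested against the separating family $V'_{C^1}$ — so $\omega_v^n$ is separately continuous as a map into $V$ equipped with its weak topology, hence (since $V'$ separates points and $V$ is metrizable, so weak and norm Borel structures agree enough for this) separately continuous into $V$. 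The plan is then to invoke the standard automatic-continuity theorem for separately continuous multilinear maps on a product of metrizable Baire spaces into a metrizable space (an $n$-linear version of the classical result that a separately continuous bilinear map on Fréchet spaces is jointly continuous), to conclude that $\omega_v^n$ is jointly continuous for every $v\in\cD^n$ and every $n$. Once that is in hand, (i) applies verbatim to give $\cD^n \subeq V^n$; the reverse inclusion $V^n\subeq\cD^n$ is immediate from the definitions, so $\cD^n = V^n$, and intersecting over $n$ handles $n=\infty$.

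The main obstacle I anticipate is the second paragraph — pinning down exactly the right automatic-continuity statement and verifying its hypotheses. Two things need care: first, separate continuity must be checked into $V$ with its original topology, not merely weakly, which is where integral completeness and the separating property of $V'_{C^1}$ (already used to run Lemma~\ref{lem:2.1}) must be leveraged — one shows each partial map is a pointwise limit of difference quotients $\frac{1}{t}(\pi(\exp tx)w - w)$, which lie in $V$, and identifies the limit; second, the passage from ``separately continuous $n$-linear'' to ``jointly continuous'' on $\g^n$ requires $\g$ (equivalently $\g^{\,n-1}$, which is again metrizable and Baire) to be Baire, exactly the hypothesis imposed, and a possibly delicate induction reducing the $n$-linear case to the bilinear one by grouping variables. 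Everything in part (i) is, by contrast, a bookkeeping induction once the composition identity $\omega_{\omega_v^1(x)}^{n} = \omega_v^{n+1}(x,\cdot)$ is set up correctly on the nested domains $\cD^{n+1}$, and the examples preceding the theorem already signal that the ``$\omega_v^n$ continuous'' hypothesis in (i) is genuinely needed outside the Baire setting, so no sharper argument should be attempted there.
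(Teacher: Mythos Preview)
Your outline for (i) is essentially the paper's induction and is correct; the only slip is the composition identity, which should read
\[
\omega_{\omega_v^1(x)}^{\,n}(y_1,\ldots,y_n) = \omega_v^{\,n+1}(y_1,\ldots,y_n,x),
\]
since $\omega_v^1(x) = \oline{\dd\pi}(x)v$ and the operators $\oline{\dd\pi}(x_j)$ in $\omega_v^{n+1}$ are applied right-to-left. This is harmless for the argument---continuity of $\omega_v^{n+1}$ in all slots still yields continuity of $\omega_w^n$---but worth correcting.

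For (ii) there is a real gap. You assert that each partial map $x \mapsto \omega_v^n(y_1,\ldots,x,\ldots,y_n)$ is of the form $x \mapsto \oline{\dd\pi}(x)w$, but this holds only when $x$ occupies the \emph{first} slot; for $j>1$ the partial is $x \mapsto \oline{\dd\pi}(y_1)\cdots\oline{\dd\pi}(y_{j-1})\oline{\dd\pi}(x)u$, and one cannot push a difference-quotient limit through the (unbounded) prefix $\oline{\dd\pi}(y_1)\cdots\oline{\dd\pi}(y_{j-1})$. The weak-continuity detour does not repair this either: continuity of $x \mapsto \alpha(\oline{\dd\pi}(x)w)$ for $\alpha \in V'_{C^1}$ again only treats the first slot, and in any case weak continuity of a linear map into a merely metrizable locally convex space does not force continuity in the original topology. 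The paper closes the gap by an induction on $n$ that you have not set up: assuming $\omega_w^{\,n-1}$ is \emph{jointly} continuous for every $w \in \cD^{n-1}$, one writes $\omega_v^n$ as the pointwise limit of the jointly continuous maps
\[
F_t(x_1,\ldots,x_n) := \tfrac{1}{t}\bigl(\pi(\exp t x_1)\,\omega_v^{\,n-1}(x_2,\ldots,x_n) - \omega_v^{\,n-1}(x_2,\ldots,x_n)\bigr),
\]
so that \emph{every} partial of $\omega_v^n$ is a pointwise limit of continuous functions $\g \to V$. Osgood's theorem (as in \cite{Bou74}, Ch.~IX, \S5) then produces a point of continuity, linearity upgrades this to continuity everywhere, and finally \cite{BS71} promotes separate to joint continuity. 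The idea you are missing is precisely this bootstrap: joint continuity at level $n-1$ is what buys separate continuity at level $n$.
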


\begin{proof} (i) If $v$ is a $C^k$-vector, then $v\in \cD^k$ 
and the maps $\omega_v^n,n\leq k$, are continuous and $n$-linear 
because they arise as partial derivatives of the orbit map $\pi^v$
(cf.~Remark~3.2 in \cite{Ne10b}). 

Conversely, let $v\in \cD^k$ such that the maps $\omega_v^n,n\leq k$, 
are continuous. From Lemma~\ref{lem:2.1}(b) we know that $\omega_v^n$ is $n$-linear. 
For $k = 1$, Lemma~\ref{lem:2.1}(a) implies that $v$ is a $C^1$-vector. 
Assume that  $k>1$. 
Since $v\in V^1$, the differential of the orbit 
map $\pi^v(g) = \pi(g)v$ is a continuous map 
\[T(\pi^v):T(G)\simeq G\times\g\to V\ ,\ (g,x)\mapsto T(\pi^v)(g,x) = \pi(g)\oline{\dd\pi}(x)v.\]
It remains to prove that $T(\pi^v)$ is $C^{k-1}$. 
As $T(\pi^v)(g,x)=\pi^w(g)$ for 
$w:=\oline{\dd\pi}(x)v\in\cD^{k-1}$, our induction hypothesis implies 
that $w\in V^{k-1}$. Thus $T(\pi^v)$ has directional derivatives of order 
$j\leq k-1$ and they are sums of terms of the form 
$\pi(g)\omega_w^j(x_1,\ldots,x_j)$ 
for $j\leq k-1$. As $w$ is a $C^{k-1}$-vector, all these maps 
are continuous (Remark~3.2(a) in \cite{Ne10b}). We conclude that $T(\pi^v)$ is a 
$C^{k-1}$ map, i.e., $\pi^v$ is~$C^k$. 

(ii) Suppose that $V$ is metrizable and that $\g$ is metrizable and Baire. 
We argue by induction on $n \in \N_0$ that the maps $\omega_v^n$, 
$v \in \cD^n$, are continuous. 
For $n = 0$, the constant map $\omega_v^0 = v$ is continuous. 
Assume $n>0$ and that $\omega_w^{n-1} \colon \g^{n-1} \to V$ 
is a continuous $(n-1)$-linear map for every $w \in \cD^{n-1}$. 

Hence, for $t>0$, the maps $F_t \colon  \g^n \to V$, defined by 
\[ F_t(x_1, x_2,\ldots, x_n) := 
\frac{1}{t}\Big(\pi(\exp (t x_1)) \omega_v^{n-1}(x_2,\ldots, x_n)- 
\omega_v^{n-1}(x_2,\ldots, x_n)\Big) \] 
are continuous and satisfy 
\[ \lim_{ n \to \infty} F_{\frac{1}{n}}(x_1, x_2,\ldots, x_n) 
= \omega_v^n(x_1,\ldots, x_n).\] 
Fix $y_k \in\g$ for $k \not=j$ and consider the linear map 
\[ f \colon \g \to V, \quad f(x) := \omega_v^n(y_1, \ldots, y_{j-1}, x, y_{j+1}, 
\ldots, y_n).\] 
Since $V$ is metrizable and $\g$ is a Baire space, it 
follows from Example~22(a) in Ch.~IX, \S 5 of \cite{Bou74} that 
the set of discontinuity points of $f$ is of the first category, 
hence not all of $\g$. We conclude that there exists a point in which 
$f$ is continuous, so that its continuity follows from linearity. 
As $\g$ is Baire metrizable, the continuity of $\omega_v^n$ 
follows from Corollary~1.6 in \cite{BS71}. 
Now (ii) follows from~(i). 
\end{proof}

The following corollary is a tool to identify the smooth vectors for the 
left multiplication action on $B(\cH)$ defined by a unitary representation of~$G$ 
(cf.\ Theorem~\ref{thm:smoothop} below).

\begin{corollary} \label{cor:2.1} %\label{cor:2.1b} 
Let $(\rho, \cH)$ be a unitary representation of $G$ 
for which the subspace $\cH^1$ of $C^1$-vectors is dense. 
For the following representations we have that, for  $k \in \N$, a vector 
$v\in \cH$ is a $C^k$-vector if and only if $v\in \cD^k$ and the maps
$\omega_v^n$ are continuous for $n\leq k$: 
\begin{itemize}
\item[\upshape(i)] The representation 
of $G$ on $V := \{\, T \in B(\cH) \colon g \mapsto \rho(g)T \ \mbox{ is continuous}\, \}$ 
given by left multiplication $\pi(g)(T) := \rho(g)T$. 
\item[\upshape(ii)] The representation 
$\alpha(g,h)(A) := \rho(g)A\rho(h)^{-1}$ of $G \times G$ on the subspace 
$V \subeq B(\cH)$ of continuous vectors for this 
representation. 
\end{itemize}
\end{corollary}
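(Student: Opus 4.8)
The plan is to derive both statements from Theorem~\ref{thm:2.1}(i) applied to the relevant representation. In both cases $V$ will carry the norm topology inherited from $B(\cH)$, and $(\pi,V)$, resp.\ $(\alpha,V)$, is readily checked to be a continuous representation of $G$, resp.\ $G\times G$ (for the action map $(g,T)\mapsto\rho(g)T$ one estimates $\|\rho(g)T-\rho(g_0)T_0\| \leq \|T-T_0\| + \|(\rho(g)-\rho(g_0))T_0\|$, the last term tending to $0$ because $T_0\in V$; similarly in case (ii)). So the two points to establish are that $V$ is integral complete and that the subspace $V'_{C^1}$ of weak-$*$-$C^1$ vectors for the dual representation separates the points of $V$.

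Integral completeness will be immediate once I observe that $V$ is norm-closed in $B(\cH)$: if $T$ is a norm limit of operators $T_n\in V$, then $g\mapsto\rho(g)T$ is a uniform limit of the norm-continuous maps $g\mapsto\rho(g)T_n$, hence norm-continuous, so $T\in V$; in case (ii) replace $\rho(g)T_n$ by $\rho(g)T_n\rho(h)^{-1}$. Thus $V$ is a Banach space, in particular sequentially complete and therefore integral complete.

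For the separation property in case (i), I would take, for each $v\in\cH$ and $w\in\cH^1$, the norm-continuous functional $\lambda_{v,w}\in V'$ given by $\lambda_{v,w}(A):=\la Av,w\ra$. Using unitarity of $\rho$ one computes, for $A\in V$,
\[ \pi^{\lambda_{v,w},A}(g) = \la\rho(g)Av,w\ra = \la Av,\rho(g)^{-1}w\ra, \]
and since $\cH^1$ is $\rho$-invariant and inversion on $G$ is smooth, $g\mapsto\rho(g)^{-1}w$ is a $C^1$-map $G\to\cH$; composing with the continuous $\R$-linear functional $\xi\mapsto\la Av,\xi\ra$ shows $\pi^{\lambda_{v,w},A}\in C^1(G,\C)$, so that $\lambda_{v,w}\in V'_{C^1}$. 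If $A\in V$ annihilates all these functionals, then $\la Av,w\ra=0$ for all $v\in\cH$ and all $w$ in the dense subspace $\cH^1$, forcing $A=0$; hence $V'_{C^1}$ separates points of $V$, and Theorem~\ref{thm:2.1}(i) yields case (i).

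Case (ii) will go through in the same way, but with $v,w\in\cH^1$: for the $G\times G$-representation one gets
\[ \alpha^{\lambda_{v,w},A}(g,h) = \la\rho(g)A\rho(h)^{-1}v,w\ra = \la A\rho(h)^{-1}v,\rho(g)^{-1}w\ra, \]
which is $C^1$ in $(g,h)$ because $(g,h)\mapsto(\rho(h)^{-1}v,\rho(g)^{-1}w)$ is a $C^1$-map into $\cH\times\cH$ (both $v$ and $w$ being $C^1$-vectors for $\rho$) and the inner product is continuous and $\R$-bilinear; density of $\cH^1$ again gives separation. I expect the only genuinely delicate point to be pinning down that the concrete functionals $\lambda_{v,w}$ are weak-$*$-$C^1$ vectors for the dual action — and noting that the density hypothesis on $\cH^1$ enters exactly there, namely in making this family of functionals large enough to separate points; integral completeness and the final appeal to Theorem~\ref{thm:2.1}(i) are then purely formal.
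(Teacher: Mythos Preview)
Your proposal is correct and follows essentially the same route as the paper: both apply Theorem~\ref{thm:2.1}(i) after observing that the functionals $\lambda_{v,w}(A)=\la Av,w\ra$ with $v,w$ in $\cH^1$ lie in $V'_{C^1}$ and separate points. Your write-up is more explicit than the paper's (you spell out the continuity of the action, the integral completeness of $V$ as a closed subspace of $B(\cH)$, and the $C^1$-verification for the functionals), and in case~(i) you sharpen slightly by noting that only $w\in\cH^1$ is needed, but the underlying idea is identical.
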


\begin{proof} In both cases the linear functionals 
$\alpha_{v,w}(A) := \la Av, w\ra$, where $v,w \in \cH$ are $C^1$-vectors,  
are contained in $B(\cH)_{C^1}'$ and separate the points. 
Hence the assertion follows from Theorem~\ref{thm:2.1}(i). 
\end{proof}

\section{Smoothing operators for unitary representations}
\label{sec:2}

In this section we introduce the core concept of this paper: 
\begin{definition} For a unitary representation $(\pi, \cH)$ of a Lie group $G$ on $\cH$, 
an operator $A \in B(\cH)$ is called a {\it smoothing operator} if 
$A(\cH) \subeq \cH^\infty$. \end{definition}

It will turn out that this class of operators is extremely useful when it comes 
to constructing host algebras for $G$ (Section~\ref{sec:4}). 
The main result of this section is the Characterization Theorem~\ref{thm:smoothop}. 
Its main point is the fact that, if $G$ is metrizable, then  
$A$ is a smoothing operator if and only if the map 
\[ \pi^A \colon G \to B(\cH), \quad g \mapsto \pi(g)A\] 
is smooth with respect to the norm topology on $B(\cH)$. 
We start with a brief discussion of various types of examples.

\subsection{Examples of smoothing operators} 

\begin{example} \label{ex:3.1} Let $(\pi,\cH)$ be a continuous unitary representation of $G$. 

(a) If $G$ is finite dimensional and 
$C^\infty_c(G)$ is the space of test functions on $G$, then, for every $f \in 
C^\infty_c(G)$,
 the operator $\pi(f) = \int_G f(g)\pi(g)\, dg$ is a smoothing operator. 

Since the integrated representation $\pi \colon L^1(G) \to B(\cH)$ is $G$-equivariant, 
we see that, more generally, for every $f \in L^1(G)$, which is a smooth vector for the 
left translation 
action of $G$ on $L^1(G)$, the operator $\pi(f)$ is smoothing. 
Since the left regular representation $\lambda$ of $G$ on $L^1(G)$ is a continuous Banach 
representation, we have $L^1(G)^\infty = \cD^\infty$ (Theorem~1.3 
in \cite{Ne10b}). 
Integrating against test functions further shows that 
\[ \oline{\dd\lambda}(x)f = - L_X f \quad \mbox{ with} \quad 
(L_X f)(g) = \dd f(g)(X.g) \] 
in the sense of distributions.  
From this discussion one can derive that $f \in L^1(G)$ is a smooth vector 
if and only if 
$f \in C^\infty(G)$ with $D * f \in L^1(G)$ for every 
$D \in U(\g)$ (see the proof of Proposition~3.27 in \cite{Mag92}, where a similar 
result for $L^2(G)$ is derived from Sobolev's Lemma). 

(b) Suppose that $x \in \g$ is such that the operator $\oline{\dd\pi}(x)$ has the same smooth 
vectors as $G$, i.e., $\cH^\infty = \cD^\infty(\oline{\dd\pi}(x)) = \cH^\infty(\pi_x)$ for 
$\pi_x(t) = \pi(\exp tx)$. Then (a) implies that, 
for every $f \in C^\infty(\R)$ for which all derivatives $f^{(k)}$, $k \in \N$, are 
integrable, the operator 
\[ \pi_x(f) := \int_\R f(t)\pi(\exp tx)\, dt \] 
is a smoothing operator. This holds in particular for $f \in \cS(\R)$. 
In view of the Spectral Theorem for selfadjoint operators, it even suffices that 
all functions $x \mapsto x^n\hat f(x)$ are bounded, i.e., that $\hat f$ vanishes 
rapidly at infinity. 

(c) Assume that $G$ is finite dimensional and let  
$x_1, \ldots, x_n$ be a basis of its Lie algebra $\g$. 
Then 
\[ \cH^\infty = \cD^\infty(\Delta) \quad \mbox{ for } \quad 
\Delta := \oline{\sum_{j = 1}^n \dd\pi(x_j)^2}\] 
(Corollary~9.3 in \cite{Nel69}) and since $\Delta$ has non-positive spectrum, 
it follows that the contraction semigroup $(e^{t\Delta})_{t > 0}$, which is an 
abstract version of the heat semigroup on $L^2(G)$, consists 
of smoothing operators. 
\end{example}

\begin{example} \label{ex:holext} (Smoothing operators by holomorphic extension) 
Suppose that the Lie group $G$ sits in a complex Lie group $G_\C$ and that 
$S \subeq G_\C$ is an open subsemigroup satisfying $G S \subeq S$. 

We assume that $(\pi, \cH)$ is a unitary representation for which 
there exists a holomorphic representation $\hat\pi \colon S \to B(\cH)$ such that 
\[ \hat\pi(gs) = \pi(g) \hat\pi(s) \quad \mbox{ for }\quad g \in G, s \in S \] 
(see \cite{Ne00} and \cite{Ne08} for a detailed discussion of such situations). 
For every $s \in S$, the map $G \to B(\cH), g \mapsto \pi(g)\hat\pi(s) = \hat\pi(gs)$ 
is smooth because $\hat\pi$ is holomorphic. 
Therefore $\pi(S)$ consists of smoothing operators. 
\end{example}

\begin{proposition} Let $(\pi, \cH)$ be a unitary representation, 
$\iota_H \colon H \to G$ a morphism of Lie groups and $\pi_H := \pi\circ \iota_H$. Assume that 
$H$ is finite dimensional. 
Then the following are equivalent: 
\begin{itemize}
\item[\upshape(i)] $\cH^\infty = \cH^\infty(\pi_H)$. 
\item[\upshape(ii)] All operators $\pi_H(f)$, $f \in C^\infty_c(H)$, are smoothing. 
\end{itemize}
\end{proposition}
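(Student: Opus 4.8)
The plan is to prove the two implications separately. The implication (i) $\Rightarrow$ (ii) is the easy, G\aa{}rding-type direction, whereas (ii) $\Rightarrow$ (i) carries the actual content, and there I would invoke the Dixmier--Malliavin theorem for the finite dimensional group $H$.

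For (i) $\Rightarrow$ (ii) I would argue as follows. Since $H$ is finite dimensional, the operator $\pi_H(f) = \int_H f(h)\pi(\iota_H(h))\,dh$ is well defined for $f \in C^\infty_c(H)$ and $\pi_H \colon L^1(H) \to B(\cH)$ is $H$-equivariant (Example~\ref{ex:3.1}(a)). Hence, for every $v \in \cH$, one has $\pi_H(h)\pi_H(f)v = \pi_H(\lambda_h f)v$, and since $h \mapsto \lambda_h f$ is a smooth curve in $C_c^\infty(H) \subeq L^1(H)$, the orbit map $h \mapsto \pi_H(h)\pi_H(f)v$ is smooth, i.e.\ $\pi_H(f)v \in \cH^\infty(\pi_H)$. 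Assuming (i), $\cH^\infty(\pi_H) = \cH^\infty$, so $\pi_H(f)$ is smoothing; this direction needs essentially no work.

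For (ii) $\Rightarrow$ (i) I would first record the inclusion $\cH^\infty \subeq \cH^\infty(\pi_H)$, which always holds because $\iota_H$ is a morphism of Lie groups, so for $v \in \cH^\infty$ the orbit map $\pi_H^v = \pi^v \circ \iota_H \colon H \to \cH$ is a composition of smooth maps, hence smooth. It therefore remains to prove $\cH^\infty(\pi_H) \subeq \cH^\infty$. Given $v \in \cH^\infty(\pi_H)$, the key step is to write $v$ as a \emph{finite} sum $v = \sum_{j=1}^n \pi_H(f_j)w_j$ with $f_j \in C^\infty_c(H)$ and $w_j \in \cH$; this is exactly the Dixmier--Malliavin theorem applied to the continuous representation $(\pi_H,\cH)$ of the finite dimensional Lie group $H$, which identifies the G\aa{}rding subspace with the full space of smooth vectors of $\pi_H$. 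Once this is in hand, hypothesis (ii) gives $\pi_H(f_j)w_j \in \cH^\infty$ for each $j$, and since $\cH^\infty$ is a linear subspace we conclude $v \in \cH^\infty$, completing the equality.

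The main obstacle is concentrated in the reverse implication: a mere density statement — that the ordinary G\aa{}rding space is dense, which is all the classical G\aa{}rding theorem yields — would not suffice, since a limit of smooth vectors need not be smooth. What makes the argument go through is the \emph{exact} equality ``G\aa{}rding space $=$ smooth vectors'' supplied by Dixmier--Malliavin, which is precisely what lets the hypothesis ``$\pi_H(f)$ is smoothing for all $f$'' propagate $\pi_H$-smoothness to smoothness along all directions of $\g$, not just those in $\fh$. Everything else is routine bookkeeping.
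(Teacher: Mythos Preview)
Your proof is correct and follows essentially the same route as the paper: (i) $\Rightarrow$ (ii) via Example~\ref{ex:3.1}(a), and (ii) $\Rightarrow$ (i) via the Dixmier--Malliavin equality $\cH^\infty(\pi_H) = \pi_H(C^\infty_c(H))\cH$ combined with the trivial inclusion $\cH^\infty \subeq \cH^\infty(\pi_H)$. Your remark that mere density of the G\aa{}rding space would not suffice is exactly the point; the paper uses the same chain $\cH^\infty(\pi_H) = \pi_H(C^\infty_c(H))\cH \subeq \cH^\infty \subeq \cH^\infty(\pi_H)$.
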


\begin{proof} (i) $\Rarrow$ (ii) follows from Example~\ref{ex:3.1}(a). 

(ii) $\Rarrow$ (i):  If all operators $\pi_H(f)$, $f \in C^\infty_c(H)$, 
are smoothing, then the Dixmier--Malliavin Theorem (\cite{DM78}) shows that 
$\cH^\infty(\pi_H) = \pi_H(C^\infty_c(H))\cH \subeq \cH^\infty \subeq \cH^\infty(\pi_H).$
\end{proof}

\begin{corollary} Let $(\pi, \cH)$ be a unitary representation, $x \in \g$ 
and $\pi_x(t) := \pi(\exp tx)$. Then the following are equivalent: 
\begin{itemize}
\item[\upshape(i)] $\cH^\infty = \cD^\infty(\oline{\dd\pi(x)})$. 
\item[\upshape(ii)] All operators $\pi_x(f)$, $f \in C^\infty_c(\R)$, are smoothing. 
\end{itemize}
\end{corollary}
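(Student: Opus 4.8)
The plan is to obtain this as an immediate specialization of the preceding Proposition to the one-dimensional group $H = \R$. First I would set $\iota_H \colon \R \to G$, $t \mapsto \exp(tx)$, which is a morphism of Lie groups because $G$ carries an exponential function by our standing assumption, and observe that $\pi_H := \pi \circ \iota_H$ is exactly the one-parameter unitary group $\pi_x$. Under this identification, condition (ii) of the Proposition --- that all operators $\pi_H(f)$, $f \in C^\infty_c(H)$, are smoothing --- is verbatim condition (ii) of the corollary, since $C^\infty_c(H) = C^\infty_c(\R)$.

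The one step that needs justification is that condition (i) of the Proposition, $\cH^\infty = \cH^\infty(\pi_H)$, is the same as condition (i) of the corollary, i.e.\ that $\cH^\infty(\pi_x) = \cD^\infty(\oline{\dd\pi}(x))$. I would deduce this from Theorem~\ref{thm:2.1}(ii) applied to the representation $\pi_x$ of $\R$: the group $\R$ is finite dimensional, hence Fr\'echet (so metrizable and Baire), $\cH$ is metrizable, and the $C^1$-vectors of $\pi_x$ are dense in $\cH$ (by Stone's theorem together with mollification against functions in $C^\infty_c(\R)$), so the matrix coefficients $\alpha_{v,w}$ with $v,w \in \cH^1(\pi_x)$ lie in the space $\cH'_{C^1}$ of weak-$*$ $C^1$-vectors and separate the points of $\cH$. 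Theorem~\ref{thm:2.1}(ii) then gives $\cH^n(\pi_x) = \cD^n(\pi_x) = \cD(\oline{\dd\pi}(x)^n)$ for every $n \in \N$, and intersecting over all $n$ yields $\cH^\infty(\pi_x) = \cD^\infty(\oline{\dd\pi}(x))$.

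With both identifications in place, the equivalence (i) $\Leftrightarrow$ (ii) of the corollary is literally the equivalence (i) $\Leftrightarrow$ (ii) of the Proposition, and nothing further is required. I do not anticipate any real obstacle here: the content is entirely in the Proposition and in the classical identification of the $C^\infty$-vectors of a strongly continuous one-parameter unitary group with the smooth domain of its infinitesimal generator. Should one wish to bypass Theorem~\ref{thm:2.1}, that identification also admits a direct proof via mollifiers $\pi_x(f)v$ with $f$ an approximate identity in $C^\infty_c(\R)$, which already shows $\cH^\infty(\pi_x)$ is dense and, combined with the Spectral Theorem for $\oline{\dd\pi}(x)$, gives the equality of the two domains.
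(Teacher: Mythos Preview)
Your proposal is correct and follows exactly the approach the paper intends: the Corollary is stated without proof because it is the specialization of the preceding Proposition to $H = \R$ via $\iota_H(t) = \exp(tx)$. Your only addition is the explicit justification of $\cH^\infty(\pi_x) = \cD^\infty(\oline{\dd\pi}(x))$ through Theorem~\ref{thm:2.1}(ii), which is correct (and more than the paper bothers to say, since this identity is classical for strongly continuous one-parameter unitary groups).
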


\begin{proposition} Let $(\pi, \cH)$ be a unitary representation, $x \in \g$ and 
$\pi_x(t) := \pi(\exp tx) = e^{t\oline{\dd\pi}(x)}$. Suppose that 
$i\oline{\dd\pi}(x)$ is bounded from above. 
Then all operators $(e^{it \oline{\dd\pi}(x)})_{t> 0}$ are smoothing if and only if 
$\cD^\omega(\oline{\dd\pi}(x)) \subeq \cH^\infty$. 
\end{proposition}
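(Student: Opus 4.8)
The plan is to reduce the statement to a purely spectral fact about a single self-adjoint operator. Put $D := i\oline{\dd\pi}(x)$; this operator is self-adjoint and, by hypothesis, bounded from above, so $c := \sup\Spec(D) < \infty$. Since $it\oline{\dd\pi}(x) = tD$, the functional calculus gives $e^{it\oline{\dd\pi}(x)} = e^{tD}$, which for $t > 0$ is a bounded self-adjoint operator with $\|e^{tD}\| = e^{tc}$, and $\cD^\omega(\oline{\dd\pi}(x)) = \cD^\omega(D)$ because $\|\oline{\dd\pi}(x)^n v\| = \|D^n v\|$ for all $n$ and $v$. Thus the proposition amounts to the equivalence: $e^{tD}\cH \subeq \cH^\infty$ for every $t > 0$ if and only if $\cD^\omega(D) \subeq \cH^\infty$.

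The heart of the argument is the identity
\[ \cD^\omega(D) = \bigcup_{t > 0} e^{tD}\cH, \]
which I would establish directly from the spectral theorem, writing $\mu_v$ for the scalar spectral measure of $D$ associated with $v$ (a finite measure supported in $(-\infty, c]$). To prove $\bigcup_{t>0} e^{tD}\cH \subeq \cD^\omega(D)$: if $v = e^{tD}w$, then $\|D^n v\|^2 = \int \lambda^{2n} e^{2t\lambda}\, d\mu_w \leq \|w\|^2 \big(\sup_{\lambda \leq c} |\lambda|^n e^{t\lambda}\big)^2$, and one bounds this supremum by $a^n e^{ta} + n!\, t^{-n}$ with $a := \max(c, 1)$, using $\sup_{u \geq 0} u^n e^{-tu} = (n/et)^n \leq n!\, t^{-n}$; summing the series then shows $\sum_n r^n \|D^n v\|/n! < \infty$ for $0 < r < t$, so that $v \in \cD^\omega(D)$.

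For the converse inclusion $\cD^\omega(D) \subeq \bigcup_{t>0} e^{tD}\cH$: given $v \in \cD^\omega(D)$, pick $M, r > 0$ with $\|D^n v\| \leq M\, n!\, r^{-n}$ for all $n$. Then the moments $m_k := \int |\lambda|^k\, d\mu_v$ satisfy $m_{2n} = \|D^n v\|^2 \leq M^2 (n!)^2 r^{-2n}$ and, by Cauchy--Schwarz, $m_{2n+1} \leq m_{2n}^{1/2} m_{2n+2}^{1/2}$, whence $m_k \leq M^2\, k!\, r^{-k}$ for every $k$ (using $(\lceil k/2 \rceil !)(\lfloor k/2 \rfloor !) \leq k!$). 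Expanding $e^{-2s\lambda}$ as a power series and applying Tonelli gives $\int_{(-\infty,0]} e^{-2s\lambda}\, d\mu_v \leq M^2 \sum_k (2s/r)^k < \infty$ for $0 < s < r/2$, while $\int_{(0,c]} e^{-2s\lambda}\, d\mu_v \leq \|v\|^2$; hence $v \in \cD(e^{-sD})$ and $v = e^{sD}(e^{-sD}v) \in e^{sD}\cH$. With the identity in hand, both directions of the proposition are immediate: if $\cD^\omega(D) \subeq \cH^\infty$ then $e^{tD}\cH \subeq \bigcup_{s>0} e^{sD}\cH = \cD^\omega(D) \subeq \cH^\infty$ for each $t > 0$; and if every $e^{tD}$ with $t > 0$ is smoothing then $\cD^\omega(\oline{\dd\pi}(x)) = \cD^\omega(D) = \bigcup_{t>0} e^{tD}\cH \subeq \cH^\infty$.

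The only point I expect to require care is matching the factorial growth rates in the two inclusions of $\cD^\omega(D) = \bigcup_{t>0} e^{tD}\cH$, i.e.\ making the radii of convergence line up; everything else is bookkeeping with the functional calculus. Alternatively, this identity — that the analytic vectors of a self-adjoint operator bounded from above coincide with the union of the ranges of the associated holomorphic contraction semigroup — is a standard fact and could simply be quoted, after which the proposition follows in one line.
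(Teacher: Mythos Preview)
Your proof is correct and takes exactly the same approach as the paper: both reduce the proposition to the identity $\cD^\omega(\oline{\dd\pi}(x)) = \bigcup_{t>0} e^{it\oline{\dd\pi}(x)}\cH$, after which the equivalence is immediate. The paper's proof is in fact a single sentence that simply invokes this identity as known, whereas you supply a detailed spectral-theoretic verification; your closing remark that one could ``simply quote'' the identity is precisely what the paper does.
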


\begin{proof} This follows from the fact that the 
subspace $\bigcup_{t > 0} e^{it \oline{\dd\pi}(x)}\cH$ coincides with 
the space $\cD^\omega(\oline{\dd\pi}(x))$ of analytic vectors. 
\end{proof}

\begin{example} Let $H$ be a finite dimensional Lie group and $M$ a homogeneous 
space of $H$. Then $M$ carries a smooth nowhere vanishing $1$-density which leads to a  
$\Diff(M)$-quasiinvariant measure $\mu$ on $M$. 
We assume that $M$ is compact, so that $\Diff(M)$ is a Fr\'echet Lie group.
We thus obtain a 
unitary representation $(\pi, \cH)$ of $G := \Diff(M)$ on $\cH := L^2(M,\mu)$. 
By the Dixmier--Malliavin Theorem (\cite{DM78}), the subspace of smooth vectors 
for $\pi_H$ is generated by the images of the operators 
$\pi_H(f)$, $f\in C^\infty_c(H)$, and since $H$ acts transitively on $M$, 
this implies that $\cH^\infty(\pi_H) \subeq C^\infty(M)$ which easily 
implies that $C^\infty(M) = \cH^\infty = \cH^\infty(\pi_H)$. 
\end{example}

\subsection{A characterization of smoothing operators} 

\begin{lemma} \label{lem:prod} 
Let $\cH$ be a complex Hilbert space, $A \in B(\cH)$ and $B \colon \cD(B) \to \cH$ 
be a densely defined operator on $\cH$. Then the following assertions hold: 
\begin{itemize}
\item[\upshape(a)] The densely defined operator $AB \colon \cD(B) \to \cH$ is bounded if and only if 
$A^*(\cH) \subeq \cD(B^*)$. If this is the case, then $(AB)^* = B^*A^*$. 
\item[\upshape(b)] If $B^*$ is also densely defined, then $A^*B^*$ is bounded if and only 
if $A(\cH) \subeq \cD(\oline B)$. If this is the case, then $\oline BA$ is also bounded. 
\end{itemize}
\end{lemma}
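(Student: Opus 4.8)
The plan is to reduce everything to the closed graph theorem and the standard duality $(\cdot)^*$ between bounded operators, using the fact that a densely defined operator $T$ is bounded iff $\cD(T^*)=\cH$, together with the elementary inclusion $T\subseteq (T^*)^*$ whenever $T^*$ is densely defined.

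\emph{Part (a).} First I would compute the adjoint of the (possibly unbounded) densely defined operator $AB$ on the domain $\cD(B)$. For $v\in\cD(B)$ and $w\in\cH$ one has $\la ABv,w\ra=\la Bv,A^*w\ra$, so $w\in\cD((AB)^*)$ with $(AB)^*w=B^*A^*w$ precisely when $A^*w\in\cD(B^*)$; hence $\cD((AB)^*)=\{w:A^*w\in\cD(B^*)\}=(A^*)^{-1}(\cD(B^*))$ and $(AB)^*=B^*A^*$ on that domain. Now $AB$ is densely defined, so $AB$ is bounded iff its adjoint is everywhere defined, i.e. iff $(A^*)^{-1}(\cD(B^*))=\cH$, which is exactly $A^*(\cH)\subeq\cD(B^*)$. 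When this holds, the identity $(AB)^*=B^*A^*$ just proved gives the asserted formula (and $AB$ bounded forces $(AB)^*$ bounded, so $B^*A^*$ is bounded as well).

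\emph{Part (b).} Here $B^*$ is assumed densely defined, so $\oline B=(B^*)^*$ exists. Apply part (a) with the roles of $B$ replaced by $B^*$ and $A$ replaced by $A^*$: the densely defined operator $A^*B^*$ on $\cD(B^*)$ is bounded iff $(A^*)^*(\cH)=A(\cH)\subeq\cD((B^*)^*)=\cD(\oline B)$, and in that case $(A^*B^*)^*=(B^*)^*A^*=\oline B\,A^*$. Since a bounded operator has a bounded adjoint, $A^*B^*$ bounded implies $\oline B A^*$ bounded; to finish I would note that replacing $A$ by $A^*$ is harmless — applying the same statement to $A^{**}=A$ shows the final claim with $A$ in place of $A^*$, namely $\oline B A$ is bounded. (Alternatively, observe $\oline BA\subseteq(\oline BA^*)^{**}$-type reasoning is not needed: one simply reruns part (a) with $A$ itself, since $A(\cH)\subeq\cD(\oline B)$ makes $\oline B A$ everywhere defined, and $\oline B A=((\oline B A)^*)^*$ with $(\oline B A)^*=A^*B^*$ bounded, so $\oline B A$ is bounded by the closed graph theorem.)

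The only mildly delicate point — the place I would be most careful — is bookkeeping of domains: making sure that "$AB$ is densely defined" genuinely holds (its domain is $\cD(B)$, which is dense by hypothesis) so that $(AB)^*$ is well defined, and that the adjoint computation $\la ABv,w\ra=\la Bv,A^*w\ra$ is valid for \emph{all} $v\in\cD(B)$ (it is, since $A$ is everywhere bounded). Everything else is a formal application of the adjoint calculus and the equivalence "densely defined $+$ adjoint everywhere defined $\Rightarrow$ bounded," which is the closed graph theorem.
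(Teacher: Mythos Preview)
Your argument is essentially the paper's own proof: compute $(AB)^*$ directly, observe that its domain is $(A^*)^{-1}\cD(B^*)$, and use that a densely defined operator is bounded iff its adjoint is everywhere defined (closed graph). There is one slip in part (b): applying (a) with $A$ replaced by $A^*$ and $B$ by $B^*$ yields $(A^*B^*)^* = (B^*)^*(A^*)^* = \oline B\, A$, not $\oline B\, A^*$, so the boundedness of $\oline B A$ drops out immediately and no ``replacing $A$ by $A^*$'' step is needed---your parenthetical alternative (that $\oline B A$ is everywhere defined and closed, hence bounded by the closed graph theorem) is precisely the paper's formulation.
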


\begin{proof} (a) Suppose first that $AB$ is bounded. Then, for $v \in \cH$, the linear 
functional 
\[ \cD(B) \to \C, \quad w \mapsto \la ABw, v \ra = \la Bw, A^* v \ra \] 
is continuous. This implies that $A^*v \in \cD(B^*)$ and $B^*A^*v = (AB)^*v$. 

If, conversely, $A^*\cH \subeq \cD(B^*)$, then 
$\cD(B^*A^*) = \cH$. Since we also have $B^*A^* \subeq (AB)^*$, we obtain equality. 
Hence $(AB)^*$ is an everywhere defined closed operator and therefore bounded. This in turn shows that 
$AB \subeq \oline{AB} = ((AB)^*)^*$ is bounded. 

(b) In view of (a), $A^*B^*$ is bounded if and only if 
$A(\cH) \subeq \cD(B^{**}) = \cD(\oline B)$. Then 
$\oline B A = B^{**}A = (A^* B^*)^*$ is closed and everywhere defined, hence bounded. 
\end{proof}

\begin{lemma}\label{lem:der-rep} 
Let $(\pi, \cH)$ be a continuous unitary representation of the Lie group $G$ with 
exponential function. Let $B(\cH)_c \subeq B(\cH)$ denote the space 
of continuous vectors for the left multiplication action on $B(\cH)$ 
and write  $\lambda(g)(A) := \pi(g)\circ A$ for the corresponding continuous representation 
of $G$ on $B(\cH)_c$. Fix  $x \in \g$.
\begin{itemize}
\item[{\upshape (i)}]
Let $A\in B(\cH)_c$. If $A\in \cD(\oline{\dd\lambda}(x))$, then  
$A(\cH) \subeq \cD(\oline{\dd\pi}(x))$ and $ 
\oline{\dd\lambda}(x)A = \oline{\dd\pi}(x) A$. 
\item[{\upshape (ii)}] Let $A\in B(\cH)_c$ such that
the operator  $\oline{\dd\pi}(x)^2 A$ is defined on $\cH$. Then 
$\oline{\dd\pi}(x)^2 A$ is bounded
and $A \in \cD(\oline{\dd\lambda}(x))$. 
\end{itemize}
\end{lemma}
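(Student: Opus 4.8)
The plan is to reduce both assertions to statements about bounded operators of the form $CA$ where $C$ is a selfadjoint generator $\oline{\dd\pi}(x)$ (or a power thereof), using Lemma~\ref{lem:prod} as the workhorse, together with the elementary theory of the one-parameter group $\pi_x(t) = \pi(\exp tx) = e^{t\oline{\dd\pi}(x)}$.

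For (i): assume $A \in \cD(\oline{\dd\lambda}(x))$, i.e.\ the limit $\oline{\dd\lambda}(x)A = \lim_{t \to 0} \tfrac{1}{t}(\pi_x(t)A - A)$ exists in norm in $B(\cH)$. Apply both sides to a vector $v \in \cH$: then $\tfrac{1}{t}(\pi_x(t)(Av) - Av)$ converges in $\cH$ to $(\oline{\dd\lambda}(x)A)v$. By definition of the infinitesimal generator of $\pi_x$ on $\cH$ (the Definition in Section~\ref{sec:1}), this says exactly that $Av \in \cD(\oline{\dd\pi}(x))$ with $\oline{\dd\pi}(x)(Av) = (\oline{\dd\lambda}(x)A)v$; since $v$ was arbitrary we get $A(\cH) \subeq \cD(\oline{\dd\pi}(x))$ and $\oline{\dd\pi}(x)A = \oline{\dd\lambda}(x)A$. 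So (i) is essentially immediate from unpacking the two notions of generator; the only care needed is that norm convergence in $B(\cH)$ implies (strong) pointwise convergence in $\cH$, which is trivial.

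For (ii): suppose $\oline{\dd\pi}(x)^2 A$ is everywhere defined on $\cH$. Write $C := i\oline{\dd\pi}(x)$, a selfadjoint operator with $C^* = C$ and $C^{**} = C$. First, boundedness: the hypothesis says $A(\cH) \subeq \cD(C^2)$. Since $C$ is selfadjoint, $\cD(C^2) \subeq \cD(C) = \cD(C^*) = \cD(\oline C)$; applying Lemma~\ref{lem:prod}(b) with $B = C$ (so $B^* = C$ as well, and $\oline B = C$), the inclusion $A(\cH) \subeq \cD(\oline B)$ gives that $CA$, hence $\oline{\dd\pi}(x)A$, is bounded. To handle $C^2 A$: I would use that $C^2 A$ densely defined and everywhere defined, plus $C^2$ selfadjoint, so that Lemma~\ref{lem:prod}(a)/(b) applied with $B = C^2$ yields that $C^2 A = \oline{\dd\pi}(x)^2 A$ is a closed everywhere-defined operator, hence bounded by the closed graph theorem.

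It then remains to show $A \in \cD(\oline{\dd\lambda}(x))$, i.e.\ that $\tfrac{1}{t}(\pi_x(t)A - A)$ converges in $B(\cH)$-norm as $t \to 0$. The natural candidate limit is $CA$ (already shown bounded, in the guise $i\oline{\dd\pi}(x)A$). For a single vector $v$, the fundamental theorem of calculus for the one-parameter group gives $\pi_x(t)Av - Av = \int_0^t \pi_x(s)\,\oline{\dd\pi}(x)Av\,ds$, and iterating once more (legitimate since $\oline{\dd\pi}(x)Av \in \cD(\oline{\dd\pi}(x))$ because $Av \in \cD(C^2)$),
\[
\pi_x(t)Av - Av - t\,\oline{\dd\pi}(x)Av = \int_0^t \int_0^s \pi_x(r)\,\oline{\dd\pi}(x)^2 A v\, dr\, ds,
\]
whence $\|\pi_x(t)Av - Av - t\,\oline{\dd\pi}(x)Av\| \leq \tfrac{t^2}{2}\|\oline{\dd\pi}(x)^2 A\|\,\|v\|$. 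Dividing by $t$ and taking the supremum over $\|v\| \leq 1$ gives $\big\|\tfrac{1}{t}(\pi_x(t)A - A) - \oline{\dd\pi}(x)A\big\| \leq \tfrac{t}{2}\|\oline{\dd\pi}(x)^2 A\| \to 0$, which is exactly norm-differentiability at $t=0$. (A completely analogous estimate at an arbitrary $t_0$, using $\pi_x(t_0)$-equivariance, shows the orbit map is $C^1$, but for membership in $\cD(\oline{\dd\lambda}(x))$ the estimate at $0$ suffices, since $A \in B(\cH)_c$ is already assumed.)

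\textbf{Main obstacle.} The only delicate point is the bookkeeping around unbounded selfadjoint operators: ensuring that $C^2 A$ being "defined on $\cH$" really does force boundedness, and that the second-order Taylor remainder integral above is justified (one needs $Av$ and $\oline{\dd\pi}(x)Av$ to lie in the right domains, which follows from $Av \in \cD(C^2)$, and one needs the integrands to be continuous $\cH$-valued functions, which follows from strong continuity of $\pi_x$). All of this is routine once Lemma~\ref{lem:prod} is invoked correctly with the right choice of $B$; I anticipate no conceptual difficulty, only the need to state the domain inclusions $\cD(C^2) \subeq \cD(C)$ and $\cD(C) = \cD(\oline C)$ cleanly.
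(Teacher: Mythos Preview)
Your proof is correct and slightly more streamlined than the paper's. The differences are worth noting.

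For (i), you argue directly: norm convergence of $\tfrac{1}{t}(\pi_x(t)A - A)$ in $B(\cH)$ implies strong pointwise convergence in $\cH$, which is the very definition of $Av \in \cD(\oline{\dd\pi}(x))$. The paper instead tests weakly against $w \in \cD(\oline{\dd\pi}(x))$ and deduces $Av \in \cD(\oline{\dd\pi}(x)^*) = \cD(\oline{\dd\pi}(x))$. Your route is shorter and avoids the adjoint.

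For (ii), both proofs start identically, invoking Lemma~\ref{lem:prod}(b) for boundedness of $\oline{\dd\pi}(x)A$ and $\oline{\dd\pi}(x)^2 A$. For the norm-differentiability, you go straight to a second-order Taylor remainder with an $\cH$-valued integral, obtaining the uniform estimate $\big\|\tfrac{1}{t}(\pi_x(t)A-A)-\oline{\dd\pi}(x)A\big\| \le \tfrac{|t|}{2}\|\oline{\dd\pi}(x)^2 A\|$. The paper instead works with scalar-valued integrals of the form $\tfrac{1}{t}\int_0^t \langle Tv, \pi_x(-s)w - w\rangle\, ds$ and proceeds in two passes: first setting $S=\oline{\dd\pi}(x)A$ to show $B:=\oline{\dd\pi}(x)A \in B(\cH)_c$, then setting $S=A$ and using that continuity of $B$ to squeeze out norm convergence. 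Your single-shot estimate is cleaner and yields a sharper constant.

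One small point to tighten: your parenthetical that ``the estimate at $0$ suffices'' is correct, but the paper's definition sets $\cD(\oline{\dd\lambda}(x)) = (B(\cH)_c)^1(\lambda_x)$, i.e.\ $C^1$-vectors, not merely vectors differentiable at $0$. You should state explicitly that $B(\cH)_c$ is norm-closed in $B(\cH)$, so the limit $B = \oline{\dd\pi}(x)A$ lies in $B(\cH)_c$; then $t \mapsto \pi_x(t)B$ is continuous, and the derivative $t \mapsto \pi_x(t)B$ of the orbit map is continuous everywhere, giving $C^1$. The paper makes this point in its final sentence; it costs you one line.
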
 

\begin{proof} Since the operators $\pi(g)$ are unitary, 
$B(\cH)_c$ is a norm closed subspace of $B(\cH)$ on which 
$\lambda$ defines a continuous representation of $G$ by isometries. 

(i)
For $x \in \g$, $v \in \cH$, $w \in \cD(\oline{\dd\pi}(x))$ 
and $A \in \cD(\oline{\dd\lambda}(x))$, 
we have 
\[ \la \oline{\dd\lambda}(x)Av, w \ra = \frac{d}{dt}\Big|_{t=0} \la Av, \pi(\exp(-tx))w \ra 
= -\la Av, \oline{\dd\pi}(x)w \ra,\] 
so that $A\cH \subeq \cD(\oline{\dd\pi}(x)^*) = \cD(\oline{\dd\pi}(x))$ and 
\[ \oline{\dd\lambda}(x) A = -\oline{\dd\pi}(x)^* \circ A = \oline{\dd\pi}(x) \circ A.\] 

(ii)
 From $A\cH \subeq  \cD(\oline{\dd\pi}(x))$
and Lemma~\ref{lem:prod}(b) it follows that 
$B := \oline{\dd\pi}(x)  A$ is bounded.
Now let $S\in B(\cH)$ be a bounded operator satisfying $S\cH \subeq  \cD(\oline{\dd\pi}(x))$. 
By Lemma~\ref{lem:prod}(b), the operator
$T := \oline{\dd\pi}(x)  S$ is bounded.
For every $v,w\in\cH$ such that $\|v\|=\|w\|=1$,
\begin{eqnarray}
\label{int-hadi-weak}
\frac{1}{t}\langle ( \pi(\exp tx) S - S)v,w\rangle - \langle Tv,w\rangle
&=& 
\frac{1}{t} \int_0^t \langle \pi(\exp s x)Tv - Tv,w\rangle ds \notag\\
&=& 
\frac{1}{t} \int_0^t \langle Tv,\pi(\exp(-sx))w-w\rangle ds.
\end{eqnarray}
From \eqref{int-hadi-weak} it follows that
$\|\pi(\exp(tx))S-S\|\leq 3|t|\cdot\|T\|$, so that $\lim_{t\to 0}\|\pi(\exp(tx))S-S\|=0$.
Setting $S:=B=\oline{\dd\pi}(x)A$ and $T:=\oline{\dd\pi}(x)^2A$ in \eqref{int-hadi-weak}, we obtain 
\begin{equation}
\label{hadi-limB}
\lim_{t \to 0} \|\pi(\exp t x)B -B\| = 0.
\end{equation}
Next, setting $S:=A$ and $T:=B$ 
in \eqref{int-hadi-weak}
and using \eqref{hadi-limB}, we obtain
that the equality
\[ \lim_{t \to 0} \frac{1}{t}\big( \pi(\exp tx) A - A\big) = B \] 
holds in the norm topology on $B(\cH)$. Since $B(\cH)_c$ is closed in $B(\cH)$, we obtain that
$B \in B(\cH)_c$ and  $A \in \cD(\oline{\dd\lambda}(x))$. 
\end{proof}

\begin{lemma}\label{lem:der-rep2} 
Let $(\pi, \cH)$ be a continuous unitary representation of the Lie group $G$ with 
exponential function. Let $B(\cH)_c \subeq B(\cH)$ denote the space 
of continuous vectors for the right  multiplication action on $B(\cH)$ 
and write  $\rho(g)A := A\pi(g)^{-1}$ for the corresponding continuous representation 
of $G$ on $B(\cH)_c$. Fix $x \in \g$. 
\begin{itemize}
\item[{\upshape (i)}]
Let $A\in B(\cH)_c$ such that $A \in \cD(\oline{\dd\rho}(x))$. Then 
$\|A \oline{\dd\pi}(x)\| < \infty$ and  $ 
\oline{\dd\rho}(x)A = -A\oline{\dd\pi}(x)$.
\item[{\upshape (ii)}] Let $A\in B(\cH)_c$ such that the operator 
$A\oline{\dd\pi}(x)^2$ is bounded. Then 
$A \in \cD(\oline{\dd\rho}(x))$. 
\end{itemize}
\end{lemma}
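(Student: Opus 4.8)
The plan is to read Lemma~\ref{lem:der-rep2} off Lemma~\ref{lem:der-rep} by transporting everything through the adjoint operation. Since $\pi$ is unitary, $(\rho(g)A)^{*} = (A\pi(g)^{-1})^{*} = \pi(g)A^{*}$ is precisely the left multiplication action on $A^{*}$; as $T \mapsto T^{*}$ is an isometric, norm-continuous (conjugate-linear) involution of $B(\cH)$, it follows at once that $A$ is a continuous vector for the right multiplication action if and only if $A^{*}$ is a continuous vector for the left multiplication action, and, dividing $(\rho(\exp tx)A - A)^{*} = \pi(\exp tx)A^{*} - A^{*}$ by $t$ and letting $t \to 0$, that $A \in \cD(\oline{\dd\rho}(x))$ if and only if $A^{*} \in \cD(\oline{\dd\lambda}(x))$, with $(\oline{\dd\rho}(x)A)^{*} = \oline{\dd\lambda}(x)A^{*}$ in that case. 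Here $\lambda$ and $\oline{\dd\lambda}(x)$ refer to the left multiplication setup of Lemma~\ref{lem:der-rep}.

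For (i) I would argue directly, mirroring the proof of Lemma~\ref{lem:der-rep}(i) but more simply: if $A \in \cD(\oline{\dd\rho}(x))$ then $\oline{\dd\rho}(x)A = \lim_{t \to 0} \tfrac{1}{t}(A\pi(\exp(-tx)) - A)$ is a norm limit, hence a strong limit; evaluating it on a vector $v \in \cD(\oline{\dd\pi}(x))$ and using that $A$ is bounded and $\tfrac{1}{t}(\pi(\exp(-tx))v - v) \to -\oline{\dd\pi}(x)v$ gives $\oline{\dd\rho}(x)A\,v = -A\oline{\dd\pi}(x)v$. Since $\cD(\oline{\dd\pi}(x))$ is dense (Stone) and $\oline{\dd\rho}(x)A$ is bounded, this shows $\|A\oline{\dd\pi}(x)\| < \infty$ and that $\oline{\dd\rho}(x)A$ is the unique bounded extension of $-A\oline{\dd\pi}(x)$, which is the meaning of the asserted identity $\oline{\dd\rho}(x)A = -A\oline{\dd\pi}(x)$. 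One can equally run this through adjoints, using Lemma~\ref{lem:der-rep}(i) for $A^{*}$ together with Lemma~\ref{lem:prod}(b) applied to the bounded operator $A^{*}$ and the skew-adjoint operator $\oline{\dd\pi}(x)$ to convert the inclusion $A^{*}(\cH)\subeq\cD(\oline{\dd\pi}(x))$ into boundedness of $A\oline{\dd\pi}(x)$.

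For (ii) the adjoint reduction is cleanest. Suppose $A\oline{\dd\pi}(x)^{2}$ is bounded. Applying Lemma~\ref{lem:prod}(a) to the bounded operator $A$ and the densely defined operator $\oline{\dd\pi}(x)^{2}$, boundedness of $A\oline{\dd\pi}(x)^{2}$ is equivalent to $A^{*}(\cH) \subeq \cD\big((\oline{\dd\pi}(x)^{2})^{*}\big)$. Now $i\oline{\dd\pi}(x)$ is self-adjoint (Stone), and the square of a self-adjoint operator, taken with its natural domain, is again self-adjoint, so $(\oline{\dd\pi}(x)^{2})^{*} = \oline{\dd\pi}(x)^{2}$; hence $\oline{\dd\pi}(x)^{2}A^{*}$ is defined on all of $\cH$. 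Since $A^{*}$ is a continuous vector for the left multiplication action, Lemma~\ref{lem:der-rep}(ii) then gives $A^{*} \in \cD(\oline{\dd\lambda}(x))$, and by the dictionary above $A \in \cD(\oline{\dd\rho}(x))$, as desired.

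None of this is deep; the only genuinely non-formal input is the classical fact that the square of a self-adjoint operator (with its natural domain) is self-adjoint, used in (ii) --- without it one only gets $(\oline{\dd\pi}(x)^{2})^{*} \supseteq \oline{\dd\pi}(x)^{2}$, which is too weak for the application of Lemma~\ref{lem:der-rep}(ii). The remaining care is purely bookkeeping: tracking domains, keeping the signs straight under the conjugate-linear map $T \mapsto T^{*}$, and observing that, unlike in Lemma~\ref{lem:der-rep}(i), the operator $A\oline{\dd\pi}(x)$ appearing in part (i) is only densely defined, so the displayed identity must be read as the everywhere-defined bounded operator $\oline{\dd\rho}(x)A$ being the continuous extension of $-A\oline{\dd\pi}(x)$.
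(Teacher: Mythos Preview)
Your proof is correct and follows essentially the same route as the paper: reduce to Lemma~\ref{lem:der-rep} via the adjoint dictionary $(\rho(g)A)^{*}=\lambda(g)A^{*}$. For (i) the paper uses only the adjoint argument (your second approach), reading off $\oline{\dd\rho}(x)A=(\oline{\dd\lambda}(x)A^{*})^{*}=(\oline{\dd\pi}(x)A^{*})^{*}=-A\oline{\dd\pi}(x)$ via Lemma~\ref{lem:prod}; your alternative direct computation on $v\in\cD(\oline{\dd\pi}(x))$ is a valid and perhaps more transparent variant. For (ii) the paper does exactly what you do, only citing Lemma~\ref{lem:prod} without spelling out that the step $\cD\big((\oline{\dd\pi}(x)^{2})^{*}\big)=\cD(\oline{\dd\pi}(x)^{2})$ uses the self-adjointness of $\oline{\dd\pi}(x)^{2}$ --- your making this explicit is an improvement in clarity.
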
 

\begin{proof} 
(i) 
Clearly, 
$A \in \cD(\oline{\dd\rho}(x))$ if and only if 
$A^* \in \cD(\oline{\dd\lambda}(x))$. If this is the case, then 
Lemma~\ref{lem:der-rep} implies that 
$\oline{\dd\lambda}(x)A^* = \oline{\dd\pi}(x)A^*$. 
Then $A\oline{\dd\pi}(x)$ is bounded by Lemma~\ref{lem:prod} and we obtain 
\[ \oline{\dd\rho}(x)A 
= \big(\oline{\dd\lambda}(x)A^*\big)^* 
= \big(\oline{\dd\pi}(x)A^*\big)^* = -A \oline{\dd\pi}(x).\] 

(ii) If $A\oline{\dd\pi}(x)^2$ is bounded, then 
$\oline{\dd\pi}(x)^2 A^*$ is bounded and defined on $\cH$ 
(Lemma~\ref{lem:prod}), so that $A^* \in \cD(\oline{\dd\lambda}(x))$ 
by Lemma~\ref{lem:der-rep}, and hence 
$A \in \cD(\oline{\dd\rho}(x))$. 
\end{proof}

The following characterization of smooth vectors 
is an extremely powerful tool. As we shall see in Section~\ref{sec:4} below, 
it can be used to construct $C^*$-algebras for infinite dimensional Lie groups 
and we expect a variety of other applications (cf.\ Section~\ref{sec:6}). 

\begin{theorem}[Characterization Theorem for smoothing operators] 
\label{thm:smoothop} 
Let $(\pi, \cH)$ be a smooth unitary representation of a Lie group~$G$ with exponential 
function and $A \in B(\cH)$. Consider the assertions 
\begin{itemize}
\item[\upshape(i)] The map $G \to B(\cH), g \mapsto \pi(g) A$ is smooth. 
\item[\upshape(ii)] The map $G \to B(\cH), g \mapsto A^*\pi(g)$ is smooth. 
\item[\upshape(iii)] $A\cH \subeq \cH^\infty$. 
\item[\upshape(iv)] $A\cH \subeq \cD^\infty$. 
\item[\upshape(v)] All operators $A^* \dd\pi(D)$, $D \in U(\g_\C)$, are bounded on $\cH^\infty$. 
\end{itemize}
Then we have the implications 
{\upshape(i) $\Leftrightarrow$ (ii) $\Rarrow$ (iii) $\Rarrow$ (iv) $\Leftrightarrow$ (v)}. 
If $\g$ is metrizable, then {\upshape(iii) $\Rarrow$ (i),(ii)} and if, in addition, 
$\g$ is metrizable Baire, e.g., Fr\'echet, then {\upshape(i)-(v)} are equivalent.
Condition {\upshape(iii)} implies that all operators 
$\dd\pi(D) A,  D\in U(\g_\C),$ are bounded. 
\end{theorem}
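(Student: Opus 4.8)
The plan is to establish the chain of implications in the stated order, relying on the duality lemmas proved just above.

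First, \textbf{(i) $\Leftrightarrow$ (ii)}. These are equivalent because the map $g \mapsto \pi(g)A$ in $B(\cH)$ is related to $g \mapsto A^*\pi(g)$ by the involution $T \mapsto T^*$, which is an isometric, hence smooth, anti-automorphism of $B(\cH)$; moreover $\pi(g)A$ at $g$ corresponds to $A^*\pi(g^{-1})$ up to this involution and the smooth inversion of $G$. So smoothness of one orbit map is equivalent to smoothness of the other.

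Next, \textbf{(i) $\Rarrow$ (iii)}: this is the trivial direction. If $g \mapsto \pi(g)A$ is smooth into $B(\cH)$, then for every $v \in \cH$ the orbit map $g \mapsto \pi(g)Av$ is smooth into $\cH$ (compose with evaluation at $v$, which is bounded linear), so $Av \in \cH^\infty$. Then \textbf{(iii) $\Rarrow$ (iv)} is immediate from $\cH^\infty \subeq \cD^\infty$.

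For \textbf{(iv) $\Leftrightarrow$ (v)}: assume (iv), so $A\cH \subeq \cD^\infty$. Then for each $D \in U(\g_\C)$ the operator $\dd\pi(D)A$ is everywhere defined; I would show it is closable (being a restriction of $(A^*\dd\pi(D^*))^*$, where $D^*$ is the formal adjoint in $U(\g_\C)$) and everywhere defined, hence bounded by the closed graph theorem — this also yields the final sentence ``$\dd\pi(D)A$ bounded''. Conversely, boundedness of all $A^*\dd\pi(D)$ on $\cH^\infty$ gives, via Lemma~\ref{lem:prod}, that $A\cH \subeq \cD(\oline{\dd\pi}(x_1)\cdots)$ for all iterated products, i.e. $A\cH \subeq \cD^\infty$. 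The precise bookkeeping between $U(\g_\C)$-elements and iterated operator products $\oline{\dd\pi}(x_1)\cdots\oline{\dd\pi}(x_n)$ is where Lemmas~\ref{lem:prod}, \ref{lem:der-rep}, \ref{lem:der-rep2} do the work.

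The heart of the theorem, and the main obstacle, is \textbf{(iii) $\Rarrow$ (i)} under metrizability of $\g$. Here the idea is to apply Corollary~\ref{cor:2.1}(i) to the left-multiplication representation $\lambda$ of $G$ on $B(\cH)_c$: one wants $A$ to be a smooth vector for $\lambda$, which by that corollary (once we know $\cH^1$ is dense, which follows from smoothness of $\pi$) reduces to checking that $A \in \cD^\infty(\lambda)$ and that the associated multilinear maps $\omega_A^n \colon \g^n \to B(\cH)_c$ are continuous. The containment $A \in \cD^n(\lambda)$ is obtained inductively from Lemma~\ref{lem:der-rep}(ii): since $A\cH \subeq \cD^\infty \subeq \cD(\oline{\dd\pi}(x)^2)$, part (ii) gives $A \in \cD(\oline{\dd\lambda}(x))$ with $\oline{\dd\lambda}(x)A = \oline{\dd\pi}(x)A$, and iterating (each $\oline{\dd\pi}(x_1)\cdots\oline{\dd\pi}(x_n)A$ is again bounded with range in $\cH^\infty$) pushes this up to all orders. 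The genuinely delicate point is \emph{continuity} of $\omega_A^n$ as a map on $\g^n$: this is exactly the kind of separate-continuity-implies-joint-continuity phenomenon that requires a Baire-category argument, which is why metrizability (and, for full equivalence with (i)--(v), the Baire property) of $\g$ enters — the relevant machinery is packaged in Theorem~\ref{thm:2.1}(i)--(ii) and its proof (via \cite{Bou74}, \cite{BS71}). I expect the argument to run: metrizability of $\g$ gives, via Theorem~\ref{thm:2.1}(i) applied to $\lambda$, that continuity of the $\omega_A^n$ already suffices for $A \in V^\infty(\lambda)$, i.e. (i); and one checks continuity of $\omega_A^n$ by hand using that $\omega_A^n(x_1,\dots,x_n) = \oline{\dd\pi}(x_1)\cdots\oline{\dd\pi}(x_n)A$ and estimating the operator norm, invoking that $\g$ is metrizable Baire for the last step when one also wants (iii)$\Rarrow$(i) to close the full cycle. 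The symmetric statement (ii) is then handled by the analogous argument with Lemma~\ref{lem:der-rep2} and right multiplication, or simply deduced from (i) via the already-established equivalence (i)$\Leftrightarrow$(ii).
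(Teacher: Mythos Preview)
Your outline is right in spirit for the easy implications, but the argument for (iii)~$\Rightarrow$~(i) has two genuine gaps.

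First, you misidentify the mechanism for continuity of $\omega_A^n \colon \g^n \to B(\cH)$. This is \emph{not} a separate-to-joint continuity / Baire-category argument on $\g^n$. The paper's route is: since $Av \in \cH^\infty$ (not merely $\cD^\infty$), for each fixed $v \in \cH$ the map $(x_1,\ldots,x_n) \mapsto \dd\pi(x_1)\cdots\dd\pi(x_n)Av$ into $\cH$ is continuous. One then invokes a uniform-boundedness result (Proposition~5.1 in \cite{Ne10b}) to pass from pointwise-in-$v$ continuity to operator-norm continuity of the $B(\cH)$-valued multilinear map; this is where metrizability of $\g$ (hence of $\g^n$) is used, and it requires no Baire hypothesis. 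The Baire property enters only in (iv)~$\Rightarrow$~(iii), via Theorem~\ref{thm:2.1}(ii). Your phrase ``estimating the operator norm'' does not capture this, and your suggestion that Baire is needed here conflates the two steps.

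Second, you do not address the prerequisite $A \in B(\cH)_c$ (and likewise $\dd\pi(x_1)\cdots\dd\pi(x_n)A \in B(\cH)_c$) needed before Lemma~\ref{lem:der-rep}(ii) can even be applied. The paper handles this with a separate argument: using a chart $\phi \colon U_\g \to G$ and the pulled-back Maurer--Cartan form, one writes $\pi(\phi(x))Av - Av$ as an integral of $\pi(\gamma(t))\dd\pi(\delta(\gamma)_t)Av$ and bounds the operator norm by $\sup_t \|\omega_A(\delta(\gamma_x)_t)\|$, which tends to $0$ as $x \to 0$ by the continuity of $\omega_A$ just established. Without this step, your inductive use of Lemma~\ref{lem:der-rep}(ii) does not get off the ground.
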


\begin{proof} (i) $\Leftrightarrow$ (ii) follows from the fact that the map 
$B \mapsto B^*, B(\cH) \to B(\cH)$ is real linear and continuous, 
hence smooth. 

(i) $\Rarrow$ (iii): For every $v \in \cH$, the map $G \to \cH, g \mapsto \pi(g)Av$ 
is smooth 
by (i), so that $Av \in \cH^\infty$. 

(iii) $\Rarrow$ (iv) follows from $\cH^\infty \subeq \cD^\infty$. 

(iv) $\Rarrow$ (v): For $x_1, \ldots, x_n \in \g$, (iv) implies that 
\begin{align*}
A\cH \subeq \cD^\infty 
&\subeq \cD(\oline{\dd\pi}(x_1) \cdots \oline{\dd\pi}(x_n)) 
= \cD(\dd\pi(x_1)^* \cdots \dd\pi(x_n)^*)\\
& \subeq  \cD((\dd\pi(x_n) \cdots \dd\pi(x_1))^*).
\end{align*}
Lemma~\ref{lem:prod}(a) therefore shows that 
$A^*\dd\pi(x_n) \cdots \dd\pi(x_1)$ is bounded. This implies~(v).

We also obtain from Lemma~\ref{lem:prod}(a) that 
the operators 
$(\dd\pi(x_n) \cdots \dd\pi(x_1))^* A = 
(-1)^n \dd\pi(x_1) \cdots \dd\pi(x_n) A$ are bounded, which is the last assertion 
of the theorem. 

(v) $\Rarrow$ (iv): Let $v \in \cH$ and $D \in U(\g_\C)$. Since  $A^* \dd\pi(D) \colon \cH^\infty \to \cH$ 
is bounded, Lemma~\ref{lem:prod}(b) implies that $Av \in \cD(\dd\pi(D)^*)$. 
For every $x \in \g$, the operator $\dd\pi(x)^* = -\oline{\dd\pi}(x)$ is the infinitesimal 
generator of the corresponding unitary one-parameter group. We thus obtain 
by Lemma~\ref{lem:prod}(a) that $Av \in \cD(\dd\pi(x)^*) = \cD(\oline{\dd\pi}(x))$ 
with $\dd\pi(x)^*A = (A^* \dd\pi(x))^*$ and in particular $Av \in \cD^1$. 

Suppose, by induction, that 
\[  A\cH \subeq \cD^n = \bigcap_{x_1, \ldots, x_n \in \g} 
\cD(\dd\pi(x_n)^* \cdots \dd\pi(x_1)^*)\] 
and 
\[ (A^*\dd\pi(x_1)\cdots\dd\pi(x_n))^*
=
\dd\pi(x_n)^*\cdots \dd\pi(x_1)^*A.\] 
For $x_1, \ldots, x_{n+1}\in \g$, the boundedness of 
$\big(A^* \dd\pi(x_1) \cdots \dd\pi(x_n)\big) \dd\pi(x_{n+1}),$ 
leads with Lemma~\ref{lem:prod}(a) to 
\[ \big(A^* \dd\pi(x_1) \cdots \dd\pi(x_n)\big)^* v \in \cD(\dd\pi(x_{n+1})^*)\] 
and 
\[ (A^*\dd\pi(x_1)\cdots\dd\pi(x_{n+1}))^*
=
\dd\pi(x_{n+1})^*\cdots \dd\pi(x_1)^*A.\]  
In particular,  $Av \in \cD^{n}$ for every $n\in\N$. This shows that $Av \in \cD^\infty$.

(iv) $\Leftrightarrow$ (iii): If, in addition, $\g$ is metrizable Baire, 
then Theorem~\ref{thm:2.1}(ii) implies that $\cD^\infty = \cH^\infty$. 

(iii) $\Rarrow$ (i): We now assume that $\g$ is metrizable. 
Let $A \in B(\cH)$ be a smoothing operator and $n \in \N$. We have 
already seen above that, for $x_1, \ldots, x_n \in \g$, the operator 
$\dd\pi(x_1) \cdots \dd\pi(x_n)A$ is bounded. Next we observe that the $n$-linear map 
\[ F \colon\g^n \to B(\cH), \quad F(x_1, \ldots, x_n) := \dd\pi(x_1) \cdots \dd\pi(x_n)A \] 
has the property that, for every $v \in \cH$, the map 
\[ F^v \colon\g^n \to\cH, \quad F^v(x_1, \ldots, x_n) := \dd\pi(x_1) \cdots \dd\pi(x_n)Av \] 
is continuous because $Av \in \cH^\infty$. Since $\g$, and therefore $\g^n$, is metrizable, Proposition~5.1 in \cite{Ne10b} implies that $F$ is continuous. 

In view of Corollary~\ref{cor:2.1}(i), we have to show 
for the left multiplication action $\lambda \colon G \to B(B(\cH))$ that 
$A \in B(\cH)_c$, that 
$A \in \cD^n(\lambda)$ for every $n$, and that the corresponding 
$n$-linear maps 
$\omega_A^n \colon \g^n \to B(\cH)$ are continuous. 
First we assume that we have shown that
$\dd\pi(x_1)\cdots \dd\pi(x_n)A \in B(\cH)_c$ for every $n\in\N_0$.
From Lemma~\ref{lem:der-rep} we know that, 
for $x \in \g$, the domain of $\oline{\dd\lambda}(x)$ 
contains all operators $T \in B(\cH)_c$ for which 
$\oline{\dd\pi}(x)^2T$ is defined on $\cH$ and in this case 
$\oline{\dd\lambda}(x)T = \oline{\dd\pi}(x)T$. 
This readily implies that,
for $x_1, \ldots, x_n \in \g$, the operator 
$A$ is contained in $\cD^\infty(\lambda)$ with 
\[ \dd\pi(x_1) \cdots \dd\pi(x_n) A 
= \oline{\dd\lambda}(x_1) \cdots \oline{\dd\lambda}(x_n) A,\] 
so that the map $F$ above coincides with $\omega_A^n$, and thus $\omega_A^n$ is continuous. 

It now remains to show that (iii) implies $\dd\pi(x_1)\cdots \dd\pi(x_n)A \in B(\cH)_c$. 
First we prove the latter statement for $n=0$, i.e., that the map 
$G \to B(\cH), g \mapsto \pi(g)A$ is continuous. The same argument then applies 
to all operators $\dd\pi(x_1) \cdots \dd\pi(x_n) A$. 
We have already seen that 
\[ \omega_{A} \colon \g \to B(\cH), \quad 
x \mapsto \dd\pi(x) \circ A \] 
is continuous. 
Consider a smooth curve $\gamma \colon [0,1] \to G$ starting in $\1$ and ending in 
$g$. Let $\delta(\gamma) \colon [0,1] \to \g$ denote the left logarithmic derivative of $\gamma$, i.e., 
$\gamma'(t) = \gamma(t) \delta(\gamma)_t$. For $v \in \cH$, we then have 
\[ \pi(\gamma(1))Av - Av = \int_0^1 \pi(\gamma(t))\dd\pi(\delta(\gamma)_t) A v\, dt,\] 
which leads to the estimate 
\[ \|\pi(\gamma(1))A-A\| \leq \sup \{ \|\omega_{A}(\delta(\gamma)_t)\|\colon 0 \leq t \leq 1\}.\] 

Let $U_\g \subeq \g$ be a convex 
open $0$-neighborhood and $\phi \colon U_\g \to G$ be a chart of $G$ 
with $\phi(0) = \1$. If $\theta_G(v_g) := g^{-1} \cdot v_g$ denotes the left Maurer--Cartan 
form of $G$, then $\theta := \phi^*\theta_G \in \Omega^1(U_\g,\g)$ is a smooth $1$-form, 
hence a smooth function on $U_\g \times \g$. 
For $x \in U_\g$, we obtain a smooth curve 
$\gamma_x(t) := \phi(tx)$ in $G$ with $\gamma_x(1) = \phi(x)$. 
Next we observe that, for $\eta_x(t) = tx$, the map 
\[ H \colon U_\g \to C([0,1],\g), \quad H(x)(t) := \delta(\gamma_x)_t 
= (\eta_x^*\theta)(t) = \theta(tx)(x) \] 
is a continuous function because the function $U_\g \times [0,1] \to \g, (x,t) \mapsto \theta(tx)(x)$ 
is continuous. We thus obtain in particular $\lim_{x \to 0} \|H(x)\|_\infty = 0$, and this implies 
\[ \|\pi(\phi(x))A-A\| = \|\pi(\gamma_x(1))A-A\| 
\leq \sup \{ \|\omega_{A}\big(H(x)(t)\big)\|\colon 0 \leq t \leq 1\} \to 0.\] 
for $x \to 0$. This completes the proof of (i). 
\end{proof}

\section{Smooth vectors for semibounded representations} 
\label{sec:3}

In this section we take a closer look at the subspace of 
smooth vectors for a semibounded unitary representation 
$(\pi, \cH)$. Our main result is the surprisingly general fact that, 
if $x_0 \in\g$ has a neighborhood $U$ such that the operators 
$i\dd\pi(x)$, $x \in U$, are uniformly bounded from above, 
then the one-parameter group $\pi_{x_0}(t) := \pi(\exp tx_0)$ has the 
same smooth vectors as $\pi$ (Theorem~\ref{thmsbsmoothonegen}). 

\subsection{Scales of Hilbert spaces}
\label{subsec:3.1}

Before we turn to the proof of the main theorem of this section, we recall 
the setting of Nelson's Commutator Theorem (Section~X.5 in \cite{RS75}). 

Let $N\geq\1$ be a self-adjoint operator on the Hilbert space $\cH$. For $k\in\Z$, 
we define $\cH_k$ as the completion of the dense subspace $\cD(N^{k/2})$ of $\cH$ with 
respect to the norm 
\[ \|v\|_k:= \|N^{k/2}v\|.\]
For $k \geq 0$, $\cD(N^{k/2})$ is complete with respect to $\|\cdot\|_k$, so that 
$\cH_k = \cD(N^{k/2})$. In particular, $\cH_0 = \cH$. 
Furthermore, the scalar product $\la \cdot,\cdot \ra$ on 
$\cD(N^{k/2}) \times \cH$ extends to a sesquilinear map 
$\cH_k \times\cH_{-k} \to \C$ which exhibits $\cH_{-k}$ as the dual space of $\cH_k$. 
We also note that, for every $k \in \Z$, the operator
$N^{1/2} \colon \cD(N^{k/2}) \to \cD(N^{(k-1)/2})$ 
extends to a unitary operator $\cH_k \to \cH_{k-1}$. 
For $n \leq m$, the inclusion $\cD(N^{m/2}) \into \cD(N^{n/2})$, extends to a 
continuous inclusion $\cH_m \into  \cH_n$. 

Any operator $A\in B(\cH_k,\cH_\ell)$ is uniquely determined by its 
restriction to $\cD(N^{k/2})$, so that $B(\cH_k,\cH_\ell)$ can be identified with the 
space of those linear operators $\cD(N^{k/2}) \to \cH_\ell$ extending continuously to 
all of $\cH_k$. Then $\|A\|_{k,\ell}$ denotes the corresponding operator norm. 
This means that, for $A \in B(\cH_{2},\cH)$, we have 
\[ \|Av\| \leq \|A\|_{2,0} \|Nv\| \quad \mbox{ for } \quad v \in \cD(N).\] 

For $A\in B(\cH_k,\cH_{-k})$, $k \in \Z$, we define
\[ [N,A]\in B(\cH_{k+2},\cH_{-k-2}), \quad  [N,A]v:= N(Av)-A(Nv) 
\quad \mbox{ for } \quad v \in \cH_{k+2}.\]
If, for each $v\in \cH_{k+2}$, we have $[N,A]v\in \cH_{-k}$ and there 
exists a $c\geq 0$ with $\|[N,A]v\|_{-k}\leq c\|v\|_k$ for $v\in \cH_{k+2}$, 
we obtain 
by continuous extension an operator, also denoted $[N,A]$, in $B(\cH_k,\cH_{-k})$. 

We will need the following version of Nelson's Commutator Theorem.

\begin{lemma}\label{nelcommthmver0}
Let $A\in B(\cH_1,\cH_{-1})$ such that $[N,A]\in  B(\cH_1,\cH_{-1})$. Then $A\in B(\cH_2,\cH)$ and
\begin{align}\label{nctv0ineq1}
\|A\|_{2,0} \ \leq \ \|A\|_{1,-1}^\frac{1}{2}\cdot (\|A\|_{1,-1}+\|[N,A]\|_{1,-1})^{\frac{1}{2}}.
\end{align}
\end{lemma}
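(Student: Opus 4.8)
The plan is to run the classical interpolation-theoretic argument behind Nelson's Commutator Theorem, exploiting the scale $(\cH_k)_{k\in\Z}$ and the unitaries $N^{1/2}\colon\cH_k\to\cH_{k-1}$. The core idea is that the claimed bound $\|A\|_{2,0}$ is an estimate on a single entry of the scale, and we should be able to obtain it by bounding $A$ on $\cH_1$ and $\cH_3$ (or, symmetrically, on $\cH_0$ and $\cH_2$) with good constants and then interpolating. Concretely, I would first note that $A\in B(\cH_1,\cH_{-1})$ gives, after conjugating by the appropriate powers of $N^{1/2}$, a bounded operator $\tilde A := N^{-1/2}A N^{-1/2}\in B(\cH)$ with $\|\tilde A\| = \|A\|_{1,-1}$; similarly the commutator hypothesis $[N,A]\in B(\cH_1,\cH_{-1})$ says $N^{-1/2}[N,A]N^{-1/2}\in B(\cH)$ with norm $\|[N,A]\|_{1,-1}$. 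The goal $\|A\|_{2,0}$ is the norm of $A N^{-1}\in B(\cH)$ (equivalently $N^{-1/2}\cdot N^{1/2}A N^{-1}$), so everything reduces to an inequality among bounded operators built from $N$, $A$, and the commutator.

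The key computational step is the standard ``commutator trick'': for $v\in\cH_3$ (a dense core) write
\[
\|N^{1/2}Av\|^2 = \langle NAv, Av\rangle = \langle A(Nv),Av\rangle + \langle [N,A]v,Av\rangle .
\]
The first term is $\langle Nv, A^*Av\rangle$-type and is controlled by $\|A\|_{1,-1}\|Nv\|\,\|N^{1/2}Av\|$ after moving half a power of $N$ onto each factor and using $A\in B(\cH_1,\cH_{-1})$; the second term is controlled by $\|[N,A]\|_{1,-1}\|v\|_1\|Av\|_{1}$, which again folds back into $\|Nv\|$ and $\|N^{1/2}Av\|$ since $N\geq\1$ forces $\|v\|_1\le\|Nv\|$ and $\|Av\|_1\le\cdots$. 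Collecting terms one arrives at a quadratic inequality of the shape $X^2 \le \|A\|_{1,-1}\,Y X + (\|A\|_{1,-1}+\|[N,A]\|_{1,-1})\,Y X$ with $X=\|N^{1/2}Av\|$... — more precisely, after the bookkeeping one isolates $\|N^{1/2}Av\| \le \big(\|A\|_{1,-1}(\|A\|_{1,-1}+\|[N,A]\|_{1,-1})\big)^{1/2}\|Nv\|$, which upon taking the supremum over $v\in\cD(N)$ with $\|Nv\|\le 1$ (and using that $\cH_3$ is dense in $\cH_2$, so the a priori bound propagates) yields both $A\in B(\cH_2,\cH)$ and the stated inequality \eqref{nctv0ineq1}.

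The main obstacle is the justification of the formal manipulations: a priori $A$ is only known to map $\cH_1\to\cH_{-1}$, so $NAv$ and $[N,A]v$ need not lie in $\cH$, and the pairings above must be interpreted through the duality $\cH_k\times\cH_{-k}\to\C$ rather than as honest Hilbert-space inner products. The clean way around this is to work with the conjugated bounded operators on $\cH$ from the outset (replacing $A$ by $N^{-1/2}AN^{-1/2}$ and the commutator by its conjugate), so that the quadratic estimate becomes an inequality between genuinely bounded operators, and only at the very end translate the resulting bound on $N^{-1/2}(N^{1/2}A)N^{-1}$ back into the statement $\|A\|_{2,0}\le\|A\|_{1,-1}^{1/2}(\|A\|_{1,-1}+\|[N,A]\|_{1,-1})^{1/2}$. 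Alternatively one may invoke an abstract interpolation inequality for the scale (``convexity of $k\mapsto\log\|A\|_{k,-k+j}$''), but the direct commutator computation is more self-contained and is what I would write up, being careful about domains and using the density of $\cD(N^{3/2})$ in $\cH_2$ to close the argument.
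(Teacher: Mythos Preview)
Your opening paragraph correctly identifies the paper's strategy: first use the commutator hypothesis to bound $A$ as a map $\cH_3\to\cH_1$, then interpolate between this and the given $\cH_1\to\cH_{-1}$ bound to reach $\cH_2\to\cH_0$. The paper does exactly this, citing Lemma~1 in Section~X.5 of \cite{RS75} for the first step (yielding $\|A\|_{3,1}\le\|A\|_{1,-1}+\|[N,A]\|_{1,-1}$) and Proposition~9 in Appendix~IX.4 of \cite{RS75} (an operator interpolation inequality applied with $T=N^{-1/2}AN^{-1/2}$) for the second.

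The problem is that you then set this plan aside and try to collapse everything into a single commutator computation, calling interpolation an ``alternative.'' This does not work, and the error is in your bound on the first term. From $A\in B(\cH_1,\cH_{-1})$ the duality pairing gives
\[
|\langle A(Nv),Av\rangle|\ \le\ \|A\|_{1,-1}\,\|Nv\|_1\,\|Av\|_1\ =\ \|A\|_{1,-1}\,\|v\|_3\,\|Av\|_1,
\]
not $\|A\|_{1,-1}\,\|Nv\|\,\|N^{1/2}Av\|$ as you claim; there is no way to ``move half a power of $N$'' so as to replace $\|Nv\|_1$ by $\|Nv\|_0$ using only the $\cH_1\to\cH_{-1}$ bound. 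Feeding this into your quadratic inequality yields exactly $\|Av\|_1\le(\|A\|_{1,-1}+\|[N,A]\|_{1,-1})\|v\|_3$, i.e.\ the $\|A\|_{3,1}$ estimate of Reed--Simon's Lemma~1, and nothing stronger. To pass from the pair $(\|A\|_{1,-1},\|A\|_{3,1})$ to the midpoint $\|A\|_{2,0}$ with the geometric-mean constant in \eqref{nctv0ineq1} requires a genuine interpolation theorem (Heinz--Kato / three-line); the elementary scale inequality $\|w\|_0^2\le\|w\|_1\|w\|_{-1}$ runs the wrong way, since it gives $\|Av\|_0^2\le\|A\|_{3,1}\|A\|_{1,-1}\,\|v\|_3\|v\|_1$ and $\|v\|_3\|v\|_1\ge\|v\|_2^2$. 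So keep your original two-step plan; the interpolation step is essential, not optional.
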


\begin{proof}
By Lemma 1 in Section X.5 of \cite{RS75}, we have $A\in B(\cH_3,\cH_1)$ with 
$$\|A\|_{3,1}\leq \|A\|_{1,-1}+\|[N,A]\|_{1,-1}.$$
Now we apply Proposition~9 in App.~IX.4 of \cite{RS75} 
with $T=N^{-1/2}AN^{-1/2}, B=N^{1/2}$ and $\tilde A = N^{1/2}$ where $\tilde A$ denotes the $A$ in \cite{RS75}. 
This yields $A(\cH_2)\subset \cH$ and \eqref{nctv0ineq1}. Here we use that, for $k \in\Z $,   
the operator $N^{-1/2}:\cH_k\rightarrow \cH_{k+1}$ is unitary. 
\end{proof}

\subsection{Specifying smooth vectors by single elements} 

\begin{definition} \label{def:sembo} Let $G$ be a locally convex Lie group with 
exponential function. We call a smooth unitary representation $(\pi, \cH)$ of $G$ 
{\it semibounded} if the function 
\[ s_\pi \colon \g \to \R \cup \{ \infty\}, \quad 
s_\pi(x) 
:= \sup\big(\Spec(i\oline{\dd\pi}(x))\big) \]
is bounded on a neighborhood of some point $x_0 \in \g$.  
Then the set $W_\pi$ of all such points $x_0$ 
is an open $\Ad(G)$-invariant convex cone. 
Let $I_\pi \subeq \g'$ (the topological dual) be the {\it momentum set of $\pi$}, i.e., 
the weak-$*$-closed convex hull of the linear functionals of the form 
\[ x \mapsto -i \frac{\la \dd\pi(x)v,v\ra}{\la v,v\ra}, \quad 0 \not= v \in \cH^\infty.\] 
Then $s_\pi(x) = \sup \la I_\pi, - x\ra$ 
shows that $s_\pi$ is a lower semi-continuous homogeneous convex function. 
\end{definition}

Before we turn to the main point of this section, we note the following characterization 
of those representations where all operators are smooth. 
\begin{proposition} The identity $\1 \in B(\cH)$ is a smoothing operator if and only if 
$\cH = \cH^\infty$. If the Lie algebra $\g$ is a barreled space, then this 
is equivalent to $\pi \colon G\to \U(\cH)$ being a morphism of Lie groups when 
$\U(\cH)$ carries the norm topology. 
\end{proposition}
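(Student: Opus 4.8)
The plan is to prove the two statements in turn. First I would observe that the equivalence ``$\1$ is smoothing $\iff \cH=\cH^\infty$'' is immediate from the definition: the range of $\1$ is all of $\cH$, so $\1(\cH)\subeq\cH^\infty$ means exactly $\cH=\cH^\infty$. The substantive content is the second equivalence, under the hypothesis that $\g$ is barreled.

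\textbf{From $\cH=\cH^\infty$ to norm-continuity of $\pi$.} Suppose $\cH=\cH^\infty$. I would like to apply the Characterization Theorem~\ref{thm:smoothop} to $A=\1$, but that theorem requires $\g$ metrizable for the implication (iii)$\Rightarrow$(i); here we only have $\g$ barreled, so I need an argument that works with barreledness in place of metrizability. The key point: for each $x\in\g$ the operator $\dd\pi(x)=\oline{\dd\pi}(x)$ has domain all of $\cH$ (since $\cH=\cH^\infty\subeq\cD^1$), hence by the closed graph theorem $\dd\pi(x)\in B(\cH)$, and it is skew-adjoint, so $\|\dd\pi(x)\|=\|i\dd\pi(x)\|$. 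Thus we get a linear map $\g\to B(\cH)$, $x\mapsto\dd\pi(x)$. Now I would show this map is continuous using barreledness: for each pair of smooth vectors $v,w$ (which is all of $\cH$), the map $x\mapsto\la\dd\pi(x)v,w\ra$ is continuous, being the first derivative at $\1$ of the continuous orbit-map matrix coefficient; hence $\{\dd\pi(x):\|x\|\le\text{(some bounded set)}\}$ is weakly bounded in $B(\cH)$, hence (uniform boundedness, or directly) the family $\{\dd\pi(x)\}$ over a bounded set of $\g$ is pointwise bounded on $\cH$; by Banach--Steinhaus it is norm-bounded, so the linear map $x\mapsto\dd\pi(x)$ sends bounded sets to bounded sets. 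Since $\g$ is barreled, a linear map from $\g$ to a Banach space that is bounded on bounded sets — equivalently, here, one whose restriction to each bounded set is bounded, giving that the absolutely convex set $\{x:\|\dd\pi(x)\|\le1\}$ absorbs all bounded sets and is closed (closedness from weak continuity of coefficients), i.e.\ it is a barrel — is a neighbourhood of $0$; that makes $x\mapsto\dd\pi(x)$ continuous. Then $\dd\pi\colon\g\to B(\cH)$ is a continuous Lie algebra homomorphism into the Banach--Lie algebra $B(\cH)$, and since $\pi(\exp tx)=e^{t\,\dd\pi(x)}$ (the exponential of $G$ is carried to the operator exponential on the one-parameter groups), $\pi$ is smooth as a map into $\U(\cH)$ with the norm topology: indeed $\pi\circ\exp = \exp_{B(\cH)}\circ\,\dd\pi$ on a neighbourhood where $\exp$ is a chart, or more carefully one uses that $g\mapsto\pi(g)$ has continuous, hence smooth, local expression through $\dd\pi$ and the group structure. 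So $\pi$ is a morphism of Lie groups into the normed unitary group.

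\textbf{The converse.} If $\pi\colon G\to\U(\cH)$ is a morphism of Lie groups for the norm topology, then for each $v\in\cH$ the orbit map $g\mapsto\pi(g)v$ is the composition of the smooth map $\pi$ with the bounded (hence smooth) linear map $B(\cH)\to\cH$, $T\mapsto Tv$; thus $v\in\cH^\infty$. Hence $\cH=\cH^\infty$, completing the equivalence. This direction needs no barreledness hypothesis.

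\textbf{Main obstacle.} The delicate step is getting norm-continuity of $x\mapsto\dd\pi(x)$ out of the bare hypothesis $\cH=\cH^\infty$ together with ``$\g$ barreled'' — in particular verifying that the relevant set in $\g$ really is a barrel (closed, absorbent, absolutely convex) so that the defining property of a barreled space applies. The closedness comes from weak continuity of the coefficients $x\mapsto\la\dd\pi(x)v,w\ra$; absorbency comes from a Banach--Steinhaus argument on $B(\cH)$ applied over bounded subsets of $\g$, using that the scalar functions $x\mapsto\la\dd\pi(x)v,w\ra$ are continuous (being derivatives of continuous matrix coefficients of the smooth representation, cf.\ \eqref{dalphaw2}) and hence bounded on bounded sets. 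Once continuity of $\dd\pi$ is in hand, that $\pi$ is a Lie group morphism into the norm-$\U(\cH)$ follows from general facts about exponential functions and the fact that $B(\cH)$ is a Banach--Lie algebra, which is routine.
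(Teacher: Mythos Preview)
Your first equivalence and the converse direction are correct and match the paper. For the substantive forward implication your overall strategy---Closed Graph Theorem to get each $\dd\pi(x)$ bounded, then barreledness to upgrade to a uniform bound---is exactly the paper's strategy, though the paper phrases the uniform bound as equicontinuity of the momentum set $I_\pi\subeq\g'$ rather than as continuity of $x\mapsto\dd\pi(x)$. These are equivalent for skew-adjoint operators.

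Two issues in your execution. The minor one: your barrel argument is over-engineered. Absorbency of $\{x:\|\dd\pi(x)\|\le 1\}$ is immediate from each $\dd\pi(x)$ being bounded; you do not need Banach--Steinhaus or ``absorbs all bounded sets'' (that would be bornivorous, relevant for \emph{bornological} spaces, not barreled ones). Closedness follows, as you say, from continuity of each $x\mapsto\la\dd\pi(x)v,w\ra$, which holds because $v\in\cH^\infty$ makes $\omega_v^1$ continuous. So the set is a barrel and hence a $0$-neighbourhood; done.

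The genuine gap is your final step, passing from continuity of $\dd\pi\colon\g\to B(\cH)$ to smoothness of $\pi\colon G\to\U(\cH)_{\rm norm}$. You invoke $\pi\circ\exp=\exp_{B(\cH)}\circ\,\dd\pi$ and want to use that $\exp_G$ is a chart, but the paper only assumes $G$ has an exponential function, not that $G$ is locally exponential, so this fails in general. The paper closes this gap by citing Theorem~3.1 in \cite{Ne09}. A self-contained alternative available within this paper: once $\dd\pi$ is continuous, the $n$-linear maps $F(x_1,\ldots,x_n)=\dd\pi(x_1)\cdots\dd\pi(x_n)$ are continuous $\g^n\to B(\cH)$ directly (as compositions of continuous linear maps with the bounded bilinear multiplication on $B(\cH)$), so the proof of (iii)$\Rightarrow$(i) in Theorem~\ref{thm:smoothop} applied to $A=\1$ runs without change---metrizability was used there only to obtain continuity of $F$ via Proposition~5.1 of \cite{Ne10b}, which you now have for free.
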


\begin{proof} The first assertion is obvious. 
Suppose that $\g$ is a barreled space and $\cH^\infty = \cH$. 
Then $\pi$ is a smooth representation and 
the closed operators $\oline{\dd\pi}(x)$ are everywhere defined, hence bounded 
by the Closed Graph Theorem. As a consequence, the momentum set 
$I_\pi \subeq \g'$ (cf.\ Definition~\ref{def:sembo}) 
is weak-$*$-bounded. If $\g$ is barreled, 
then this implies that $I_\pi$ is equicontinuous, so that 
Theorem~3.1 in \cite{Ne09} shows that $\pi$ is a morphism of Lie groups. 
The converse is clear. 
\end{proof}

%\begin{remark} If $\pi$ is semibounded, then $s_\pi$ is convex positively homogeneous and 
%lower semicontinuous. Therefore the cone $W_\pi$ is dense in 
%$B_\pi:=\{x\in \g : s_\pi(x)<\infty\}$ and $W_\pi=B_\pi^0$ 
%follows from the continuity of $s_\pi$ on $B_\pi^0$ (\cite[Proposition~6.8]{Ne08}).
%\end{remark}

\begin{theorem}[Zellner's Smooth Vector Theorem] 
\label{thmsbsmoothonegen} %\label{thm:sembo} 
Let $\pi:G\rightarrow\U(\cH)$ be a semibounded unitary 
representation and $x_0\in W_\pi$. Then we have $\cH^\infty=\cD^\infty(\overline{\dd\pi}(x_0))$. Moreover, for every continuous seminorm $p$ on $\g$ such that 
\begin{equation}
  \label{eq:ps}
c:=\sup\{s_\pi(x_0+x) : x\in\g, p(x)\leq1\} <\infty 
\end{equation}
and $N:=\frac{1}{i}\overline{\dd\pi}(x_0)+(c+1)\cdot\1$ we have
\begin{align}\label{conjthmestimate}
\|N^k\dd\pi(x)v\|\leq p_k(x)\|N^{k+1}v\| \quad \mbox{ for } \quad 
x\in\g,v\in\cH^\infty,k\in \N_0.
\end{align}
Here $(p_k)_{k \in \N_0}$ is a sequence of continuous seminorms on $\g$ 
defined recursively by 
\[ p_0(x):=p(x)+\frac{1}{2}p([x,x_0]) \quad \mbox{ and } \quad 
p_{k+1}(x):=p_k(x)+p_k([x_0,x]).\] 
\end{theorem}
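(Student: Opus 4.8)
The plan is to prove the quantitative estimate \eqref{conjthmestimate} first, by induction on $k$, and then to deduce the equality $\cH^\infty = \cD^\infty(\oline{\dd\pi}(x_0))$ from it together with Theorem~\ref{thm:2.1}. The inclusion $\cH^\infty \subeq \cD^\infty(\oline{\dd\pi}(x_0))$ is trivial, so the real content is the reverse inclusion, which will follow once we know that the $n$-linear maps $\omega_v^n$ on $\g^n$ are continuous for every $v \in \cD^\infty(\oline{\dd\pi}(x_0))$; the estimate \eqref{conjthmestimate} is precisely what delivers that continuity (with $\g$ metrizable, so Theorem~\ref{thm:2.1}(ii) applies). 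So the heart of the matter is \eqref{conjthmestimate}.

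For the base case $k = 0$ of \eqref{conjthmestimate} the idea is to apply the version of Nelson's Commutator Theorem recorded in Lemma~\ref{nelcommthmver0} with the self-adjoint operator $N = \frac1i\oline{\dd\pi}(x_0) + (c+1)\1 \geq \1$ and with $A := \dd\pi(x)$ (a symmetric operator, hence defining an element of $B(\cH_1,\cH_{-1})$ once we check the $\|\cdot\|_{1,-1}$-bound). The commutator $[N,A]$ equals $\frac1i[\oline{\dd\pi}(x_0),\dd\pi(x)] = -i\,\dd\pi([x_0,x])$ on $\cH^\infty$, so it is again essentially a derived operator and admits the same type of bound. The crucial point is to bound $\|\dd\pi(x)\|_{1,-1}$, i.e.\ $|\la \dd\pi(x)v, v\ra| \leq \text{const}\cdot\|N^{1/2}v\|^2$: here we use semiboundedness. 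Since $x_0 + tx \in W_\pi$ for $|t|$ small and $s_\pi$ is bounded by $c$ on $x_0 + \{p \leq 1\}$, the operator $i\oline{\dd\pi}(x_0 + x/p(x)) \leq c\1$ (homogeneity/scaling), and combining the inequalities for $x_0 \pm x/p(x)$ gives $\pm i\,\dd\pi(x) \leq p(x)\big(i\oline{\dd\pi}(x_0) + c\1\big) \leq p(x) N$, i.e.\ $|\la \dd\pi(x)v,v\ra| \leq p(x)\|N^{1/2}v\|^2$. Feeding this and the analogous commutator bound into \eqref{nctv0ineq1} yields $\|Nv\|^{-1}\|\dd\pi(x)v\| \leq p(x)^{1/2}(p(x) + p([x_0,x]))^{1/2} \leq p(x) + \frac12 p([x_0,x]) = p_0(x)$ after an elementary AM–GM estimate ($\sqrt{ab} \leq \frac{a+b}{2}$). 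This is the step I expect to be the main obstacle: getting the operator-theoretic bookkeeping right — verifying that $\dd\pi(x)$ and $[N,\dd\pi(x)]$ genuinely extend to bounded operators between the right spaces in the scale $(\cH_k)$, that $\cH^\infty$ is a core throughout, and that the semiboundedness inequality holds in the precise quantitative form needed — rather than the abstract strategy, which is clear.

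For the inductive step, assuming \eqref{conjthmestimate} for $k$, I would write $N^{k+1}\dd\pi(x)v = N^k\big(N\dd\pi(x)\big)v = N^k\big(\dd\pi(x)N + [N,\dd\pi(x)]\big)v = N^k\dd\pi(x)(Nv) + N^k\big(\tfrac1i\,[\oline{\dd\pi}(x_0),\dd\pi(x)]\big)v$. The first term is bounded by $p_k(x)\|N^{k+1}(Nv)\| = p_k(x)\|N^{k+2}v\|$ by the induction hypothesis applied to $Nv \in \cH^\infty$; for the second term, $[N,\dd\pi(x)] = -i\,\dd\pi([x_0,x])$ (up to sign), so it is $\|N^k\dd\pi([x_0,x])v\| \leq p_k([x_0,x])\|N^{k+1}v\| \leq p_k([x_0,x])\|N^{k+2}v\|$ since $N \geq \1$. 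Adding gives $\|N^{k+1}\dd\pi(x)v\| \leq \big(p_k(x) + p_k([x_0,x])\big)\|N^{k+2}v\| = p_{k+1}(x)\|N^{k+2}v\|$, which is exactly the claim at level $k+1$; one must of course check that $\dd\pi(x)$ maps $\cH^\infty$ into $\cH^\infty$ so that all these compositions are legitimate on smooth vectors, which is standard.

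Finally, to conclude $\cH^\infty = \cD^\infty(\oline{\dd\pi}(x_0))$: let $v \in \cD^\infty(\oline{\dd\pi}(x_0))$. Using that $\cH^\infty$ is dense and that $\cD^\infty(\oline{\dd\pi}(x_0))$ carries the topology given by the norms $v \mapsto \|N^k v\|$, a limiting argument with \eqref{conjthmestimate} shows that for each $n$, $v$ lies in $\cD(\oline{\dd\pi}(x_1)\cdots\oline{\dd\pi}(x_n))$ and the map $\omega_v^n(x_1,\dots,x_n) = \oline{\dd\pi}(x_1)\cdots\oline{\dd\pi}(x_n)v$ satisfies $\|\omega_v^n(x_1,\dots,x_n)\| \leq p_{n-1}(x_1) p_{n-2}(x_2)\cdots$ (iterating \eqref{conjthmestimate}) which is a continuous $n$-linear bound, hence $\omega_v^n$ is continuous. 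Since $\g$ is metrizable and $\cH$ is metrizable, Theorem~\ref{thm:2.1}(ii) — or directly part (i) together with the continuity just established — yields $v \in \cH^\infty$. This gives $\cD^\infty(\oline{\dd\pi}(x_0)) \subeq \cH^\infty$, and the reverse inclusion being obvious, the proof is complete.
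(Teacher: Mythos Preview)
Your proposal is correct and follows essentially the same route as the paper: semiboundedness gives the form bound $|\la \tfrac{1}{i}\dd\pi(x)v,v\ra|\leq p(x)\la Nv,v\ra$, Lemma~\ref{nelcommthmver0} upgrades this to $\|\dd\pi(x)v\|\leq p_0(x)\|Nv\|$, a commutator induction yields \eqref{conjthmestimate}, and Theorem~\ref{thm:2.1}(i) closes the argument. Two small points: your appeal to Theorem~\ref{thm:2.1}(ii) is misplaced since $\g$ is not assumed metrizable Baire here---your fallback to part~(i) via the explicit continuity of $\omega_v^n$ is the right move and is what the paper does; and the density of $\cH^\infty$ in $\cD^\infty(N)$ you invoke must be in the $C^\infty$-topology, which the paper obtains from the $e^{i\R N}$-invariance of $\cH^\infty$ (Lemma~4.1 in \cite{NZ13})---this is the one ingredient your sketch leaves implicit.
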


\begin{proof}
Since $x_0\in W_\pi$, there exists a continuous seminorm $p$ on $\g$ satisfying 
\eqref{eq:ps}. Then $N\geq \1$ and
\begin{equation}\label{svthmeq1}
|\langle \textstyle{\frac{1}{i}}\dd\pi(x)v,v\rangle| \leq c\|v\|^2 + \langle \textstyle{\frac{1}{i}}\dd\pi(x_0)v,v\rangle \leq \langle Nv,v\rangle 
\quad \mbox{ for } \quad 
v\in\cH^\infty, x\in \g, p(x)\leq 1. 
\end{equation}
Hence
\begin{equation}\label{svthmeq2}
|\langle \textstyle{\frac{1}{i}}\dd\pi(x)v,v\rangle| \leq p(x)\langle Nv,v\rangle 
\quad \mbox{ for } \quad 
v\in\cH^\infty, x\in \g. 
\end{equation}
Note that $\dd\pi(x)=0$ for all $x\in\g$ with $p(x)=0$ by \eqref{svthmeq1}. 
In view of Lemma~4.1 in \cite{NZ13}, 
$\cH^\infty \subset \cD^\infty(N)$ 
is dense w.r.t.\ the $C^\infty$-topology on $\cD^\infty(N)$ because it is invariant under 
the unitary one-parameter group $e^{i\R N}$. Thus 
$\overline{N^k\vert_{\cH^\infty}} \supset N^k\vert_{\cD^\infty(N)}$ for $k\in\N$. 
As $\cD^\infty(N)=\cD^\infty(N^{k/2})$ is a core for $N^{k/2}, k\in\N$, and $N\geq\1$, 
we conclude that $\cH^\infty$ is a core for $N^{k/2}$ for all $k\in\N$. In 
particular, $\cH^\infty$ is a core for $N^{1/2}$ and $N^{3/2}$. 
As in Subsection~\ref{subsec:3.1}, let $\cH_n$ denote the Hilbert completion of $\cD(N^{n/2})$ w.r.t. $\|v\|_n:=\|N^{n/2}v\|, n\in\Z$. 

Set $A_x:=\frac{1}{i}\dd\pi(x), x\in\g$. Note that $\cH_1$ is a Hilbert space with 
inner product $\langle Nx,y\rangle$ for $x,y\in\cH^\infty$. Since $\cH^\infty$ is a core for $N^{1/2}$, $\cH^\infty$ is dense in $\cH_1$ and $N^{-1}A_x:\cH_1\rightarrow \cH_1$ is a symmetric operator on $\cH_1$. From \eqref{svthmeq2} we  obtain
\begin{footnote}{Here we use that, for a (densely defined) symmetric 
operator $A \colon \cD(A) \to \cH$ on a Hilbert space $\cH$, we have 
\[ \|A\| = \sup \{ |\la Av,v\ra| \colon v \in \cD(A),\|v\| \leq 1\}.\] 
This relation is well-known if $\cD(A) = \cH$, which implies that $A$ is bounded. 
If $A \geq 0$, then it follows from the Cauchy--Schwarz inequality. 

In general, the relation ``$\geq$'' holds trivially. 
For the converse, assume the right hand side has a finite value $c$. 
Then $A + c \1 \geq 0$ is positive, hence bounded, and this implies that 
$A$ is bounded, so that its closure $\oline A$ is defined on $\cH$. 
As $\cD(A)$ is dense in $\cH$, the assertion now follows from case where 
$A$ is bounded.}  
\end{footnote}
\begin{eqnarray}
  \label{eq:svthmeq3}
\sup_{v\in\cH^\infty, \|v\|_1\leq 1}\|A_xv\|_{-1} 
&=& \sup_{v\in\cH^\infty,\|v\|_1\leq 1}\|N^{-1}A_xv\|_1 \notag \\
&=& \sup_{v\in\cH^\infty,\|v\|_1\leq 1} |\langle NN^{-1}A_xv,v\rangle| \leq p(x).
\end{eqnarray}
Therefore $A_x$ extends to a  bounded 
operator  $\hat A_x\in B(\cH_1,\cH_{-1})$ with 
$\|\hat A_x\|_{1,-1}\leq p(x)$. A priori $[N,\hat A_x]\in B(\cH_3,\cH_{-3})$. Since $[N,A_x]=\dd\pi([x,x_0])=iA_{[x,x_0]}$, 
 we obtain from \eqref{eq:svthmeq3} 
\[ \|[N,\hat A_x]v\|_{-1}\leq p([x,x_0])\|v\|_1\quad \mbox{ for all } \quad 
v\in\cH^\infty.\] 
Since $\cH^\infty\subset \cH_3$ is dense (as $\cH^\infty$ is a core for $N^{3/2}$) we 
conclude that $[N,\hat A_x](\cH_3) \subset \cH_{-1}$ and further
$[N,\hat A_x]\in B(\cH_1,\cH_{-1})$ with $\|[N,\hat A_x]\|_{1,-1} \leq p([x,x_0])$. With Lemma \ref{nelcommthmver0} we obtain that $\hat A_x\in B(\cH_2,\cH)$ and 
$$\|\hat A_x\|_{2,0}\leq p(x)^{\frac{1}{2}}(p(x)+p([x,x_0]))^{\frac{1}{2}}\leq p(x)+\textstyle{\frac{1}{2}}p([x,x_0])$$
Thus 
\begin{eqnarray}
  \label{eq:svthmineq3}
\|\textstyle{\frac{1}{i}}\dd\pi(x)v\| 
&\leq& (p(x)+\textstyle{\frac{1}{2}}p([x,x_0]))\cdot\|Nv\| \notag\\
&\leq& (p(x)+\textstyle{\frac{1}{2}}p([x,x_0]))(\|\dd\pi(x_0)v\|+(c+1)\|v\|)
\end{eqnarray}
for all $x\in \g, v\in \cH^\infty$. Since $\cH^\infty\subset 
\cD^\infty(\overline{\dd\pi}(x_0))$ is dense in the $C^\infty$-topology 
 (Lemma~4.1 in \cite{NZ13}), this estimate 
implies that the map $\dd\pi:\g\times \cH^\infty \rightarrow \cH$ extends uniquely to a continuous bilinear map 
\[ \beta:\g\times \cD^\infty(\overline{\dd\pi}(x_0))\rightarrow \cH, \]
where $\cD^\infty(\overline{\dd\pi}(x_0))$ is equipped with the $C^\infty$-topology w.r.t. $U_t:=\pi(\exp(tx_0))$. Moreover $\beta(x,v)=\overline{\dd\pi}(x)v$ (since $\overline{\dd\pi}(x)$ is closed) and $U_t\beta(x,v)=\beta(\Ad(\exp(tx_0))x,U_tv)$ for all $x\in\g, v\in\cD^\infty(\overline{\dd\pi}(x_0))$. 
%Since the action of $U_t$ on $\cD^\infty(\overline{\dd\pi}(x_0))$ 
%is smooth with respect to the $C^\infty$-topology (\cite[Cor.~4.5]{Ne10b}), 
Smoothness of the map $t\mapsto U_t\beta(x,v)$ implies that $\beta$ takes values in $\cD^\infty(\overline{\dd\pi}(x_0))$. 
Therefore $\cD^\infty(\overline{\dd\pi}(x_0))\subset \cD^\infty(\pi)$. 
Let $(p_n)_{n \in\N_0}$ be the continuous seminorm from the statement of the theorem. 
 By \eqref{eq:svthmineq3} we have $\|\overline{\dd\pi}(x)v\|\leq p_0(x)\|Nv\|$ for $x\in\g,v\in \cD^\infty(N)=\cD^\infty(\overline{\dd\pi}(x_0))$. We now show inductively that 
\[ \|N^k\overline{\dd\pi}(x)v\|\leq p_k(x)\|N^{k+1}v\| \quad \mbox{ for } \quad 
x\in\g, v\in\cD^\infty(N).\] 
Assume that this holds for some $k\in\N_0$. Then
\begin{eqnarray*}
\|N^{k+1}\overline{\dd\pi}(x)v\|
&\leq &\|N^k\overline{\dd\pi}(x)Nv\|+\|[N,N^k\overline{\dd\pi}(x)]v\| \\
&=& \|N^k\overline{\dd\pi}(x)Nv\|+\|N^k\overline{\dd\pi}([x_0,x])v\|\\
&\leq& p_k(x)\|N^{k+2}v\|+p_k([x_0,x])\|N^{k+1}v\| \leq p_{k+1}(x)\|N^{k+2}v\|
\end{eqnarray*}
for $x\in\g, v\in\cD^\infty(N)$. Since on $\cD^\infty(N)=\cD^\infty(\overline{\dd\pi}(x_0))$ the $C^\infty$-topology w.r.t.\ $N$ coincides with the $C^\infty$-topology w.r.t. $\overline{\dd\pi}(x_0)$, we conclude that the map
$$\beta:\g\times \cD^\infty(\overline{\dd\pi}(x_0))\rightarrow \cD^\infty(\overline{\dd\pi}(x_0)),\quad \beta(x,v)=\overline{\dd\pi}(x)v$$
is continuous. In particular $\g^k\rightarrow\cH,(x_1,\dots,x_k)\mapsto \overline{\dd\pi}(x_1)\cdots\overline{\dd\pi}(x_k)v$ is continuous and $k$-linear for all $k\in\N_0,v\in\cD^\infty(\overline{\dd\pi}(x_0))$. Now Theorem~\ref{thm:2.1}(i) yields $\cD^\infty(\overline{\dd\pi}(x_0))= \cH^\infty$.
\end{proof}

\begin{remark}\label{thmsbsmoothonegenrmk}
Let $\pi:G\rightarrow\U(\cH)$ be a semibounded representation and $x_0\in W_\pi$. 

(a) For $c> s_\pi(x_0)$ the set $M_c:=\{x\in\g : s_\pi(x_0\pm x) < c\}$ is convex, balanced and absorbing. We may choose the seminorm $p$ in the preceding theorem to be the Minkowski functional of $M_c$, i.e.,
\begin{align}\label{minkfunc}
p(x) = \inf\{t>0: s_\pi(x_0\pm \textstyle{\frac{x}{t}})< c\}.
\end{align}
We may also consider $M_c=\{x\in\g : s_\pi(x_0\pm x) < c \ \wedge \ x_0\pm x\in W_\pi\}$, which is open, convex and balanced, and take its Minkowski functional to be the seminorm $p$. This may be helpful to study continuity properties of $p$ if $s_\pi\vert_{W_\pi}$ is known.

(b) Let $\tau$ be the (not necessarily Hausdorff) locally convex topology on $\g$ generated by 
the seminorm $p$ in \eqref{minkfunc}. The proof of the preceding theorem shows that, for all $v\in\cH^\infty$ and $k\in \N$, the map
$$\g^k\rightarrow\cH,\quad (x_1,\dots,x_k)\mapsto \dd\pi(x_1)\cdots\dd\pi(x_k)v$$
is continuous when $\g$ is equipped with the locally convex topology generated by the maps
$$f_k: \g\rightarrow (\g,\tau), \quad x\mapsto \ad(x_0)^kx$$
for $k\in \N_0$.
\end{remark}

\begin{proposition} \label{prop:3.6}
Let $\pi:G\rightarrow\U(\cH)$ be a semibounded representation 
and $x_0\in W_\pi$. We endow $\cH^\infty=\cD^\infty(\overline{\dd\pi}(x_0))$ 
with the $C^\infty$-topology w.r.t.~$\overline{\dd\pi}(x_0)$.
Then the action $G\times\cH^\infty \rightarrow \cH^\infty,(g,v)\mapsto\pi(g)v$ is smooth.  
\end{proposition}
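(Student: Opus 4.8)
The plan is to show that $\pi$ restricts to a \emph{continuous} representation $\pi^\infty$ of $G$ on the Fr\'echet space $\cH^\infty = \cD^\infty(N)$, topologized by the seminorms $q_k(v) := \|N^k v\|$ with $N = \frac{1}{i}\overline{\dd\pi}(x_0) + (c+1)\1 \geq \1$ as in Theorem~\ref{thmsbsmoothonegen} (this $C^\infty$-topology equals the one defined by $\overline{\dd\pi}(x_0)$, as noted there); that every orbit map $\pi^v \colon G \to \cH^\infty$ is smooth; and finally that these two facts, together with the continuous bilinear ``derived action'' $\beta \colon \g \times \cH^\infty \to \cH^\infty$, $\beta(x,v) = \overline{\dd\pi}(x)v$, constructed in the proof of Theorem~\ref{thmsbsmoothonegen}, force the action $\alpha \colon G \times \cH^\infty \to \cH^\infty$, $(g,v) \mapsto \pi(g)v$, to be smooth.

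For the continuity of $\pi^\infty$ I would start from the identity $\pi(g) N \pi(g)^{-1} = \frac{1}{i}\overline{\dd\pi}(\Ad(g)x_0) + (c+1)\1 =: N_{\Ad(g)x_0}$. Since $W_\pi$ is an $\Ad(G)$-invariant cone, $\Ad(g)x_0 \in W_\pi$; since conjugation preserves spectra, $s_\pi(\Ad(g)x_0) = s_\pi(x_0)$; and since $s_\pi$ is convex and locally bounded above on $W_\pi$, it is continuous there. Hence Theorem~\ref{thmsbsmoothonegen} applies with basepoint $\Ad(g)x_0$, and the estimate \eqref{conjthmestimate} holds with $N_{\Ad(g)x_0}$ in place of $N$ and with seminorms that can be dominated, uniformly for $g$ in any fixed compact set, by a single continuous seminorm on $\g$. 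Using \eqref{conjthmestimate} in both directions (bounding $N_{\Ad(g)x_0}$ by $N$ and conversely, since $N - N_{\Ad(g)x_0} = \frac{1}{i}\overline{\dd\pi}(x_0 - \Ad(g)x_0)$ is small relative to $N$ when $g$ is near $\1$) together with the commutator estimates built into \eqref{conjthmestimate}, I would conclude that each $\pi(g)$ restricts to a topological automorphism of $\cH^\infty$ and that $\{\pi^\infty(g) \colon g \in K\}$ is equicontinuous for every compact $K \subseteq G$. Combined with the strong continuity of $\pi$ on $\cH$, the splitting $\pi(g)v - \pi(g_0)v_0 = \pi(g)(v-v_0) + \pi(g_0)\big(\pi(g_0^{-1}g) - \1\big)v_0$ then yields continuity of $\alpha$.

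Next I would prove that the orbit maps are smooth by applying Theorem~\ref{thm:2.1}(i) to the representation $(\pi^\infty, V)$ with $V := \cH^\infty$. This $V$ is integral complete (being Fr\'echet), and $V'_{C^1}$ separates its points: for $a \in \cH$ the functional $v \mapsto \la v, a\ra$ is continuous on $V$ and lies in $V'_{C^1}$, since for every $w \in V$ the function $g \mapsto \la \pi(g)w, a\ra$ is smooth ($w$ being a smooth vector for the original representation on $\cH$). For $v \in \cH^\infty$ and $x \in \g$, the curve $d(t) := \pi(\exp tx)v$ is continuous into $\cH^\infty$ by the previous step, and so is $e(t) := \pi(\exp tx)\overline{\dd\pi}(x)v$ because $\overline{\dd\pi}(x)v = \beta(x,v) \in \cH^\infty$; testing against the separating functionals $\la\cdot, a\ra$ and using integral completeness, exactly as in the proof of Lemma~\ref{lem:2.1}(a), gives $d(t) = d(0) + \int_0^t e(s)\, ds$ in $\cH^\infty$, so $d$ is $C^1$ into $\cH^\infty$ with $d' = e$. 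Iterating shows $\cD^\infty(\pi^\infty) = \cH^\infty$ and that $\omega_v^n(x_1,\ldots,x_n) = \beta(x_1, \beta(x_2, \ldots, \beta(x_n, v)\cdots))$ is continuous and $n$-linear, by iterating the continuous bilinear map $\beta$. Theorem~\ref{thm:2.1}(i) then gives $\pi^v \in C^\infty(G,\cH^\infty)$ for every $v \in \cH^\infty$.

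Finally, joint smoothness of $\alpha$ follows formally. The directional derivative of $\alpha$ at $(g,v)$ in the direction $(x,w) \in \g \times \cH^\infty$ (using the canonical trivialization $TG \cong G \times \g$) exists and equals $\pi(g)\overline{\dd\pi}(x)v + \pi(g)w = \alpha\big(g, B(x,v,w)\big)$, where $B \colon \g \times \cH^\infty \times \cH^\infty \to \cH^\infty$, $B(x,v,w) := \beta(x,v) + w$, is a sum of a continuous bilinear map and a continuous linear map, hence smooth; so, after the smooth trivialization of the tangent bundles, $\dd\alpha = \alpha \circ (\id_G \times B)$. Since $\alpha$ is continuous by the second step, $\dd\alpha$ is continuous, so $\alpha$ is $C^1$; and then an induction on $k$ — if $\alpha$ is $C^k$, composition with the smooth map $\id_G \times B$ shows $\dd\alpha$ is $C^k$, hence $\alpha$ is $C^{k+1}$ — shows $\alpha$ is smooth. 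The genuinely technical point is the first step: extracting from the single estimate \eqref{conjthmestimate} the uniform comparison of $N$ with its $\Ad(G)$-conjugates $N_{\Ad(g)x_0}$ over compact sets of $g$, on which both the local equicontinuity of $\pi^\infty$ on $\cH^\infty$ and the continuity of the orbit maps into $\cH^\infty$ rest; the rest is bookkeeping with results already established.
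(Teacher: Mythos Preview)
Your proposal is correct, but it takes a longer route than the paper's proof, and there is one minor slip in step~1.

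The paper's argument rests on a single identity that you almost write down but never exploit directly: setting $B := \overline{\dd\pi}(x_0)$ and $f_k(g,v) := B^k\pi(g)v$, one has
\[
f_k(g,v) = \pi(g)\,\dd\pi(\Ad(g^{-1})x_0)^k v.
\]
This shows at once that each $f_k \colon G \times \cH^\infty \to \cH$ is continuous: iterate the continuous bilinear $\beta$ on the right, then apply the continuous action $G \times \cH \to \cH$. Since the $C^\infty$-topology is generated by $v \mapsto \|B^k v\|$, continuity of $\alpha$ into $\cH^\infty$ follows immediately---no equicontinuity bookkeeping needed. For the derivatives, the paper observes that $f_k(g,v) = \frac{d^k}{dt^k}\big|_{t=0}\pi(\exp(tx_0)g)v$ is a $k$-th derivative of the $\cH$-valued orbit map of a smooth vector, so one further differentiation in $g$ or $v$ exists trivially and gives $\dd f_k(g,v)(g.x,w) = f_k(g,\beta(x,v)) + f_k(g,w)$. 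The bootstrap is then exactly your step~3.

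Your route works too, but with more machinery. In step~1, the comparison of $N$ with $N_{\Ad(g)x_0}$ is the conjugation identity above in disguise; expanding $(N_{\Ad(g)x_0})^k$ via iterated $\beta$ recovers it. The slip: ``strong continuity of $\pi$ on $\cH$'' does not by itself give $(\pi(h)-\1)v_0 \to 0$ in $\cH^\infty$; you need $\|N^k(\pi(h)-\1)v_0\| \to 0$, and for that you must again use the conjugation identity and continuity of $\beta$ (which you have, so the gap is easily filled). Step~2, proving orbit-map smoothness via Theorem~\ref{thm:2.1}(i), is correct but unnecessary once one works with the $\cH$-valued maps $f_k$: their partial derivatives exist directly because $v \in \cH^\infty$.

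What the paper's approach buys is economy: by pushing all computations into $\cH$, where smoothness is given, it bypasses both the equicontinuity analysis and the separate orbit-map-smoothness step. What your approach buys is a cleaner logical decomposition (continuity, then orbit smoothness, then joint smoothness) that would adapt more readily to situations where no single conjugation formula is available.
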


\begin{proof}
Set $B:=\overline{\dd\pi}(x_0)$. The $C^\infty$-topology on $\cD^\infty(B)$ is generated by the seminorms $x\mapsto \|B^kx\|, k\in\N_0$. By the proof of Theorem \ref{thmsbsmoothonegen}, the map 
$$\beta:\g\times \cD^\infty(B)\rightarrow \cD^\infty(B), \quad \beta(x,v)=\dd\pi(x)v$$
is bilinear and continuous. For $k\in\N_0$, the map
$$f_k: G\times \cD^\infty(B)\rightarrow \cH, \quad f_k(g,v)=B^k\pi(g)v=\pi(g)\dd\pi(\Ad(g^{-1})x_0)^kv,$$
is continuous since $\beta$ and the map 
$G\times\cD^\infty(B)\rightarrow\cH,(g,v)\mapsto \pi(g)v$ are continuous. As
$f_k(g,v)=\frac{d^k}{dt^k}\Big\vert_{t=0} \pi(\exp(tx_0)g)v$ and $\cD^\infty(B)=\cH^\infty$ the partial derivatives of $f_k$ exist and are given by
\[\dd f_k(g,v)(g.x,w)=B^k\pi(g)\dd\pi(x)v+B^k\pi(g)w =f_k(g,\beta(x,v))+f_k(g,w)\]
In particular they are continuous. Thus 
$f:G\times\cD^\infty(B)\rightarrow\cD^\infty(B), f(g,v)=\pi(g)v$ is $C^1$ 
and 
\[ \dd f(g,v)(g.x,w)=f(g,\beta(x,v))+f(g,w).\]
We conclude that, if $f$ is $C^k$, then $\dd f$ is also $C^k$, i.e., $f$ is $C^{k+1}$. 
By induction we see that $f$ is smooth.
\end{proof}

\section{Host algebras from smoothing operators} 
\label{sec:4}

In this section we explain how smoothing operators obtained naturally 
from a unitary representation $(\pi, \cH)$ of a metrizable Lie group 
can be used to construct host algebras. Our results are rather 
complete for the class of semibounded representations, 
for which we can build on Theorem~\ref{thmsbsmoothonegen}. 

\begin{definition} Let $G$ be a topological group. 
A {\it host algebra for $G$} is a pair 
$({\cal A}, \eta)$, where  ${\cal A}$ is a $C^*$-algebra and 
$\eta \colon G \to \U(M({\cal A}))$ is a group homomorphism 
such that: 
\begin{itemize}
\item[\rm(H1)] For each non-degenerate representation $(\pi, {\cal H})$ 
of $\cA$, the representation $\tilde\pi \circ \eta$ of $G$ is continuous. Here 
$\tilde\pi \colon M(\cA) \to B(\cH)$ denotes the canonical extension of the non-degenerate 
representation $\pi$ to the multiplier algebra.  
\item[\rm(H2)] For each complex Hilbert space 
${\cal H}$, the corresponding map 
$$ \eta^* \colon \Rep({\cal A},{\cal H}) \to \Rep(G, {\cal H}), \quad 
\pi \mapsto \tilde\pi \circ \eta $$ 
is injective. 
\end{itemize}
\end{definition}

\begin{remark} We recall that, if $\cA \subeq B(\cH)$ is a closed $*$-subalgebra, then 
its multiplier algebra is given concretely by 
\[ M(\cA) \cong \{ B \in B(\cH) \colon B \cA + \cA B \subeq \cA\}\] 
(Section~3.12 in \cite{Pe89}). 
\end{remark}

\begin{lemma} \label{lem:host1} Let $(\pi, \cH)$ be a smooth unitary representation 
of the metrizable Lie group $G$ 
and ${\cB \subeq B(\cH)}$ be a $*$-invariant subset of smoothing operators, i.e., 
$\cB \cH\subeq \cH^\infty$. Let $\cA := C^*(\pi(G)\cB \pi(G))$. Then 
\[ \pi(G) \subeq \{ C \in B(\cH) \colon C \cA + \cA C \subeq \cA \} \cong M(\cA) \] 
which leads to a homomorphism $\eta_G \colon G \to M(\cA)$. For every non-degenerate 
representation $(\rho,\cK)$ of $\cA$, the corresponding representation 
$\tilde\rho \circ \eta_G$ of $G$ is smooth. 
\end{lemma}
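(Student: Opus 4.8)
The plan is to establish the two claims separately: first that $\pi(G)$ lands inside the multiplier algebra $M(\cA)$, and then that every non-degenerate representation of $\cA$ pulls back to a smooth representation of $G$. For the first claim, I would argue that for $g \in G$ and any generator $\pi(g_1) B \pi(g_2)$ of $\cA$ (with $B \in \cB$, $g_1,g_2 \in G$) we have $\pi(g)\bigl(\pi(g_1)B\pi(g_2)\bigr) = \pi(gg_1)B\pi(g_2) \in \pi(G)\cB\pi(G)$ and likewise $\bigl(\pi(g_1)B\pi(g_2)\bigr)\pi(g) = \pi(g_1)B\pi(g_2 g) \in \pi(G)\cB\pi(G)$; since $\pi(g)$ is bounded and $\cA$ is the closed linear span (closed under products) of these generators, $\pi(g)\cA \subeq \cA$ and $\cA\pi(g)\subeq \cA$. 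By the concrete description of the multiplier algebra recalled in the Remark (Section 3.12 of \cite{Pe89}), this gives $\pi(g) \in M(\cA)$, and $g \mapsto \pi(g)$ is then a group homomorphism $\eta_G \colon G \to \U(M(\cA))$.

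For the second claim, let $(\rho,\cK)$ be a non-degenerate representation of $\cA$ with canonical extension $\tilde\rho \colon M(\cA) \to B(\cK)$, and set $\sigma := \tilde\rho \circ \eta_G$, a unitary representation of $G$ on $\cK$. The key point is that $\cA$ itself contains many smoothing operators for $\pi$, and these should map under $\rho$ to smoothing operators for $\sigma$. Concretely, for $B \in \cB$ and $g_1,g_2 \in G$ the operator $A := \pi(g_1)B\pi(g_2) \in \cA$ satisfies $A\cH \subeq \cH^\infty$ (since $B\cH\subeq\cH^\infty$ and $\cH^\infty$ is $G$-invariant), so by the Characterization Theorem \ref{thm:smoothop}, using that $G$ is metrizable, the map $\pi^A \colon G \to B(\cH)$, $g \mapsto \pi(g)A$ is smooth in the norm topology. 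I would then transport this: the identity $\sigma(g)\rho(A) = \tilde\rho(\eta_G(g))\rho(A) = \rho(\pi(g)A)$ holds (because $\eta_G(g)A \in \cA$ and $\tilde\rho$ restricts to $\rho$ on $\cA$ and is multiplicative over $M(\cA)\cdot\cA$), and since $\rho \colon \cA \to B(\cK)$ is a norm-continuous (contractive, linear) $*$-homomorphism, the composite $g \mapsto \sigma(g)\rho(A) = \rho\bigl(\pi^A(g)\bigr)$ is smooth $G \to B(\cK)$. Thus $\rho(A)$ is a smoothing operator for $\sigma$ by Theorem \ref{thm:smoothop}: $\rho(A)\cK \subeq \cK^\infty(\sigma)$.

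It then remains to see that these smoothing operators have dense total range, so that $\cK^\infty(\sigma)$ is dense and $\sigma$ is smooth. Here I would invoke non-degeneracy of $\rho$: the closed span of $\rho(\cA)\cK$ is all of $\cK$, and $\rho(\cA)\cK$ is the closed span of the sets $\rho(A)\cK$ with $A$ ranging over the generators $\pi(g_1)B\pi(g_2)$, each of which lies in $\cK^\infty(\sigma)$. Hence $\cK^\infty(\sigma)$ is dense in $\cK$. Since $\sigma$ is a unitary representation of $G$ with a dense set of smooth vectors — and, more strongly, it is generated by norm-smooth orbit maps $g\mapsto\sigma(g)\rho(A)$ which in particular are norm-continuous, so $\sigma$ is continuous — the standard fact that a continuous unitary representation with dense smooth vectors whose orbit maps through those operators are smooth is itself smooth (cf.\ the discussion around Theorem \ref{thm:2.1} and \ref{thm:smoothop}) yields that $\sigma = \tilde\rho\circ\eta_G$ is smooth. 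The main obstacle I anticipate is the bookkeeping around the canonical extension $\tilde\rho$: one must check carefully that $\tilde\rho(\eta_G(g))\rho(A) = \rho(\pi(g)A)$ and, for the smoothness conclusion, that norm-smoothness of $g\mapsto\rho(\pi^A(g))$ together with density of the ranges $\rho(A)\cK$ genuinely forces $\sigma$ to be smooth rather than merely continuous — this is exactly where one feeds back into the Characterization Theorem applied now to $(\sigma,\cK)$, using that $G$ is metrizable, with the collection $\{\rho(A)\}$ playing the role of an approximate-identity-like family of smoothing operators.
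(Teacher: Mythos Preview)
Your proposal is correct and follows essentially the same route as the paper: the paper phrases the second step as showing that the left multiplier action of $G$ on $\cA$ has a dense subspace of smooth vectors (namely the span of the products $\cC^n$, where $\cC=\pi(G)\cB\pi(G)$, all of which consist of smoothing operators), and then simply asserts that smoothness of $\tilde\rho\circ\eta_G$ follows---which is exactly what you spell out via the identity $\sigma(g)\rho(A)=\rho(\pi(g)A)$ and the norm-continuity of~$\rho$. Your anticipated obstacle is not one: a unitary representation is called \emph{smooth} precisely when its space of smooth vectors is dense, so once you have $\rho(A)\cK\subeq\cK^\infty(\sigma)$ for each generator $A\in\cC$ and observe that these ranges span a dense subspace (indeed $\rho(\cC^n)\cK\subeq\rho(\cC)\cK$ for all $n\geq 1$, and then non-degeneracy gives $\overline{\rho(\cA)\cK}=\cK$), the conclusion is immediate.
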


\begin{proof} Let $\cC := \pi(G)\cB \pi(G)$, so that 
$\cC^n = \pi(G) (\cB \pi(G))^n =  (\pi(G) \cB)^n \pi(G)$. 
Since all the sets $\cC^n$ are $*$-invariant, $\cA$ is the closed span  of 
$\bigcup_{n = 1}^\infty \cC^n$. Further, $\pi(G) \cC^n + \cC^n\pi(G) \subeq \cC^n$  
implies that $\pi(G) \subeq M(\cA)$, which leads to the homomorphism 
$\eta_G \colon G \to M(\cA)$. 

Next we observe that all operators in $C \in \cC^n, n > 0$ are smoothing, 
so that the maps 
\[ G \to B(\cH), \quad g \mapsto \pi(g) C, \quad C\in \cC^n \] 
are smooth by Theorem~\ref{thm:smoothop}. 
Since $\bigcup_n \cC^n$ spans a dense subspace of $\cA$, we thus obtain a dense subspace 
of smooth vectors for the left multiplier action of $G$ on $\cA$. 
In particular, this action is strongly continuous.

From this property one already derives that every non-degenerate representation 
$(\rho, \cK)$ of $\cA$ defines a smooth representation $\rho_G 
:= \tilde\rho \circ \eta_G \colon G \to \U(\cK)$. 
\end{proof}

\begin{remark} \label{rem:redux} 
The preceding lemma implies that $\cA$ satisfies the condition (H1) of a host algebra. 
To ensure that (H2) is also satisfied, we have to pick the subset $\cB$ in such a way that, 
for two representations $\rho_1, \rho_2 \colon \cA \to B(\cK)$ for which 
$\tilde\rho_1\res_{\eta_G(G)} = \tilde\rho_2\res_{\eta_G(G)}$, we have 
$\rho_1\res_\cB  = \rho_2\res_\cB$. 
\end{remark}

\begin{theorem}[Subgroup Host Algebra Theorem] \label{thm:5.11} 
Let $(\pi, \cH)$ be a unitary representation of the metrizable 
  Lie group $G$ and  $\iota_H \colon H \to G$ a morphism 
of Lie groups where $\dim H < \infty$ and $\pi_H := \pi \circ \iota_H$ 
satisfies 
\[ \cH^\infty = \cH^\infty(\pi_H). \] 
Then $\cA := C^*\big(\pi(G) \pi_H(C^\infty_c(H))\pi(G)\big)$ 
is a host algebra for a class of smooth representations 
$(\rho,\cK)$ of $G$. 
\end{theorem}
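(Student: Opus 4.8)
The plan is to deduce the theorem from Lemma~\ref{lem:host1} together with Remark~\ref{rem:redux}, using the earlier characterization of the identity $\cH^\infty = \cH^\infty(\pi_H)$ in terms of smoothing operators, and then to verify axiom (H2) by showing that each operator $\rho(\pi_H(f))$ is already determined by the unitary representation $\tilde\rho\circ\eta_G$ of $G$.

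First I would set $\cB := \pi_H(C^\infty_c(H)) \subseteq B(\cH)$. This subset is $*$-invariant, since $\pi_H(f)^* = \pi_H(f^*)$ with $f^*(h) := \overline{f(h^{-1})}\,\Delta_H(h)^{-1} \in C^\infty_c(H)$, and by the hypothesis $\cH^\infty = \cH^\infty(\pi_H)$ (equivalently, by G\aa{}rding's theorem and the Dixmier--Malliavin theorem, which give $\pi_H(C^\infty_c(H))\cH = \cH^\infty(\pi_H) = \cH^\infty$) every operator of $\cB$ is smoothing; in particular the smooth vectors are dense, so $(\pi,\cH)$ is a smooth representation. Lemma~\ref{lem:host1} then applies to $\cB$ and yields $\pi(G) \subseteq M(\cA)$ for $\cA = C^*(\pi(G)\cB\pi(G))$, the associated homomorphism $\eta_G \colon G \to M(\cA)$, and the statement that for every non-degenerate representation $(\rho,\cK)$ of $\cA$ the representation $\rho_G := \tilde\rho \circ \eta_G$ of $G$ is smooth. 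This establishes (H1) and identifies the hosted representations as a class of smooth representations of $G$.

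For (H2), by Remark~\ref{rem:redux} it suffices to show that two non-degenerate representations $\rho_1, \rho_2$ of $\cA$ with $\tilde\rho_1|_{\eta_G(G)} = \tilde\rho_2|_{\eta_G(G)}$, i.e.\ $\rho_{1,G} = \rho_{2,G}$, agree on $\cB$. The decisive point is the identity
\[ \rho(\pi_H(f)) = \int_H f(h)\,\rho_G(\iota_H(h))\, dh \qquad (f \in C^\infty_c(H)), \]
valid for any non-degenerate representation $(\rho,\cK)$ of $\cA$, the right-hand side being the bounded operator $v \mapsto \int_H f(h)\,\rho_G(\iota_H(h))v\, dh$ on $\cK$, which visibly depends only on $\rho_G$. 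To prove it I would use that, by Lemma~\ref{lem:host1}, the left multiplication action of $G$ on $\cA$ is norm-continuous: hence for each $a \in \cA$ the map $h \mapsto \eta_G(\iota_H(h))\,a = \pi(\iota_H(h))\,a$ is a norm-continuous, compactly supported $\cA$-valued function, so the Bochner integral $\int_H f(h)\,\eta_G(\iota_H(h))\,a\, dh$ exists in $\cA$, and evaluating on vectors of $\cH$ shows that it equals $\pi_H(f)\,a$ (note $\pi_H(f) \in \cB \subseteq \cA$). Applying the norm-continuous homomorphism $\rho$ and using $\tilde\rho(\eta_G(g))\rho(a) = \rho(\eta_G(g)a)$ gives $\rho(\pi_H(f))\rho(a) = \int_H f(h)\,\rho_G(\iota_H(h))\rho(a)\, dh$; since $\rho(\cA)\cK$ is dense and both sides are bounded operators in $a$, the displayed identity follows. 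In particular $\rho_1(\pi_H(f)) = \rho_2(\pi_H(f))$ for all $f$, hence $\rho_1 = \rho_2$ by Remark~\ref{rem:redux}, which is (H2).

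The technical heart of the argument — and essentially the only place where finite-dimensionality of $H$ and metrizability of $G$ genuinely enter — is the claim that $\pi_H(f)a = \int_H f(h)\,\eta_G(\iota_H(h))a\,dh$ converges \emph{in the norm of $\cA$}: this rests on norm-continuity of the orbit maps $g \mapsto \eta_G(g)a$, which is exactly what Lemma~\ref{lem:host1} extracts from the Characterization Theorem~\ref{thm:smoothop} for smoothing operators. Beyond that, the proof is bookkeeping with the multiplier algebra (that $\pi_H(C^\infty_c(H)) \subseteq \cA$, that $\tilde\rho$ intertwines multiplication by $\eta_G(g)$, and non-degeneracy of $\rho$), and no new estimate is required.
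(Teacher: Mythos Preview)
Your proof is correct and follows essentially the same route as the paper's: apply Lemma~\ref{lem:host1} with $\cB = \pi_H(C^\infty_c(H))$ to get (H1), and then verify (H2) by establishing the integral formula $\rho(\pi_H(f)) = \int_H f(h)\,\rho_G(\iota_H(h))\, dh$. The only difference is cosmetic: the paper derives this formula by first observing that $\cB\cA$ is dense (so that $\rho\!\circ\!\pi_H$ is a non-degenerate representation of the convolution algebra $C^\infty_c(H)$) and then invoking the standard correspondence between such representations and unitary representations of $H$, whereas you compute $\pi_H(f)a$ directly as a Bochner integral in $\cA$ and push it through $\rho$; both arguments rest on the same norm-continuity of the multiplier action supplied by Lemma~\ref{lem:host1}.
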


\begin{proof} Our assumption implies that the operators 
$\pi_H(f)$, $f \in C^\infty_c(H)$, are smoothing 
(Example~\ref{ex:3.1}(a)). 
We now apply Lemma~\ref{lem:host1} with 
$\cB := \pi_H(C^\infty_c(H))$. 

For a non-degenerate representation $(\rho,\cK)$ of $\cA$, we write 
$\rho_G := \tilde\rho \circ \eta_G$ for the corresponding smooth unitary 
representation of $G$. Since the multiplier action of $H$ on $\cA$ is continuous, 
$\cB\cA$ is dense in $\cA$, so that the representation 
$\rho\res_{\cB}$ is non-degenerate. Next we note that, for $h \in H$ and 
$f \in C^\infty_c(H)$, we have 
\[ \tilde\rho(\pi_H(h))\rho(\pi_H(f)) 
= \rho(\pi_H(h)\pi_H(f))
= \rho(\pi_H(\delta_h * f)),\] 
and this implies that $\rho_G(\iota(h)) = \tilde\rho(\pi_H(h))$ is the 
unique representation of $H$ corresponding to the non-degenerate 
representation $\rho \circ \pi_H$ of the convolution 
$*$-algebra $C^\infty_c(H)$. We conclude that 
\[ \rho(\pi_H(f)) = \int_H f(h) \rho_G(\iota(h)) \, dh \quad \mbox{ for } \quad 
f \in C^\infty_c(H).\] 
In particular, $\rho_G$ determines the representation of $\cA$ and 
thus $\pi \colon G \to  M(\cA)$ defines a host algebra of~$G$. 
\end{proof}

\begin{lemma} \label{lem:5.5} Let $\cA \subeq B(\cH)$ be a $C^*$-subalgebra and 
$T = T^*$ be a selfadjoint operator on $\cH$ bounded from above such that 
\[ e^{T} \in \cA, \quad e^{i\R T} \subeq M(\cA), \] 
and the multiplier action of $\R$ on $\cA$ defined by $U_t := e^{-itT}$ is continuous. 
Then $e^{-izT} \in \cA$ for $\Im z > 0$, and we 
have for every non-degenerate representation $(\rho,\cK)$ of $\cA$ and the 
corresponding unitary one-parameter group $\tilde\rho(U_t)  = e^{-itA}$ the relations 
\[ \rho(e^{-iz T}) = e^{-izA} \quad \mbox{ and } \quad \sup\Spec(A) \leq \sup\Spec(T).\] 
\end{lemma}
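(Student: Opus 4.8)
The plan is to exploit the fact that $e^T \in \cA$ together with the continuity of the multiplier action $U_t = e^{-itT}$ to obtain the whole holomorphic family $e^{-izT}$, $\Im z > 0$, inside $\cA$, and then to transport everything through a non-degenerate representation $\rho$ using the canonical extension $\tilde\rho \colon M(\cA) \to B(\cK)$. First I would fix a non-degenerate representation $(\rho,\cK)$ of $\cA$; since $e^{i\R T} \subeq M(\cA)$, the one-parameter group $\tilde\rho(U_t)$ is unitary, and by (H1)-type reasoning (the multiplier action of $\R$ is continuous, so $\tilde\rho \circ (t \mapsto U_t)$ is a continuous unitary one-parameter group) Stone's theorem gives a selfadjoint generator, i.e.\ $\tilde\rho(U_t) = e^{-itA}$ for a unique selfadjoint $A$.

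Next I would establish $e^{-izT} \in \cA$ for $\Im z > 0$ and the identity $\rho(e^{-izT}) = e^{-izA}$. The key device is to write, for $\Im z > 0$, the operator $e^{-izT}$ as an absolutely convergent $\cA$-valued integral against the unitaries $U_t$: using the spectral theorem for $T$ and writing $z = s + i\tau$ with $\tau > 0$, one has for bounded-above $T$ a representation of the form
\[
e^{-izT} = c_\tau \int_{\R} g_{\tau}(t)\, e^{-isT} U_{-t}\, e^{T}\, dt
\]
(or, more cleanly, one uses that $\zeta \mapsto e^{\zeta T}$ for $\Re\zeta \le 0$, together with $e^T \in \cA$, generates a holomorphic semigroup; concretely $e^{-izT} = e^{-i(z+i)T} e^{T}$ and it suffices to show $e^{-iwT} \in \cA$ for $\Im w > 1$, which follows by Fourier/Laplace-type smoothing of the continuous group $t \mapsto U_t e^{T}$). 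The point is that the resulting integrand is continuous with values in $\cA$ (because $t\mapsto U_t$ is a continuous multiplier action and $e^T \in \cA$), and the integral converges in norm since $T$ is bounded above and $\tau > 0$ makes the Gaussian/exponential weight integrable. Applying $\rho$ commutes with the norm-convergent integral, and since $\rho$ is non-degenerate its extension $\tilde\rho$ intertwines $U_t$ with $e^{-itA}$, so the same integral formula applied on the $\cK$ side yields exactly $e^{-izA}$. Thus $\rho(e^{-izT}) = e^{-izA}$, and in particular (letting $z \to i$, say along the imaginary axis, using strong continuity of both sides in $z$ for $\Im z \ge$ some $\eps>0$ and that $e^T \in \cA$) $\rho(e^{T}) = e^{-A}$.

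Finally, the spectral bound. From $\rho(e^{-izT}) = e^{-izA}$ valid for all $z$ with $\Im z > 0$, I would take $z = i\lambda$ with $\lambda > 0$ to get $\rho(e^{-\lambda T}) = e^{-\lambda A}$ — but since $T$ is bounded above, $e^{-\lambda T}$ is a legitimate positive (possibly unbounded for $\lambda<0$, but fine for $\lambda>0$) operator, and $e^{-\lambda T} \in \cA$. A $*$-representation satisfies $\|\rho(B)\| \le \|B\|$, hence $\|e^{-\lambda A}\| \le \|e^{-\lambda T}\|$, i.e.\ $e^{-\lambda \inf\Spec(A)} \le e^{-\lambda \inf\Spec(T)}$ for all $\lambda > 0$; taking logarithms and dividing by $\lambda$ gives $\inf\Spec(A) \ge \inf\Spec(T)$, equivalently $\sup\Spec(-A) \le \sup\Spec(-T)$ — and after the obvious sign bookkeeping (the lemma is stated with $T$ bounded \emph{above}, so one applies this to $-T$ or reads off directly from $\rho(e^{zT})=e^{zA}$ for $z>0$) one obtains $\sup\Spec(A) \le \sup\Spec(T)$.

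**Main obstacle.** The delicate point is justifying that $e^{-izT} \in \cA$ rather than merely in $M(\cA)$, and that the relevant integral converges \emph{in the norm of $\cA$}: this is where one genuinely uses that $e^T \in \cA$ (not just $e^{i\R T} \subeq M(\cA)$) and the \emph{continuity} of the multiplier action $t\mapsto U_t$, to write $e^{-izT}$ as a norm-convergent $\cA$-valued integral of the form $\int g(t) U_t e^T\,dt$ with an integrable kernel $g$ coming from the holomorphic-semigroup / heat-kernel smoothing in the upper half plane. Once that integral representation is in hand, commuting $\rho$ past it and reading off $e^{-izA}$ on the other side is routine, and the spectral estimate is then a one-line consequence of $\|\rho\| \le 1$.
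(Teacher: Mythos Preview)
Your overall strategy matches the paper's, but there is a genuine gap in the central step, and the execution is looser than the paper's.

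\textbf{On $e^{-izT}\in\cA$.} Your integral representation idea can be made to work, but the paper's argument is much cleaner: since $e^T\in\cA$ and $U_t\in M(\cA)$, one has $e^{-i(t+i)T}=U_t e^T\in\cA$ for all $t\in\R$; the map $z\mapsto e^{-izT}$ is holomorphic on $\C_+$ with values in $B(\cH)$, takes values in the closed subspace $\cA$ on the whole line $\Im z=1$, and hence, by analytic continuation (Hahn--Banach plus the identity theorem), takes values in $\cA$ on all of $\C_+$. Your decomposition $e^{-izT}=e^{-i(z+i)T}e^T$ has a sign slip (it should be $z-i$, not $z+i$) and leads in the wrong direction.

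\textbf{The real gap: $\rho(e^{-izT})=e^{-izA}$.} Your argument is circular. You want to write $e^{-izT}=\int g(t)\,U_t\,e^T\,dt$, apply $\rho$, and read off $\int g(t)\,e^{-itA}\rho(e^T)\,dt$. To conclude this equals $e^{-izA}$ by ``the same integral formula on the $\cK$ side'', you need $\rho(e^T)=e^A$ and $A$ bounded above---neither of which is known at this point. You then propose to recover $\rho(e^T)=e^A$ by taking $z\to i$ in the very identity you are proving. The paper closes this gap by a different mechanism: it shows that $\cL:=C^*(e^{-izT}:\Im z>0)$ satisfies $\cL\cA=\cA$ (using continuity of the multiplier action and that $\cL=C^*(U_{L^1(\R)})$), so any approximate identity of $\cL$ is one for $\cA$, hence $\rho\vert_{\cL}$ is non-degenerate. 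Then $\hat V_z:=\rho(e^{-izT})$ is a non-degenerate holomorphic semigroup on $\C_+$ with $V_t\hat V_z=\hat V_{t+z}$, and the uniqueness of holomorphic extensions of unitary one-parameter groups (Proposition~VI.3.2 in \cite{Ne00}) forces $\hat V_z=e^{-izA}$. That non-degeneracy step and the appeal to the holomorphic-semigroup uniqueness are precisely what your integral approach is missing.

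\textbf{Spectral bound.} Your idea is right but the signs are scrambled: with $z=i\lambda$, $\lambda>0$, one gets $e^{-i(i\lambda)T}=e^{\lambda T}$, not $e^{-\lambda T}$; and $e^{-\lambda T}$ is \emph{unbounded} when $T$ is bounded above. The correct one-liner is $e^{\sup\Spec(A)}=\|e^A\|=\|\rho(e^T)\|\le\|e^T\|=e^{\sup\Spec(T)}$, exactly as in the paper.
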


\begin{proof} Let $\hat U_z := e^{-izT}$ denote the holomorphic extension of the 
one-parameter group $U$ to the open upper half plane $\C_+$. 
Then $U_{\C_+} \subeq \cA$ follows from 
\[ \hat U_{i + t}  = U_t \hat U_{i} \in \cA \quad \mbox{ for } \quad t \in \R \] 
by analytic continuation. As 
$\cL := C^*(\hat U_{\C_+}) = C^*(U_{L^1(\R)})$  
(Theorem~8.2 in \cite{Ne08}) and the multiplier action of $\R$ given by 
left multiplication with $U_t$ is continuous, we have $\cA = \cL \cA$. 
It follows that any approximate identity of the $C^*$-subalgebra $\cL$ of $\cA$ 
is an approximate identity in $\cA$. 
This implies that, for every representation $(\rho, \cK)$ of $\cA$, the holomorphic 
representation 
\[ \hat V := \rho \circ \hat U \colon \C_+ \to B(\cK) \] 
 is non-degenerate. 
Then $V := \tilde\rho \circ U$ is the corresponding uniquely determined 
multiplier extension to $\R$ because 
\[ V_t \hat V_z =  \rho(U_t \hat U_z) = \rho(\hat U_{t+z})= \hat V_{t + z}.\] 
This implies that the infinitesimal generator 
$A$ of $V$ satisfies 
\[ e^{-izA} = \hat V_z = \rho(\hat U_z) = \rho(e^{-izT})\] 
(cf.\ Proposition~VI.3.2 in \cite{Ne00}). 
We further obtain from 
\[ e^{\sup \Spec A} = \|e^A\|= \|\hat V_{i}\|= \|\rho(\hat U_i)\| 
\leq \|\hat U_i\| = \|e^T\| = e^{\sup \Spec(T)} \] 
the relation $\sup\Spec(A)\leq \sup\Spec(T)$. 
\end{proof}

\begin{theorem} \label{thm:5.1} 
Let $(\pi, \cH)$ be a unitary representation of the 
metrizable Lie group~$G$. If 
\[ \cH^\infty = \cD^\infty(\oline{\dd\pi}(x_0)) \quad \mbox{ for some } \quad x_0 \in \g 
\quad \mbox{ with } \quad 
\sup\Spec(i\oline{\dd\pi}(x_0)) < \infty, \] 
then $\cA := C^*\big(\pi(G) e^{i\oline{\dd\pi}(x_0)} \pi(G)\big)$ 
is a host algebra for a class of smooth representations 
$(\rho_G,\cK)$ of $G$ satisfying 
\begin{equation}
  \label{eq:spec-esti}
\sup\Spec(i\oline{\dd\rho_G}(x_0)) \leq \sup\Spec(i\oline{\dd\pi}(x_0)).
\end{equation}
\end{theorem}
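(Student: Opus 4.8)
The plan is to apply Theorem~\ref{thm:smoothop} to the single smoothing operator $A := e^{i\oline{\dd\pi}(x_0)}$ and then feed the result into Lemma~\ref{lem:host1} and Lemma~\ref{lem:5.5}. First I would check that $A$ is indeed a smoothing operator: by the spectral theorem for the self-adjoint operator $T := i\oline{\dd\pi}(x_0)$, which is bounded from above, the range of $e^{iT} = e^{-\oline{\dd\pi}(x_0)\cdot(-1)}$... more carefully, $A = e^{iT}$ where $T$ has spectrum bounded above; writing $T = \int \lambda\, dP(\lambda)$ with $\spec(T) \subeq (-\infty, s]$, the operator $A$ maps $\cH$ into $\bigcap_n \cD(T^n)$ because for large $|\lambda|$ (i.e. $\lambda \to -\infty$) the factor $e^{i\lambda}$ is merely bounded, so this does \emph{not} immediately work. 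The correct observation is the one already recorded in Example~\ref{ex:3.1}(b) and the surrounding propositions: the hypothesis $\cH^\infty = \cD^\infty(\oline{\dd\pi}(x_0))$ together with $\sup\Spec(i\oline{\dd\pi}(x_0)) < \infty$ gives that $\bigcup_{t>0} e^{itT}\cH = \cD^\omega(\oline{\dd\pi}(x_0)) \subeq \cD^\infty(\oline{\dd\pi}(x_0)) = \cH^\infty$, so every $e^{itT}$ with $t > 0$ — in particular $A = e^{iT}$ — is smoothing. (Here one uses that $T$ bounded above makes $\bigcup_{t>0}e^{itT}\cH$ equal to the space of analytic vectors for $\oline{\dd\pi}(x_0)$, as in the Proposition preceding Example~\ref{ex:holext}.)

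Next I would invoke Lemma~\ref{lem:host1} with $\cB := \{A, A^*\} = \{e^{iT}, e^{-iT^*}\}$; since $T = T^*$ we have $A^* = e^{-iT}$, which is the holomorphic extension $\hat U_z$ evaluated at $z = -i$... wait, $A^* = \overline{A} $ is not smoothing in general. Let me reconsider: $\cB$ must be $*$-invariant, so I take $\cB := \{A\} \cup \{A^*\}$, but $A^* = e^{-iT}$ and $e^{-iT}\cH = e^{-it T}\cH|_{t=1}$ lies in the analytic vectors for $-\oline{\dd\pi}(x_0) = \oline{\dd\pi}(-x_0)$ only if $-x_0$ also has $i\oline{\dd\pi}(-x_0) = -i\oline{\dd\pi}(x_0)$ bounded above, which need not hold. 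The right fix — and the way the companion Corollary~\ref{cor:4.9} is phrased — is that the $C^*$-algebra $\cA = C^*(\pi(G)e^{i\oline{\dd\pi}(x_0)}\pi(G))$ is automatically $*$-invariant because $(\pi(g)e^{iT}\pi(h))^* = \pi(h)^{-1}e^{-iT}\pi(g)^{-1}$, and $e^{-iT} = (e^{iT})^*$ is a bounded operator, so the generating set is closed under adjoints as a set of \emph{products}; but to apply Lemma~\ref{lem:host1} verbatim I only need $\cB\cH \subeq \cH^\infty$ for $\cB = \{e^{iT}\}$ together with the observation that $C^*(\pi(G)\cB\pi(G)) = C^*(\pi(G)(\cB\cup\cB^*)\pi(G))$; the latter follows since the $C^*$-algebra generated is $*$-closed by construction. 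So Lemma~\ref{lem:host1} applies, giving $\pi(G) \subeq M(\cA)$, a homomorphism $\eta_G \colon G \to \U(M(\cA))$, and (H1): every non-degenerate representation $(\rho,\cK)$ of $\cA$ yields a smooth representation $\rho_G := \tilde\rho \circ \eta_G$ of $G$.

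For (H2) I would argue via Remark~\ref{rem:redux}: I must show that $\rho_G$ determines $\rho$ on $\cA$, and for this it suffices to recover $\rho(e^{iT})$ from $\rho_G$. This is exactly Lemma~\ref{lem:5.5} applied with this $\cA \subeq B(\cH)$, the operator $T$ as above (self-adjoint, bounded from above), $U_t := e^{-itT} = \pi(\exp(-t x_0))$... careful with signs: $U_t := e^{-itT}$ where $T = i\oline{\dd\pi}(x_0)$ gives $U_t = e^{t\oline{\dd\pi}(x_0)} = \pi(\exp t x_0)$, which lies in $\U(M(\cA))$ via $\eta_G$, and the multiplier action is continuous by the strong continuity established in Lemma~\ref{lem:host1}; moreover $e^T = e^{iT \cdot (-i)}$... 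I need $e^{T'} \in \cA$ for $T' = T$, i.e. $e^{i\oline{\dd\pi}(x_0)} \in \cA$, which holds by construction. Thus Lemma~\ref{lem:5.5} gives $\rho(e^{-izT}) = e^{-izA'}$ for the generator $A'$ of $\tilde\rho(U)$, where $A' = i\oline{\dd\rho_G}(x_0)$, together with the spectral bound $\sup\Spec(A') \leq \sup\Spec(T)$, which is precisely \eqref{eq:spec-esti}. Since $\rho_G$ determines $U$ on $\cK$, hence $A'$, hence $\rho(e^{-izT})$ for all $\Im z > 0$ and in particular $\rho(e^{iT})$, and since $\pi(G)$-conjugates of $e^{iT}$ generate $\cA$ with $\rho$ already determined on $\eta_G(G) \ni \pi(g)$, the map $\eta_G^*$ is injective. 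The main obstacle is the bookkeeping around $*$-invariance of the generating set and the sign conventions relating $x_0$, $T$, and the one-parameter group, together with verifying the continuity hypothesis of Lemma~\ref{lem:5.5}; once those are pinned down the result is an assembly of the three cited lemmas.
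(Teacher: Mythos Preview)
Your approach is essentially the paper's: show $e^{i\oline{\dd\pi}(x_0)}$ is smoothing, apply Lemma~\ref{lem:host1} with the singleton $\cB$, then invoke Lemma~\ref{lem:5.5} for (H2) and the spectral estimate~\eqref{eq:spec-esti}.

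The one place you tie yourself in knots is the $*$-invariance of $\cB$. The operator $i\oline{\dd\pi}(x_0)$ is self-adjoint, so $A = e^{i\oline{\dd\pi}(x_0)}$ is \emph{already} self-adjoint; in your notation with $T := i\oline{\dd\pi}(x_0)$ one has $A = e^T$, not $e^{iT}$. Hence $\cB = \{A\}$ is $*$-invariant on the nose, and the entire detour through $A^*$, $e^{-iT}$, and the workaround ``$C^*(\pi(G)\cB\pi(G)) = C^*(\pi(G)(\cB\cup\cB^*)\pi(G))$'' is unnecessary. (That workaround would in fact not rescue Lemma~\ref{lem:host1} verbatim, since the lemma's proof uses $*$-invariance of $\cB$ to ensure every element of the dense spanning set $\bigcup_n \cC^n$ is smoothing; but the issue is moot here.) The paper also reaches ``$A$ is smoothing'' by a slightly different route---via Example~\ref{ex:3.1}(b) and the observation that the functions $t \mapsto t^n e^t$ vanish at $-\infty$ on $\Spec(T)$, making $A$ a smooth vector for the $\R$-action---rather than through the analytic-vector identity $\bigcup_{t>0} e^{it\oline{\dd\pi}(x_0)}\cH = \cD^\omega(\oline{\dd\pi}(x_0))$; both arguments are valid.
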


\begin{proof} Our assumption implies that the operator 
$B := e^{i\oline{\dd\pi}(x_0)}$ is bounded. Let 
\[ m := \log\|B\| = \sup\Spec(i\oline{\dd\pi}(x_0))\] 
and $\pi_{x_0}(t) := \pi(\exp tx_0)$. Then the unitary representation 
$\pi_{x_0}$ integrates to a representation of $C^*(\R) \cong C_0(\R)$ which factors 
through the restriction to $C_0(]-\infty,m])$ and 
$B$ is the image of the function $e^t$ under this map. 
Since all functions $t^n e^t$, $n \in \N_0$, vanish at infinity on $]-\infty,m]$, 
the operator $B$ is a smooth vector for the left multiplication action of 
$\R$ on $B(\cH)$ given by $\pi_{x_0}$ (Example~\ref{ex:3.1}(b)). 
Therefore $B$ is smoothing for $\pi_{x_0}$, and since $\cH^\infty  
= \cD^\infty(\oline{\dd\pi}(x_0)) = \cH^\infty(\pi_{x_0})$, 
it is smoothing for $\pi$. We now apply Lemma~\ref{lem:host1} with the one-element set 
$\cB := \{e^{i\oline{\dd\pi}(x_0)}\}$. 

For a non-degenerate representation $(\rho,\cK)$ of $\cA$, we write 
$\rho_G := \tilde\rho \circ \eta_G$ for the corresponding smooth unitary 
representation of $G$. Then Lemma~\ref{lem:5.5} 
implies that 
\[ e^{i\oline{\dd\rho_G}(x_0)} = \rho(e^{i\oline{\dd\pi}(x_0)}).\] 
This formula shows that  $\rho$ is uniquely determined by $\rho_G$, and therefore 
$\cA$ is a host algebra for $G$. 
We further obtain \eqref{eq:spec-esti} from Lemma~\ref{lem:5.5}. 
\end{proof}

\begin{theorem} \label{thm:5.2} 
Let $(\pi, \cH)$ be a semibounded unitary representation of the metrizable 
Lie group $G$, 
then $\cA := C^*\big(\pi(G) e^{i\oline{\dd\pi}(W_\pi)} \pi(G)\big)$ 
is a host algebra for a class of smooth representations 
$(\rho_G,\cK)$ of $G$ satisfying 
\begin{equation}
  \label{eq:spec-esti2}
I_{\rho_G} \subeq I_\pi. 
\end{equation}
\end{theorem}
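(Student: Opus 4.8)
The plan is to feed the family $\cB := \{\, e^{i\oline{\dd\pi}(x_0)} \colon x_0 \in W_\pi\,\}$ into Lemma~\ref{lem:host1} and then to verify axiom (H2) by running the argument of Theorem~\ref{thm:5.1} simultaneously for all $x_0 \in W_\pi$. First I would check that $\cB$ is a $*$-invariant set of smoothing operators. For $x_0 \in W_\pi$ the operator $i\oline{\dd\pi}(x_0)$ is self-adjoint and bounded above by $s_\pi(x_0) < \infty$, so $e^{i\oline{\dd\pi}(x_0)}$ is a bounded self-adjoint operator, whence $\cB = \cB^*$; and, exactly as in the proof of Theorem~\ref{thm:5.1}, since all functions $t \mapsto t^n e^t$ vanish at infinity on $\Spec(i\oline{\dd\pi}(x_0)) \subeq\ ]-\infty, s_\pi(x_0)]$, the operator $e^{i\oline{\dd\pi}(x_0)}$ is a smooth vector for the left multiplication action of $\R$ via $\pi_{x_0}$ (Example~\ref{ex:3.1}(b)), hence smoothing for $\pi_{x_0}$; by Theorem~\ref{thmsbsmoothonegen} we have $\cH^\infty = \cD^\infty(\oline{\dd\pi}(x_0)) = \cH^\infty(\pi_{x_0})$, so it is smoothing for $\pi$. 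Lemma~\ref{lem:host1} then produces the homomorphism $\eta_G \colon G \to \U(M(\cA))$ and shows that for every non-degenerate representation $(\rho, \cK)$ of $\cA$ the representation $\rho_G := \tilde\rho \circ \eta_G$ of $G$ is smooth; this already gives (H1).

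For (H2) I would fix $x_0 \in W_\pi$ and apply Lemma~\ref{lem:5.5} with $T := i\oline{\dd\pi}(x_0)$: then $e^T = e^{i\oline{\dd\pi}(x_0)} \in \cB \subeq \cA$, the operators $e^{i\R T} = \pi(\exp(\R x_0))$ lie in $\pi(G) \subeq M(\cA)$, and the multiplier action $U_t = e^{-itT} = \pi(\exp tx_0)$ on $\cA$ is continuous by Lemma~\ref{lem:host1}. Since $\tilde\rho(U_t) = \rho_G(\exp tx_0) = e^{t\oline{\dd\rho_G}(x_0)}$, Lemma~\ref{lem:5.5} yields $\rho\big(e^{i\oline{\dd\pi}(x_0)}\big) = e^{i\oline{\dd\rho_G}(x_0)}$ together with the estimate $\sup\Spec\big(i\oline{\dd\rho_G}(x_0)\big) \leq \sup\Spec\big(i\oline{\dd\pi}(x_0)\big) = s_\pi(x_0)$. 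The first relation, combined with $\tilde\rho(\pi(g)) = \rho_G(g)$, shows that $\rho$ is uniquely determined on the generating set $\pi(G)\cB\pi(G)$ of $\cA$ by the $G$-representation $\rho_G$, which is precisely the injectivity of $\eta^*$. Hence $(\cA, \eta_G)$ is a host algebra for the class of representations of the form $\rho_G$.

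It remains to deduce $I_{\rho_G} \subeq I_\pi$ from the estimate $s_{\rho_G}(x_0) \leq s_\pi(x_0)$ for $x_0 \in W_\pi$ just obtained. Since $I_\pi$ is weak-$*$-closed and convex and $s_\pi(x) = \sup\la I_\pi, -x\ra$, the inclusion is equivalent to $\la \mu, -x\ra \leq s_\pi(x)$ for all $\mu \in I_{\rho_G}$ and all $x \in \g$. For $x \in W_\pi$ this holds because $\la\mu,-x\ra \leq s_{\rho_G}(x) \leq s_\pi(x)$. For arbitrary $x$ with $s_\pi(x) < \infty$, I would fix $y_0 \in W_\pi$; convexity of $s_\pi$ together with its local boundedness at $y_0$ forces the segment points $z_s := (1-s)y_0 + sx$ to lie in $W_\pi$ for every $s \in [0,1)$, so $\la\mu,-z_s\ra \leq s_{\rho_G}(z_s) \leq s_\pi(z_s)$; letting $s \to 1^-$ and using that $s \mapsto s_\pi(z_s)$ is a finite convex function on $[0,1]$, hence upper semicontinuous at $s=1$, while $\la\mu,-z_s\ra \to \la\mu,-x\ra$, gives $\la\mu,-x\ra \leq s_\pi(x)$; for $x$ with $s_\pi(x) = \infty$ there is nothing to prove. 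Therefore $\mu \in I_\pi$, and $I_{\rho_G} \subeq I_\pi$.

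The routine parts are the check that $\cB$ consists of smoothing operators and the bookkeeping in invoking Lemmas~\ref{lem:host1} and~\ref{lem:5.5}. The step I expect to require the most care is the last one: passing from the pointwise inequality $s_{\rho_G} \leq s_\pi$ \emph{on $W_\pi$} to the inclusion of momentum sets. Its real content is the convex-analytic observation that the effective domain of $s_\pi$ is contained in $\oline{W_\pi}$ (every point of the domain is a limit of segment points reaching into $W_\pi$ from $y_0$) and that one-dimensional restrictions of $s_\pi$ are upper semicontinuous at such boundary points, so that the half-space constraints indexed by $W_\pi$ alone already carve out $I_\pi$.
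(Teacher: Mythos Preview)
Your argument is correct and follows the same route as the paper's proof: verify that $\cB = e^{i\oline{\dd\pi}(W_\pi)}$ consists of smoothing operators via Theorem~\ref{thmsbsmoothonegen}, apply Lemma~\ref{lem:host1} for (H1), and use Lemma~\ref{lem:5.5} pointwise on $W_\pi$ both for (H2) and for the spectral estimate $s_{\rho_G}\res_{W_\pi} \leq s_\pi\res_{W_\pi}$.

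The only difference is in the last step. The paper disposes of the momentum set inclusion by citing \cite[Prop.~6.4]{Ne08}, which says directly that a semi-equicontinuous weak-$*$-closed convex set such as $I_\pi$ is already cut out by the half-spaces $\{\alpha : \alpha(x) \geq \inf\la I_\pi, x\ra\}$ with $x$ ranging only over $W_\pi = B(I_\pi)^0$; from this and the spectral estimate the inclusion $I_{\rho_G}\subeq I_\pi$ is immediate. You instead reprove the relevant content of that proposition by hand: the segment trick $z_s = (1-s)y_0 + sx$ shows that the effective domain of $s_\pi$ lies in $\oline{W_\pi}$, and one-variable convexity gives $\limsup_{s\to 1^-} s_\pi(z_s) \leq s_\pi(x)$, so the constraints from $W_\pi$ suffice. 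Your argument is self-contained and avoids the external citation; the paper's is shorter because it outsources precisely this convex-analytic step.
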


\begin{proof} First we recall from Theorem~\ref{thmsbsmoothonegen} that 
$\cH^\infty = \cD^\infty(\oline{\dd\pi}(x))$ holds for every $x \in W_\pi$. 
As in the proof of Theorem~\ref{thm:5.1}, we see that 
$\cB  := e^{i\oline{\dd\pi}(W_\pi)}$ consists of smoothing operators. Lemma~\ref{lem:5.5} yields 
\[ e^{i\oline{\dd\rho_G}(x)} = \rho(e^{i\oline{\dd\pi}(x)})\quad \mbox{ for } \quad x \in W_\pi\] 
for every non-degenerate representation $(\rho,\cK)$ of $\cA$. 
This formula shows that  $\rho$ is uniquely determined by $\rho_G$, and therefore 
$\cA$ is a host algebra for $G$. We further obtain from Lemma~\ref{lem:5.5} the relation 
\[ \sup\Spec(i\oline{\dd\rho_G}(x))  \leq \sup\Spec(i\oline{\dd\pi}(x)) 
\quad \mbox{ for } \quad x \in W_\pi.\] 
Since $B(I_\pi)^0 = W_\pi$, Proposition~6.4 in \cite{Ne08} implies that 
\[ I_\pi = \{ \alpha \in \g'\colon (\forall x \in W_\pi)\, 
\alpha(x) \geq \inf \la I_\pi, x \ra \}.\] 
As 
\begin{align*}
 \inf \la I_\pi, x \ra 
&= \inf \Spec(-i\oline{\dd\pi}(x)) 
= -\sup \Spec(i\oline{\dd\pi}(x))\\
&\leq -\sup \Spec(i\oline{\dd\rho_G}(x))
= \inf \la I_{\rho_G}, x\ra,
\end{align*}
we see that, for all $\alpha \in I_{\rho_G}$ and $x \in W_\pi$, we have 
$\alpha(x) \geq  \inf \la I_\pi, x \ra,$ 
hence $\alpha \in I_\pi$, and therefore $I_{\rho_G} \subeq I_\pi$. 
\end{proof}

\begin{corollary} \label{cor:4.9} 
Let $C \subeq \g'$ be a weak-$*$-closed $\Ad^*(G)$-invariant subset
  which is semi-equicontinuous in the sense that its support function 
\[ s_C \colon \g \to \R \cup \{\infty\}, \quad s_C(x) := \sup \la C, x \ra \] 
is bounded in the neighborhood of some point $x_0 \in \g$. Then there exists a 
host algebra $(\cA,\eta)$ of $G$ whose representations correspond to those 
semibounded unitary representations $(\pi, \cH)$ of $G$ for which 
$s_\pi \leq s_C$, i.e., $-I_\pi \subeq C$. 
\end{corollary}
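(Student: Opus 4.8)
The plan is to derive Corollary~\ref{cor:4.9} from Theorem~\ref{thm:5.2} by showing that, among all semibounded representations $(\pi,\cH)$ with $-I_\pi\subeq C$, there is a single ``universal'' one whose host algebra (built as in Theorem~\ref{thm:5.2}) hosts precisely that class. First I would fix the point $x_0$ at which $s_C$ is bounded on a neighborhood, and observe that the hypothesis ($s_C$ weak-$*$ support function of an $\Ad^*(G)$-invariant closed convex set, semi-equicontinuous at $x_0$) is exactly what is needed so that the cone $W:=B(C)^0=\{x\in\g : s_C \text{ bounded near } x\}$ is a nonempty open $\Ad(G)$-invariant convex cone containing $x_0$. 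The set of semibounded representations with $-I_\pi\subeq C$ is then the set of representations for which $s_\pi\le s_C$ on $W$; by semi-equicontinuity of $C$ this is equivalent to $s_\pi(x)\le s_C(x)$ for all $x\in\g$, using the description $I_\pi=\{\alpha:(\forall x\in W)\,\alpha(x)\ge\inf\la I_\pi,x\ra\}$ from Proposition~6.4 in \cite{Ne08} invoked in the proof of Theorem~\ref{thm:5.2}.

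Next I would construct the universal object. The natural candidate is a direct sum (or direct integral) $\pi_C:=\bigoplus_j \pi_j$ over a set of representatives $\{(\pi_j,\cH_j)\}$ of (isomorphism classes of, or a generating family of cyclic) semibounded representations satisfying $s_{\pi_j}\le s_C$; since $s_{\pi_C}=\sup_j s_{\pi_j}\le s_C$ and $W\subeq W_{\pi_C}$, this $\pi_C$ is itself semibounded with $-I_{\pi_C}\subeq C$, and moreover $-I_{\pi_C}$ equals the weak-$*$-closed $\Ad^*(G)$-invariant convex hull of $\bigcup_j(-I_{\pi_j})$, which by choosing the family large enough I can arrange to equal $C$ (or at least to have the same support function, which is all that matters). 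Then I apply Theorem~\ref{thm:5.2} to $\pi_C$ to get $\cA_C:=C^*\big(\pi_C(G)e^{i\oline{\dd\pi_C}(W_{\pi_C})}\pi_C(G)\big)$, a host algebra whose representations $(\rho_G,\cK)$ all satisfy $I_{\rho_G}\subeq I_{\pi_C}$, i.e. $-I_{\rho_G}\subeq C$, hence are semibounded with $s_{\rho_G}\le s_C$.

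It then remains to check the converse inclusion: every semibounded representation $(\rho_G,\cK)$ with $s_{\rho_G}\le s_C$ arises from a (non-degenerate) representation of $\cA_C$. Here I would use that such a $\rho_G$ is, up to unitary equivalence, a subrepresentation of $\pi_C$ (being a direct sum of cyclic semibounded representations, each of which occurs in the defining family), so the corresponding $*$-representation of $\cA_C\subeq B(\cH_C)$ obtained by compressing to the $\rho_G$-invariant subspace, after discarding the kernel and passing to the essential subspace, is non-degenerate and yields $\rho_G$ via $\eta$. One must take a little care that the compression of the generators $\pi_C(g)e^{i\oline{\dd\pi_C}(x)}\pi_C(g')$ to an invariant subspace $\cK$ gives exactly $\rho_G(g)e^{i\oline{\dd\rho_G}(x)}\rho_G(g')$ --- this is where $\cH^\infty_{\rho_G}=\cD^\infty(\oline{\dd\rho_G}(x_0))$ from Theorem~\ref{thmsbsmoothonegen} and the functional-calculus identity from Lemma~\ref{lem:5.5} are used again --- and that $x\in W_{\pi_C}$ implies $x\in W_{\rho_G}$ so the relevant operators $e^{i\oline{\dd\rho_G}(x)}$ are the right ones.

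\textbf{The main obstacle} I expect is the set-theoretic/size issue in forming $\pi_C$: one needs a \emph{set} of semibounded representations that is ``generating'' for the whole class, and one needs that the host algebra built from this one representation still hosts every semibounded representation with $s_\pi\le s_C$, including those not literally in the chosen family. The clean way around this is the standard host-algebra argument: rather than a genuine universal representation, one checks directly that for \emph{any} semibounded $(\rho_G,\cK)$ with $s_{\rho_G}\le s_C$ the operators $\rho_G(g)e^{i\oline{\dd\rho_G}(x)}\rho_G(g')$, $x\in W:=B(C)^0$, generate a $C^*$-algebra $\cA_{\rho_G}$ that is a quotient-compatible image of an abstractly-defined $\cA_C$ --- i.e. one defines $\cA_C$ as a suitable inductive limit / universal $C^*$-completion of the $*$-algebra generated by abstract symbols $U_g$, $S_x$ ($x\in W$) subject to the relations satisfied in every such $\rho_G$ (covariance $U_g S_x U_g^{-1}=S_{\Ad(g)x}$, the semigroup law $S_xS_y\cdots$, and the norm bounds $\|S_x\|\le e^{s_C(x)}$ coming from $\sup\Spec(i\oline{\dd\rho_G}(x))\le s_C(x)$). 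Verifying that this universal $C^*$-algebra is nonzero, that $G$ maps into its multiplier algebra, that (H1) and (H2) hold, and that its representations are \emph{exactly} the claimed class, is the technical heart; but modulo this packaging, each ingredient is supplied by Theorem~\ref{thmsbsmoothonegen}, Theorem~\ref{thm:5.2}, Lemma~\ref{lem:5.5}, and Proposition~6.4 in \cite{Ne08}.
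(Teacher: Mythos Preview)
Your main approach---form a universal semibounded representation $\pi_C$ as a direct sum over a generating family of cyclic semibounded representations with $s_\pi\le s_C$, then apply Theorem~\ref{thm:5.2}---is exactly what the paper does. The paper dispatches the set-theoretic obstacle you flag more simply than your proposed universal-$C^*$-algebra detour: it indexes the direct sum by the \emph{set} $\fS_C\subeq C^\infty(G,\C)$ of normalized smooth positive definite functions whose GNS representations satisfy $s_{\pi_\phi}\le s_C$, so $\pi_C:=\bigoplus_{\phi\in\fS_C}\pi_\phi$ is automatically well defined, and every semibounded $(\rho,\cK)$ with $s_\rho\le s_C$ embeds in a multiple of $\pi_C$ since it decomposes into cyclic pieces with smooth cyclic vectors.

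Your alternative route via an abstractly presented $C^*$-algebra with generators $U_g$, $S_x$ and relations would also work in principle, but it requires verifying nontriviality, the multiplier property, and (H1)--(H2) from scratch, whereas the direct-sum construction inherits all of this from Theorem~\ref{thm:5.2} for free. The positive-definite-function trick is the shortcut you were looking for.
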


\begin{proof} If $(\pi_j, \cH_j)_{j \in J}$ is a family of semibounded unitary representations 
of $G$ with $I_{\pi_j} \subeq -C$, then their direct $\pi := \oplus_{j \in J} \pi_j$ 
also is a smooth representation which satisfies 
$s_\pi \leq \sup_{j \in J} s_{\pi_j} \leq s_C$, so that we obtain 
$I_\pi \subeq -C$ (Proposition~6.4 in \cite{Ne08}). 

Now let $\fS_C \subeq C^\infty(G,\C)$ be the convex set of all smooth positive 
definite functions $\phi \colon G \to \C$ with $\phi(\1) = 1$ for which the corresponding 
GNS representation $(\pi_\phi, \cH_\phi)$ is semibounded with $s_{\pi_\phi} \leq  s_C$. 
As $\fS_C$ is a set, the direct sum representation $\pi_C := \oplus_{\phi \in \fS_C} \pi_\phi$ 
is defined. It is semibounded with $s_{\pi_C} \leq  s_C$. As every semibounded representation 
$(\rho,\cK)$ with $s_\rho \leq s_C$ is a direct sum of cyclic ones with smooth cyclic 
unit vector, it can be realized in some multiple of the representation~$\pi_C$. 

We now consider the host algebra $\cA_C := C^*\big(\pi_C(G) e^{i\oline{\dd\pi_C}(W_\pi)} \pi_C(G)\big)$ 
from Theorem~\ref{thm:5.2}. Then all representations $\rho$ of $\cA_C$ correspond to 
representations $\rho_G$ of $G$ with $s_{\rho_G} \leq s_{\pi_C}$. Conversely, 
the construction of $\cA_C$ implies that all $G$-representations 
$\pi$ with $s_\pi \leq s_C$ are of the form $\pi = \rho_G$ because this is true for 
all cyclic ones. 
\end{proof}

\subsection{Independence of $\cA$ from the elements $x_0$} 

\begin{lemma}\label{lem1} Let $H$ be a selfadjoint operator on the Hilbert space 
$\cH$ which is bounded from below, $U_t = e^{itH}$ the corresponding 
strongly continuous unitary one-parameter 
group and $A \in B(\cH)$ be a smoothing operator for~$U$. 
Then, for $\C_+=\{z\in\C : \Im z >0\}$, 
the map $f:\overline{\C_+}\to B(\cH), z\mapsto e^{z i H}A$ is continuous.
\end{lemma}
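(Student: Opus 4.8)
The plan is to write $z = t + is$ with $s \ge 0$ and factor $e^{ziH} = U_t\, e^{-sH}$, where $U_t = e^{itH}$ is the given unitary one-parameter group and $(e^{-sH})_{s \ge 0}$ is the strongly continuous contraction-type semigroup generated by $-H$. Since $H$ is bounded below, with $m := \inf \Spec(H)$, we have $\|e^{-sH}\| = e^{-sm}$, so $f$ indeed takes values in $B(\cH)$ and its norm stays bounded on every compact subset of $\overline{\C_+}$; thus $f$ is well defined. The only point at issue is \emph{norm}-continuity of $f$, and the difficulty is concentrated on the boundary line $\R = \partial\C_+$: there the damping factor $e^{-sH}$ disappears and one is left with $U_t$, for which $t \mapsto U_t$ is only strongly, not norm, continuous. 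This is exactly where the smoothing hypothesis on $A$ enters.

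The inputs I would record first. Regarding $U$ as a (necessarily smooth, since $\cD^\infty(H)$ is dense) unitary representation of $\R$ whose infinitesimal generator is $iH$, Theorem~\ref{thm:smoothop} applied to the smoothing operator $A$ gives $A\cH \subseteq \cH^\infty = \cD^\infty(H)$, that the operators $H^n A$ are bounded for all $n \in \N_0$, and that $t \mapsto U_t A$ is norm-smooth, in particular norm-continuous. By the spectral theorem, $H^n e^{-sH}$ is bounded for every $s > 0$ and $n \in \N_0$ (as $\lambda^n e^{-s\lambda}$ is bounded on $[m,\infty)$), so each $e^{-sH}$ with $s > 0$ is itself a smoothing operator for $U$. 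I would also isolate the elementary estimate: if $T \in B(\cH)$ with $T\cH \subseteq \cD(H)$ and $HT$ bounded, then integrating $\frac{d}{dr} U_r T v = i U_r H T v$ and $\frac{d}{du} e^{-uH} T v = - e^{-uH} H T v$ yields
\[ \|(U_r - \1)T\| \le |r|\,\|HT\| \qquad\text{and}\qquad \|(e^{-s_1 H} - e^{-s_2 H})T\| \le |s_1 - s_2|\Big(\sup_u \|e^{-uH}\|\Big)\|HT\|, \]
the supremum running over the interval with endpoints $s_1, s_2 \ge 0$.

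With these facts, fix $z_0 = t_0 + i s_0 \in \overline{\C_+}$ and let $z = t + is \to z_0$ inside $\overline{\C_+}$, so $t \to t_0$ and $s \to s_0$ with $s, s_0 \ge 0$. I would split
\[ e^{ziH}A - e^{z_0 i H}A = U_t\big(e^{-sH} - e^{-s_0 H}\big)A \;+\; \big(U_t - U_{t_0}\big)e^{-s_0 H}A. \]
For the first term, unitarity of $U_t$ and the second estimate above give $\|U_t(e^{-sH} - e^{-s_0 H})A\| \le |s - s_0|\big(\sup_u \|e^{-uH}\|\big)\|HA\| \to 0$, the supremum staying bounded for $s$ near $s_0$. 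For the second term, rewrite it as $(U_{t - t_0} - \1)\,U_{t_0} e^{-s_0 H} A$. If $s_0 > 0$, then $e^{-s_0 H}A$ maps $\cH$ into $\cD(H)$ with $H e^{-s_0 H}A$ bounded, and since $U_{t_0}$ commutes with $H$ the first estimate bounds the term by $|t - t_0|\,\|H e^{-s_0 H}A\| \to 0$. If $s_0 = 0$, the term equals $(U_{t - t_0} - \1)\,U_{t_0} A$, and $H U_{t_0} A = U_{t_0} H A$ is bounded, so it is $\le |t - t_0|\,\|HA\| \to 0$ (equivalently, invoke the norm-continuity of $t \mapsto U_t A$ from Theorem~\ref{thm:smoothop}). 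Hence $\|f(z) - f(z_0)\| \to 0$, proving continuity of $f$ at $z_0$. The same factorisation moreover shows, via boundedness of $H e^{-sH}$ for $s>0$, that $f$ restricts to a norm-holomorphic map on the open half-plane $\C_+$.

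The step I expect to be the main obstacle — and the reason the statement is not entirely routine — is precisely the passage to the boundary $\R = \partial\C_+$: since $t \mapsto U_t$ is not norm-continuous, one cannot simply estimate $\|U_t e^{-sH} - U_{t_0}\|$ in operator norm as $s \downarrow 0$. It is the smoothing property of $A$, through Theorem~\ref{thm:smoothop} (concretely, through the boundedness of $HA$ on top of $A\cH \subseteq \cD(H)$), that upgrades the strong continuity of $t \mapsto U_t$ to the norm-continuity of $t \mapsto U_t A$; the parallel fact that $e^{-s_0 H}$ is a smoothing operator when $s_0 > 0$ then disposes of the interior points.
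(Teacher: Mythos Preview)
Your proof is correct and rests on the same core idea as the paper's: the boundedness of $HA$ (which you extract from Theorem~\ref{thm:smoothop}) combined with a fundamental-theorem-of-calculus estimate. The paper's argument is organized slightly more economically: it first invokes holomorphy of $f$ on $\C_+$, then uses the translation identity $f(z+t)=U_t f(z)$ to reduce boundary continuity to continuity at $z_0=0$, and there handles everything with the single complex-segment integral $e^{ziH}Av-Av=\int_0^1 e^{tziH}\,ziHAv\,dt$, giving $\|f(z)-f(0)\|\le |z|\,e^{(\Im z)\max(0,-m)}\,\|HA\|$. Your real/imaginary splitting $e^{ziH}=U_t e^{-sH}$ and the two separate estimates accomplish the same thing at the cost of a little more bookkeeping, but with the minor advantage that you do not need to appeal separately to holomorphy on the interior or to the translation reduction---your argument treats all $z_0\in\overline{\C_+}$ uniformly.
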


\begin{proof} From Proposition~VI.3.2 in \cite{Ne00} we know 
that $f$ is holomorphic (therefore continuous) on $\C_+$. 
In view of $f(z + t) = U_t f(z)$ for $t \in \R$, 
it suffices to show that $f$ is continuous at $z_0=0$. For $v\in\cH$ we have
\begin{align*}
\|f(z)v-f(0)v\|&=\|e^{ziH}Av-Av\| = \|\int_0^1 e^{tziH}z HAv\, dt\| \\
&\leq |z|\int_0^1\|e^{tziH}\|\cdot\|HAv\|\, dt
\leq |z|e^{(\Im z)\max(0,\sup(\Spec -H))}\|HAv\|.
\end{align*}
Since $A$ is a smoothing operator, $HA$ is bounded 
(Lemma \ref{lem:prod})
and thus 
$f$ is continuous at $0$.
\end{proof}

\begin{proposition}
\label{hadi-prp411}
Let $\pi:G\rightarrow \U(\cH)$ be a semibounded representation of the 
metrizable Lie group~$G$, $x_0\in W_\pi$ and 
$\cA=C^*(\pi(G)e^{i\oline{\dd\pi}(x_0)}\pi(G))$. 
Then  $e^{i\oline{\dd\pi}(x)}\in \cA$ for all $x\in W_\pi$ and, in particular, 
$\cA=C^*(\pi(G)e^{i\oline{\dd\pi}(W_\pi)}\pi(G))$. 
\end{proposition}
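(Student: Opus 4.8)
The plan is to establish the first assertion — that $e^{i\oline{\dd\pi}(x)}\in\cA$ for every $x\in W_\pi$ — from which the ``in particular'' is immediate, since $\pi(G)e^{i\oline{\dd\pi}(x_0)}\pi(G)\subseteq\pi(G)e^{i\oline{\dd\pi}(W_\pi)}\pi(G)\subseteq\cA$. First I would record the facts I need about points of $W_\pi$. For $y\in W_\pi$ the operator $i\oline{\dd\pi}(y)$ is self-adjoint and bounded above, so $e^{i\oline{\dd\pi}(y)}$ is bounded and maps $\cH$ into $\cD^\infty(\oline{\dd\pi}(y))$, which equals $\cH^\infty$ by Zellner's Smooth Vector Theorem~\ref{thmsbsmoothonegen}; hence $e^{i\oline{\dd\pi}(y)}$ is a \emph{smoothing} operator, and the last assertion of the Characterization Theorem~\ref{thm:smoothop} then makes all operators $\dd\pi(D)e^{i\oline{\dd\pi}(y)}$, $D\in U(\g_\C)$, bounded. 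Since $W_\pi$ is a cone, $\epsilon x_0\in W_\pi$ for all $\epsilon>0$, and $e^{i\oline{\dd\pi}(\epsilon x_0)}=(e^{i\oline{\dd\pi}(x_0)})^\epsilon$ lies in $C^*(e^{i\oline{\dd\pi}(x_0)})\subseteq\cA$ by functional calculus for the positive operator $e^{i\oline{\dd\pi}(x_0)}\in\cA$ (the function $\lambda\mapsto\lambda^\epsilon$ being continuous on $\Spec(e^{i\oline{\dd\pi}(x_0)})$ and vanishing at $0$). Finally $\pi(G)\subseteq M(\cA)$, by Lemma~\ref{lem:host1} applied with the one-element set $\{e^{i\oline{\dd\pi}(x_0)}\}$ of smoothing operators.

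Now fix $x\in W_\pi$ and put $H:=-i\oline{\dd\pi}(x)$, a self-adjoint operator bounded below; for $\epsilon>0$ set $A_\epsilon:=e^{i\oline{\dd\pi}(\epsilon x_0)}\in\cA$, which is smoothing for $\pi$ and hence for the one-parameter group $U_t:=e^{itH}=\pi(\exp tx)$ (as $\cH^\infty\subseteq\cH^\infty(\pi_x)$). The crucial step is to show $e^{i\oline{\dd\pi}(x)}A_\epsilon\in\cA$, and for this I would invoke Lemma~\ref{lem1}: the map $\phi_\epsilon\colon\overline{\C_+}\to B(\cH)$, $\phi_\epsilon(z):=e^{ziH}A_\epsilon$, is continuous on $\overline{\C_+}$ and holomorphic on $\C_+$, while for real $z=t$ we have $\phi_\epsilon(t)=\pi(\exp tx)A_\epsilon\in M(\cA)\cA\subseteq\cA$. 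Therefore, for every continuous linear functional $\ell$ on $B(\cH)$ that vanishes on $\cA$, the scalar function $\ell\circ\phi_\epsilon$ is holomorphic on $\C_+$, continuous on $\overline{\C_+}$, and identically $0$ on $\R$; the Schwarz reflection principle forces $\ell\circ\phi_\epsilon\equiv0$, and as $\cA$ is norm-closed this yields $\phi_\epsilon(\C_+)\subseteq\cA$. Evaluating at $z=i$ gives $e^{i\oline{\dd\pi}(x)}A_\epsilon=e^{-H}A_\epsilon=\phi_\epsilon(i)\in\cA$.

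It remains to let $\epsilon\to0^+$. Since $e^{i\oline{\dd\pi}(x)}$ is smoothing, the operator $\oline{\dd\pi}(x_0)e^{i\oline{\dd\pi}(x)}$ is bounded (by the first paragraph). Writing $H_0:=i\oline{\dd\pi}(x_0)$, the elementary estimate $\|(e^{\epsilon H_0}-\1)w\|\le\epsilon\,e^{\epsilon\max(0,\,s_\pi(x_0))}\|H_0w\|$ for $w\in\cD(H_0)$, applied with $w=e^{i\oline{\dd\pi}(x)}v$, shows $\|(e^{\epsilon H_0}-\1)e^{i\oline{\dd\pi}(x)}\|\le\epsilon\,e^{\epsilon\max(0,\,s_\pi(x_0))}\|H_0e^{i\oline{\dd\pi}(x)}\|\to0$; passing to adjoints (all operators here being self-adjoint) gives $e^{i\oline{\dd\pi}(x)}e^{i\oline{\dd\pi}(\epsilon x_0)}\to e^{i\oline{\dd\pi}(x)}$ in operator norm. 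Each left-hand term lies in $\cA$ by the second paragraph, and $\cA$ is closed, so $e^{i\oline{\dd\pi}(x)}\in\cA$. This proves the proposition.

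The one genuinely delicate point, I expect, is this final passage to the limit: $e^{i\oline{\dd\pi}(\epsilon x_0)}\to\1$ only in the strong topology, so a priori $e^{i\oline{\dd\pi}(x)}e^{i\oline{\dd\pi}(\epsilon x_0)}\to e^{i\oline{\dd\pi}(x)}$ only strongly, which is worthless for membership in the norm-closed algebra $\cA$. What saves the argument is exactly the smoothing property of $e^{i\oline{\dd\pi}(x)}$, which upgrades the convergence to the norm topology — and the assertion that this smoothing property holds for \emph{all} $x\in W_\pi$, not merely for $x_0$, is precisely the content of Zellner's Theorem~\ref{thmsbsmoothonegen}, so that is where the real work sits. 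The reflection step is comparatively soft, but it uses in an essential way that $\cA$ is a closed \emph{subspace} of $B(\cH)$ (so annihilating functionals detect membership) and that Lemma~\ref{lem1} gives continuity of $\phi_\epsilon$ down to the real boundary.
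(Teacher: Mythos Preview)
Your proof is correct and follows essentially the same route as the paper's: both arguments use Lemma~\ref{lem1} together with Schwarz reflection and Hahn--Banach to show $e^{i\oline{\dd\pi}(x)}e^{i\oline{\dd\pi}(\epsilon x_0)}\in\cA$, and then invoke the smoothing property of $e^{i\oline{\dd\pi}(x)}$ (via Theorem~\ref{thmsbsmoothonegen}) to upgrade the limit $\epsilon\to0$ to norm convergence. The only cosmetic differences are that the paper obtains $e^{i\oline{\dd\pi}(\epsilon x_0)}\in\cA$ by citing Lemma~\ref{lem:5.5} rather than functional calculus, and handles the final limit by a second appeal to Lemma~\ref{lem1} rather than your explicit estimate---which is, in fact, precisely the estimate appearing in the proof of that lemma.
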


\begin{proof} 
We already know that 
$e^{ti\oline{\dd\pi}(x_0)}\in\cA$ for $t>0$ (Lemma~\ref{lem:5.5}). Let $t>0$ and set $y_0:=tx_0$. Let $\lambda \in B(\cH)'$ be a bounded linear functional w.r.t.\ the 
operator norm such that $\lambda\vert_{\cA}=0$. By Lemma~\ref{lem1} the map
\[ f:\overline{\C_+}\to \C,\quad z\mapsto \lambda(e^{z\oline{\dd\pi}(x)}e^{i\oline{\dd\pi}(y_0)})\] 
is continuous and holomorphic on $\C_+$. Moreover $f\vert_\R=0$. Thus $f=0$. 
By the Hahn--Banach Theorem we conclude $e^{i\oline{\dd\pi}(x)}e^{it\oline{\dd\pi}(x_0)}\in\cA$ for all $t>0$. It follows that
\begin{equation}
\label{hadiexpcA}
e^{it\oline{\dd\pi}(x_0)}
e^{i\oline{\dd\pi}(x)}
=\left(e^{i\oline{\dd\pi}(x)}e^{it\oline{\dd\pi}(x_0)}\right)^*\in\cA
\text{ for every }t>0.
\end{equation} 
An argument similar to the proof of Theorem~\ref{thm:5.1}  implies that 
$e^{i\dd\pi(x)}$ is a smoothing operator for the action $\pi_x(t):=\pi(\exp(tx))$ of $\R$ on $\cH$. Consequently, by Theorem~\ref{thmsbsmoothonegen} the operator $e^{i\dd\pi(x)}$ is a smoothing operator for $(\pi,\cH)$. Letting $t\to 0$ in
\eqref{hadiexpcA}, from
Lemma~\ref{lem1} it follows that 
$e^{i\oline{\dd\pi}(x)}\in\cA$.
\end{proof}

\section{Liminality of the constructed $C^*$-algebras} 
\label{sec:5}

If, for an irreducible representation $(\pi, \cH)$ of a $C^*$-algebra $\cA$, 
 the $C^*$-algebra $\pi(\cA)$ contains a non-zero compact operator, then 
$K(\cH) \subeq \pi(\cA)$ (Corollary~4.1.10 in \cite{Dix64}). If
$\pi(\cA) =K(\cH)$
for all irreducible 
representations $(\pi,\cH)$ of $\cA$, then $\cA$ is called {\it liminal}.
\begin{remark} For $\cA = C^*\big(\pi(G)\cB\pi(G)\big)$ as in Lemma~\ref{lem:host1} 
and an irreducible representation 
$(\rho,\cH)$ of $\cA$, we have $\rho(\cA) \subeq K(\cH)$ if and only if 
$\rho(\cB) \subeq K(\cH)$. 
\end{remark}

\begin{example} The host  algebras of the form 
$\cA := C^*\big(\pi(G) \pi_H(C^\infty_c(H))\pi(G)\big)$ 
from Theorem~\ref{thm:5.11} 
are liminal if, for every irreducible representation $(\rho,\cK)$ of $\cA$, the 
representation $\rho_H := \tilde\rho \circ \eta_G \circ \iota_H$ 
maps $C^\infty_c(H)$ into compact operators. This holds in the following situations: 
\begin{itemize}
\item[\rm(a)] If $H$ is a connected reductive Lie group with compact center and 
$\rho_H$ is irreducible (Theorem~6.4 in \cite{GrN14}). 
\item[\rm(b)] If $H$ is abelian and the spectral measure $P$ on $\hat H 
= \Hom(H,\T)$ 
corresponding to $\rho_H$ is a locally finite sum of point measures with finite-dimensional ranges.
For $H = \R$ and $\pi(t) = e^{itA}$, this condition is equivalent to the compactness of the
resolvent $(A + i \1)^{-1}$ (Lemma~C.3 in \cite{GrN14}).
\item[\rm(c)] If $H$ is compact and all irreducible representations of $H$ occur in $\rho_H$ with 
finite multiplicities (Proposition~C.5 in \cite{GrN14}). 
\end{itemize}
\end{example}

\begin{example} The host  algebras of the form 
$\cA := C^*\big(\pi(G) e^{i\oline{\dd\pi}(W_\pi)} \pi(G)\big)$ 
constructed from a semibounded representation in Theorem~\ref{thm:5.2} 
are liminal if for every irreducible semibounded representation $(\rho_G,\cH)$ of $G$, 
the operators $e^{i\oline{\dd\rho_G}(x)}$, $x \in W_\pi$, are compact. This is in particular 
the case if: 
\begin{itemize}
\item[\rm(a)] $G$ is finite dimensional by Theorem~X.4.10 in \cite{Ne00}. 
\item[\rm(b)] $G = \Vir$ is the Virasoro group by Proposition~4.11 in \cite{Ne10a} 
and Proposition~\ref{hadi-prp411} above. See also \cite{NS14} 
for an alternative construction of corresponding $C^*$-algebras. 
\item[\rm(c)] $G$ is a double extension of a twisted loop group 
\[ \cL_\phi(K) = \{ f \in C^\infty(\R, K)\colon f(t+2\pi) = \phi^{-1}(f(t))\},\] 
where $K$ is a semisimple compact Lie group and $\phi$ a finite order automorphism. 
In this case the generator $\bd$ of the translation action is contained in $W_\pi \cup - W_\pi$, 
and this implies that semibounded irreducible representation are positive (or negative) 
energy representations, hence highest weight representations 
(Theorems~5.4 and 6.1 in \cite{Ne14}). 
\end{itemize}
\end{example}

\section{Perspectives} 
\label{sec:6}

We conclude this paper with a discussion of several applications and open 
problems related to smoothing operators and host algebras for infinite 
dimensional Lie groups.

\subsection{Holomorphic induction} 

We have seen in Section~\ref{sec:3} that, for every semibounded 
representation $(\pi, \cH)$ with $x_0 \in W_\pi$, the selfadjoint 
operator $i\oline{\dd\pi}(x_0)$ has the same smooth vectors as $G$. 
Let $P = P[a,b]$ be a spectral projection of this operator corresponding to a 
compact interval in $\R$. Since $P(\cH)$ consists of smooth vectors for 
the one-parameter group $\pi_{x_0}$, the operator $P$ 
is a smoothing operator for $G$. Further $P = P^2 = P^*$ implies that the map 
$G^2 \to B(\cH), (g,h) \mapsto \pi(g)P \pi(h)$ 
is smooth, and in particular the map 
\[ G \to B(\cH), \quad g \mapsto \pi(g)P \pi(g)^{-1} \] 
is smooth. Geometrically, this means that the closed subspace 
$P(\cH)$ has smooth $G$-orbit map in the Gra\ss{}mannian $\Gr(\cH)$ 
of closed subspaces of $\cH$. 

If, in addition, $P(\cH)$ is $G$-cyclic, 
this can be used obtain realizations of the representation 
$(\pi,\cH)$ in spaces of smooth sections of vector bundles over 
$G/G_P$, where $G_P = \{ g \in G \colon
 \pi(g) P  = P \pi(g)\}$ 
(cf.\ \cite{NS14}, \cite{Ne10a}, \cite{Ne14}, \cite{BG14}). 

\subsection{Smoothness and Fr\'echet structure on $\cH^\infty$} 

If $E$ is a locally convex space and $F \subeq E$ a subspace, then 
the Closed Graph Theorem implies that 
$F$ carries at most one Fr\'echet topology for which the inclusion 
$F \into E$ is continuous. 

This implies in particular, that, for a smooth representation 
of a Lie group $G$, the space $\cH^\infty$ carries at most one Fr\'echet 
space topology for which the inclusion 
$\cH^\infty \into \cH$ is continuous. We call $\pi$ {\it Fr\'echet smooth} if it does. 
In Proposition~5.4 of \cite{Ne10b} we have seen that all unitary 
representations of Banach--Lie groups are Fr\'echet smooth. 
As Theorem~\ref{thmsbsmoothonegen} shows, for a semibounded 
representation $(\pi, \cH)$ and $x \in W_\pi$, we have 
$\cH^\infty = \cH^\infty(\pi_x)$. Therefore every semibounded representation 
is Fr\'echet smooth. In both cases the group  $G$ acts smoothly on $\cH^\infty$ 
with respect to the Fr\'echet topology (Theorem~4.4 in \cite{Ne10b} and 
Proposition~\ref{prop:3.6}). 

\nin{\bf Problem:} Is the $G$-action on $\cH^\infty$ smooth for 
every Fr\'echet smooth unitary representation?   

\subsection{Application to dense inclusions of Lie groups} 

The following proposition is sometimes useful to extend semibounded 
representations to larger groups. It demonstrates impressively the power 
of the methods used to prove Theorem~\ref{thmsbsmoothonegen} and that 
the extendibility of a semibounded representation is completely controlled by 
its support function. 

\begin{proposition}
Let $G \to G^\sharp$ be a smooth inclusion of Lie groups for which 
the inclusion $\g\into \g^\sharp$ has dense range and assume that $G$ is connected and $G^\sharp$ is $1$-connected. Let
$(\pi, \cH)$ be a semibounded unitary representation of $G$ for 
which $W_\pi = \g \cap C$ for an open convex cone $C$ in $\g^\sharp$ such that $s_\pi\vert_{W_\pi}$ extends to a continuous map $s_\pi^\sharp:C\to \R$.
Then $\pi$ extends to a semibounded representation $\pi^\sharp$ of $G^\sharp$.
\end{proposition}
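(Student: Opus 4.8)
The plan is to transport the quantitative estimates of Theorem~\ref{thmsbsmoothonegen} from $\g$ to $\g^\sharp$ by density, so as to obtain a Lie algebra representation of $\g^\sharp$ on $\cH^\infty$ by (essentially) skew-adjoint operators, and then to integrate it over the $1$-connected group $G^\sharp$. Concretely, fix $x_0 \in W_\pi = \g \cap C$ and apply Theorem~\ref{thmsbsmoothonegen}: then $\cH^\infty = \cD^\infty(\oline{\dd\pi}(x_0))$ and the estimates \eqref{conjthmestimate} hold with seminorms $(p_k)$ built recursively from a seminorm $p$ on $\g$ and the operator $\ad(x_0)$. The decisive point is that $p$ may be chosen so that it extends continuously to $\g^\sharp$. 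Following Remark~\ref{thmsbsmoothonegenrmk}(a) and \eqref{minkfunc}, one takes $p$ to be the Minkowski functional of $M \cap \g$, where, for $c > s_\pi^\sharp(x_0)$, $M := \{ x \in \g^\sharp : x_0 \pm x \in C,\ s_\pi^\sharp(x_0 \pm x) < c \}$. Since $s_\pi^\sharp$ is a continuous extension of $s_\pi\vert_{W_\pi}$ and $W_\pi = \g \cap C$, the set $M$ is an open, convex, balanced $0$-neighborhood of $\g^\sharp$, so its Minkowski functional $p^\sharp$ is a continuous seminorm on $\g^\sharp$ with $p^\sharp\vert_\g = p$. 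Moreover, the bracket of $\g^\sharp$ is continuous, so $\ad(x_0)$ is a continuous endomorphism of $\g^\sharp$, and it preserves $\g$ because $\g \hookrightarrow \g^\sharp$ is a morphism of Lie algebras; hence the $p_k$ extend to continuous seminorms $p_k^\sharp$ on $\g^\sharp$ with $p_k^\sharp\vert_\g = p_k$.

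Next I would exploit the estimates $\|N^k\oline{\dd\pi}(x)v\| \leq p_k^\sharp(x)\|N^{k+1}v\|$ ($x \in \g$, $v \in \cH^\infty$) together with the density of $\g$ in $\g^\sharp$: for each $v \in \cH^\infty = \cD^\infty(N)$ the map $x \mapsto \oline{\dd\pi}(x)v$ is continuous from $\g$ into $\cH^\infty$ with its $C^\infty$-topology, hence extends uniquely to a continuous linear map, and this yields a continuous bilinear map $\beta \colon \g^\sharp \times \cH^\infty \to \cH^\infty$ with $\beta\vert_{\g \times \cH^\infty} = \dd\pi$. By density $\beta$ is a homomorphism of Lie algebras whose values are skew-symmetric operators on $\cH^\infty$, and $[N,\beta(x,\cdot)] = \tfrac{1}{i}\beta([x_0,x],\cdot)$ on $\cH^\infty$. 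Using the analogue of \eqref{svthmeq2}, namely $|\langle \tfrac{1}{i}\beta(x,\cdot)v,v\rangle| \leq p^\sharp(x)\langle Nv,v\rangle$, one checks the hypotheses of Nelson's commutator theorem (Section~X.5 of \cite{RS75}; cf.\ Lemma~\ref{nelcommthmver0}) and concludes that $i\beta(x,\cdot)$ is essentially self-adjoint on $\cH^\infty$ for every $x \in \g^\sharp$; write $\oline{\dd\pi^\sharp}(x)$ for the closure of $\beta(x,\cdot)$. Approximating an arbitrary $x \in C$ by elements of $W_\pi$ and using continuity of $\beta$ and of $s_\pi^\sharp$ gives $\sup\Spec(i\oline{\dd\pi^\sharp}(x)) \leq s_\pi^\sharp(x)$ for all $x \in C$.

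It remains to integrate $\beta$ to $G^\sharp$. As in the proof of Proposition~\ref{prop:3.6}, the $\g^\sharp$-action $\beta$ on the Fréchet space $\cH^\infty$ (with the $C^\infty$-topology with respect to $N$) is smooth; by the estimates extended to $\g^\sharp$, each one-parameter group $t \mapsto e^{t\oline{\dd\pi^\sharp}(x)}$ leaves $\cH^\infty$ invariant and acts smoothly on it, and these one-parameter groups satisfy the infinitesimal relations dictated by $\beta$. Since $G^\sharp$ is $1$-connected, this data exponentiates to a smooth unitary representation $\pi^\sharp \colon G^\sharp \to \U(\cH)$ with derived representation $\oline{\dd\pi^\sharp}$. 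Restricting to $G$ recovers $\pi$, since $\pi^\sharp\vert_G$ and $\pi$ are smooth representations of the connected group $G$ with the same derived representation $\dd\pi$. Finally $\pi^\sharp$ is semibounded: $x_0$ lies in the open cone $C$ and has a neighborhood in it on which the continuous function $s_\pi^\sharp$, hence also $s_{\pi^\sharp} \leq s_\pi^\sharp$, is bounded, so that $x_0 \in W_{\pi^\sharp}$.

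I expect the last step to be the main obstacle: the estimates above produce only the infinitesimal object together with a compatible family of unitary one-parameter groups, and turning these into an honest representation of the group $G^\sharp$ — which in general need not be locally exponential — is precisely where the hypothesis that $G^\sharp$ be $1$-connected has to be used, through a global exponentiation or monodromy argument rather than through the local estimates.
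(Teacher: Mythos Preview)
Your overall strategy---extend the seminorms $p_k$ from $\g$ to $\g^\sharp$ via the Minkowski functional of $M^\sharp=\{x\in\g^\sharp: x_0\pm x\in C,\ s_\pi^\sharp(x_0\pm x)<c\}$, extend $\dd\pi$ by density to a continuous Lie algebra action $\beta^\sharp$ of $\g^\sharp$ on $\cD^\infty(N)$ by skew-symmetric operators, then integrate---is exactly the paper's. The difference is only in the step you yourself flag as the obstacle.

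The paper resolves the integration step not by assembling one-parameter groups and invoking a monodromy argument, but by a direct appeal to Toledano Laredo's integration theorem (Theorem~5.2.1 in \cite{TL99}). That theorem takes precisely the data you have produced: a representation $\alpha\colon\g^\sharp\to\End(\cD^\infty(N))$ by skew-symmetric operators on a core for a selfadjoint $N\geq\1$, satisfying the commutator identity $[N,\alpha(x)]=i\alpha([x,x_0])$ together with the norm bounds of the form \eqref{conjthmestimate}; it outputs a continuous unitary representation $\pi^\sharp$ of the $1$-connected group $G^\sharp$ whose derived action on $\cD^\infty(N)$ is $\alpha$. Your detour through Nelson's commutator theorem to establish essential self-adjointness of each $i\beta(x,\cdot)$, and the attempt to patch the resulting one-parameter groups, is therefore unnecessary (and, as you note, would not by itself yield a group representation for a non-locally-exponential $G^\sharp$). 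Likewise, your appeal to Proposition~\ref{prop:3.6} at this stage is circular: that proposition already presupposes a group representation.

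After obtaining $\pi^\sharp$ from \cite{TL99}, the paper still has to check smoothness: Toledano Laredo's theorem only gives continuity and that, for every smooth curve $\gamma$ in $G^\sharp$ and $v\in\cD^\infty(N)$, the curve $t\mapsto\pi^\sharp(\gamma(t))v$ is $C^1$. One then invokes Lemma~3.3 of \cite{Ne10b} to conclude that each $v\in\cD^\infty(N)$ is a $C^1$-vector, and Theorem~\ref{thm:2.1}(i) (using the continuity of $\beta^\sharp$) to upgrade to $\cD^\infty(N)\subseteq\cH^\infty(\pi^\sharp)$. Semiboundedness and $\pi^\sharp\vert_G=\pi$ then follow as you indicate.
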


\begin{proof} Choose $x_0\in W_\pi$ and $c_0>s_\pi(x_0)$. Then the set 
\[ M:=\{x\in\g \,:\,x_0\pm x\in W_\pi\text{ and } s_\pi(x_0\pm x)<c_0\} \] 
is open, convex, and balanced (cf.~Remark~\ref{thmsbsmoothonegenrmk}), and therefore the Minkowski functional 
\[ p(x):=\inf\left\{t>0\,:\,t^{-1} x \in M \right\} \]
satisfies the assumptions of Theorem~\ref{thmsbsmoothonegen}. Let $c$ be as in 
Theorem~\ref{thmsbsmoothonegen} and 
\[ N=-i\oline{\dd\pi}(x_0)+(c+1)\cdot\1.\] By Theorem~\ref{thmsbsmoothonegen}, $\cH^\infty=\cD^\infty(N)$ and the map
\[ \beta:\g\times \cD^\infty(N) \to \cD^\infty(N),\quad 
(x,v)\mapsto \oline{\dd\pi}(x)v \]  is continuous. Moreover
\begin{align}\label{extpropeq1}
\|N^k\beta(x,v)\|\leq p_k(x)\|N^{k+1}v\|\quad \mbox{ for all } \quad x\in \g,v\in\cH^\infty,k\in\N_0
\end{align}
where $p_k$ are the seminorms defined in Theorem~\ref{thmsbsmoothonegen}.
Next we prove that each seminorm $p_k$ extends uniquely to a continuous seminorm on $\g^\sharp$. From the definition of $p_k$ it is clear that it is enough to show that the seminorm $p$ can be extended to~$\g^\sharp$. As the extension of $p$ we can take the Minkowski functional of 
\[
M^\sharp:=
\left\{x\in\g^\sharp \,:\,x_0\pm x\in C\text{ and } s_\pi^\sharp(x_0\pm x)<c_0\right\}.
\]
In particular, $\beta$ extends (uniquely) to a continuous map 
$\beta^\sharp:\g^\sharp \times \cD^\infty(N)\to \cD^\infty(N)$ also satisfying \eqref{extpropeq1}. Since $\beta$ defines a representation of $\g$ on $\cD^\infty(N)$, by continuity $\beta^\sharp$ defines a representation 
\[ \alpha:\g^\sharp\to \End(\cD^\infty(N)), \quad \alpha(x)v=\beta^\sharp(x,v)\] 
by skew-symmetric operators. Since $[N,\alpha(x)]=i\alpha([x,x_0])$ on $\cD^\infty(N)$, 
 by \eqref{extpropeq1}, we obtain from Theorem~5.2.1 in \cite{TL99} a (unique) continuous representation $\pi^\sharp : G^\sharp \to \U(\cH)$
such that, for every $\gamma\in C^\infty(\R,G^\sharp)$ with $\gamma(0)=\1$ and 
$v\in\cD^\infty(N)$, the curve 
$\pi^{\gamma,v}(t):=\pi^\sharp(\gamma(t))v$ is $C^1$ and satisfies 
$$\frac{d}{dt}\Big\vert_{t=0} \pi^{\gamma,v}(t) = \alpha(\gamma'(0))v.$$
By Lemma~3.3 in \cite{Ne10b} we obtain that each $v\in \cD^\infty(N)$ is a $C^1$-vector for $\pi^\sharp$ and with Theorem \ref{thm:2.1}(i) we further conclude that $\cD^\infty(N)\subset \cH^\infty(\pi^\sharp)$. Thus $\pi^\sharp$ is smooth and since $s_{\pi^\sharp}\vert_C \leq s_\pi^\sharp$ it follows that $\pi^\sharp$ is semibounded. Since $\dd\pi^\sharp\vert_\g=\dd\pi$ and $G$ is connected, $\pi^\sharp\vert_G=\pi$ by 
Proposition~3.4 in \cite{Ne09}.
\end{proof}

\section*{Acknowledgments}

K.-H.~Neeb and C. Zellner acknowledge support by the DFG-grant 
NE 413/7-2 in the framework of the 
Schwerpunktprogramm ``Darstellungstheorie''. 
H. Salmasian acknowledges support by 
the NSERC Discovery Grant RGPIN-2013-355464.

\end{document}